\title{Can one condition a killed random walk to survive?}
\author[1]{Lucas Rey}
\affil[1]{CEREMADE, CNRS, UMR 7534, Université Paris-Dauphine, PSL University, 75016 Paris, France, \textit{lucas.rey@dauphine.psl.eu}}
\author[2]{Augusto Teixeira}
\affil[2]{IMPA, Estrada Dona Castorina, 110 - Rio de Janeiro, Brazil and IST, Av. Rovisco Pais, Nº 1,
Lisbon, Portugal \textit{augusto@impa.br}}
\theoremstyle{plain}
\newtheorem{Thm}{Theorem}[section]
\newtheorem{Prop}[Thm]{Proposition}
\newtheorem{Lem}[Thm]{Lemma}
\theoremstyle{definition}
\newtheorem{Def}[Thm]{Definition}
\theoremstyle{remark}
\newtheorem{Rem}[Thm]{Remark}
\newtheorem{Ex}[Thm]{Example}
\newtheorem{Qu}[Thm]{Question}
\newcommand{\NN}{\mathbb{N}}
\newcommand{\R}{\mathbb{R}}
\newcommand{\Z}{\mathbb{Z}}
\newcommand{\N}{\mathbb{N}}
\newcommand{\T}{\mathbb{T}}
\newcommand{\Proba}{\mathbb{P}}
\newcommand{\E}{\mathbb{E}}
\newcommand{\Q}{\mathbb{P}}
\newcommand{\K}{\mathbf{k}}
\newcommand{\h}{\mathbf{h}}
\newcommand{\F}{\mathcal{F}}
\newcommand{\G}{\mathcal{G}}
\newcommand{\s}{\mathbf{s}}
\newcommand{\B}{\mathbf{b}}
\newcommand{\eps}{\varepsilon}
\newcommand{\EK}{E_{\K}}
\newcommand{\RR}{\mathcal{R}}
\def\arraypar#1{\parbox[c]{\textwidth - 2cm}{\centering #1}}
\newcommand{\be}{\begin{equation}}
\newcommand{\ee}{\end{equation}}
\newcommand{\fix}[1]{{\textcolor{black}{#1}}}
\begin{document}
\mathtoolsset{showonlyrefs}
\maketitle
\begin{abstract}
    We consider the simple random walk on $\Z^d$ killed with probability $p(|x|)$ at site $x$ for a function $p$ decaying at infinity. Due to recurrence in dimension $d=2$, the killed random walk (KRW) dies almost surely if $p$ is positive, while in dimension $d \geq 3$ it is known that the KRW dies almost surely if and only if $\int_0^{\infty}rp(r)dr = \infty$, under mild technical assumptions on~$p$.
    In this paper we consider, for any $d \geq 2$, functions $p$ for which the random walk will die almost surely and we
    ask ourselves if the KRW conditioned to survive is well-defined.
    More precisely, given an
    exhaustion $(\Lambda_R)_{R \in \N}$ of $\Z^d$, does the KRW conditioned to leave $\Lambda_R$ before dying converges in distribution towards a limit which does not depend on the exhaustion? We first prove that this conditioning is well-defined for $p(r) = o(r^{-2})$, and that it is not for $p(r) = \min(1, r^{-\alpha})$ for $\alpha \in (14/9,2)$. This question is connected to branching random walks and the infinite snake.
    More precisely, in dimension $d=4$, the infinite snake is related to the KRW with $p(r) \asymp (r^2\log(r))^{-1}$, therefore our results imply that the infinite snake conditioned to avoid the origin in four dimensions is well-defined.

    \bigskip

\end{abstract}

Keywords: killed random walks, branching random walks, infinite snake.

MSC classification: 05C81, 82B41, 82C41, 60J80

\section{Introduction}
The simple random walk (SRW) on $\Z^d$ is known to be transient in dimension $d \geq 3$, recurrent in dimensions $1$ and $2$ (Polya's theorem). In dimension $d \geq 3$, the SRW conditioned to avoid the origin is clearly a well-defined object. But perhaps more surprisingly, this conditioning also makes sense at the critical dimension $d=2$ \fix{and below ($d=1$)}, that means: the SRW conditioned to avoid the origin in $\Z^\fix{d}$ can be defined as the Doob-transform of the SRW by the potential kernel $a$. It can also be defined as a weak limit.
\begin{Def}\label{def:exhaustion}
    An increasing sequence $\Lambda = (\Lambda_R)_{R \in \N}$ of finite subsets of $\Z^d$ is called an \emph{exhaustion} of $\Z^d$ if for all $x \in \Z^d$, there exists $R$ such that $x \in \Lambda_R$. 
\end{Def}
Perhaps the simplest example of an exhaustion is given by $(B(R))_{R \in \N}$, the intersections with $\Z^d$ of the \fix{E}uclidean balls of radius $R$. 
This was precisely the exhaustion used in \cite{Gantert} in order to define the SRW on $d = 2$ conditioned to avoid the origin as the limit in distribution of the SRW conditioned to escape $\Lambda_R$ before hitting the origin.
This conditioned random walk proved useful to define the random interlacement in two dimensions in \cite{2dRI}, and it is also interesting in its own right, see for example \cite{ConditionedWalk}, \cite{Gantert}.
One of the consequences of our Theorem \ref{thm.cv} below is to establish the independence of the limit with respect to the choice of $\Lambda_R$.

An important motivation for our work is to find a similar construction for the infinite snake of \cite{Curien}. Given $\mu$ a probability measure on $\NN$ with $\E[\mu]=1$, denote by $T_{\infty}$ the critical Galton-Watson tree with offspring distribution $\mu$ conditioned to survive. The \emph{infinite snake} is the simple random walk on $\Z^d$ indexed by $T_{\infty}$ (see Section \ref{sec:snake} for more details). It is shown in \cite{Curien} that this infinite snake is recurrent for $d\leq 4$ and transient \fix{for} $d\geq 5$, which is reminiscent of Polya's theorem for the SRW. Moreover, the hitting probabilities for the \fix{infinite} snake in dimension $d=4$ show a critical behavior, see \cite{Zhu}. This inspires our first main result, cf Theorem \ref{thm:cv:snake}:

\begin{display}\label{thm.snake}
    When $d= 4$ and $\mu$ has finite variance, given any exhaustion $\Lambda$, the infinite snake conditioned to avoid the origin before escaping $\Lambda_R$ converges in distribution when $R \to \infty$ towards a limit which is independent of the exhaustion. 
\end{display}
\begin{Rem}
The expression ``before escaping $\Lambda_R$'' in the above statement is somewhat unprecise, since we have not defined an order on $T_{\infty}$: this will be carefully done in Section \ref{sec:snake}.
\end{Rem}
The way we prove this result is by first phrasing it in terms of a killed random walk (KRW) with decaying killing rate, see \cite{Zhu} and our Section \ref{sec:snake} for the connection between these two perspectives.
Moreover, the KRW is an interesting object of its own and we introduce it in the definition below. Let $S$ denote the simple random walk (SRW) on $\Z^d$.
\begin{Def}\label{def:krw}
    Let $\K: \Z^d \to [0,1]$ a \emph{killing function}. The killed random walk (KRW) $S^{\K}$ is the SRW $S$ killed at each step with probability $\K$. More precisely, if $\Delta$ is an additional state called the cemetery, the KRW is the Markov chain on $\Z^d \cup \{ \Delta \}$ with transition probabilities: $Q^{\K}(\Delta, \Delta) = 1$, $Q^{\K}(x, \Delta) = \K(x)$ if $x \in \Z^d$ and otherwise
    \begin{equation}
        Q^{\K}(x,y) = \frac{1-\K(x)}{2d}\mathds{1}_{x \sim y}.
    \end{equation}
\end{Def}
For $A \subset \Z^d \fix{\cup \{\Delta\}}$, define the stopping time $\tau(A) = \inf \{n \geq 0, S_n^{\fix{\K}} \in A\}$. In particular, $\tau(\Delta)$ denotes the killing time of the KRW. We denote by $E_{\K} \subset \Z^d$ the sites $x \in \Z^d$ from which there exists a path \fix{$x = x_0, x_1, x_2, \dots$} along which \fix{$\K(x_i) < 1$ for all $i \geq 0$ and $|x_i| \overset{i \to \infty}{\longrightarrow}\infty$}, and we always assume that $E_{\K}$ is connected, see Section \ref{sec:preliminaries} for more details. When $k(x) < 1$ for every $x \in \mathbb{Z}^d$, then $E_{\K} = \Z^d$: the reader can keep this case in mind.
\fix{Following~\cite{PemVol98}, we say that the KRW is \emph{trapped} if for all $x \in E_\K$, $\Proba_x[\tau(\Delta) < \infty]  = 1$.
Otherwise, the KRW is \emph{not trapped}: in this case, since $\K < 1$ on $E_{\K}$ (and because it is a connected set), $\Proba_x[\tau(\Delta) < \infty]  < 1$ for all $x \in E_\K$.}
An important question at this point is whether $S^{\K}$ \fix{is trapped or not}. In dimension $d \leq 2$, the answer is easy: if the killing function is not identically $0$ (that is $\K(x) >0$ for some $x \in \Z^2$), since the SRW is recurrent it will almost surely visit $x$ infinitely many times, and therefore $S^{\K}$ dies with probability one. In dimension $d \geq 3$, the question is subtle. 
\fix{A first partial answer was given in \cite{Hollander}, which was completed and extended in~\cite{PemVol98}.
For $d \geq 3$, $x,y \in \Z^d$, let $g(x,y)$ denote the Green function of the SRW, that is
\be\label{eq:def:green}
    g(x,y) = \sum_{n = 0}^{\infty} \Proba_x[S_n = y].
\ee
It is well-known that $g(x,y) \sim a_d |x-y|^{2-d}$ as $|x-y|\to \infty$ for some (explicit) constant $a_d$ (see for example Theorem 1.5.4 of \cite{Lawler}).
\begin{Thm}[Pemantle, Volkov]\label{th.rec}
    Assume that $d \geq 3$ and that $\K$ is \emph{approximately invariant under rotation}, that is there exist $x_0 \in E_\K$ and $C, C' > 1$ such that
    \be\label{eq:approx:rot:inv}
        \frac{1}{C}g(x_0,y) \leq g(x_0,x) \leq Cg(x_0,y) \implies \frac{1}{C'}\K(x) \leq \K(y) \leq C'\K(x).
    \ee
    Then the KRW is trapped if and only if~$\sum_{x \in \Z^d} g(x_0,x)\K(x) = \infty$, or equivalently $\sum_{x \in \Z^d} |x|^{2-d}\K(x) = \infty$.
\end{Thm}
The theorem of~\cite{PemVol98} holds for more general Markov chains under a \emph{reasonable annuli condition}, which is satisfied by the SRW on $\Z^d$, see their section 3 for details.}
Note that by using monotonicity arguments, this theorem allows us to study much more general
killing functions than only rotationally invariant ones.

\bigskip

A \fix{non-trapped} KRW conditioned to survive is a perfectly well-defined object because the event $\fix{\big\{}\tau(\Delta) = \infty\fix{\big\}}$ has positive probability. The goal of this work is to extend this definition to the \fix{trapped} case, but this cannot be done without care, since the event $\fix{\big\{}\tau(\Delta) = \infty\fix{\big\}}$ has probability zero.

\begin{Rem}\label{rem:1d}
    It follows from a simple Gambler's ruin computation that the \fix{KRW} $S^{\K}$ on $\mathbb{Z}$ associated with the killing function $\K(x) = (1/2)\mathbbm{1}_0 (x)$ conditioned to leave the segment $[-b_-R, b_+R]$ before dying satisfies
    \be
        \Proba_0\big[S^{\K}_1 = 1|\min\big(\tau(b_+R),\tau(-b_-R)\big) < \tau(\Delta)\big] = \frac{1}{4}+\frac{b_-}{2(b_++b_-)}.
    \ee
    This shows that the limit in distribution of the KRW conditioned to leave $[b_-R, b_+R]$ depends on the ratio $b_+/b_-$, and hence it is not independent of the way in which we exhaust the underlying space. This remark motivates the following definition.
\end{Rem}

\begin{Def}\label{def:conditioned:KRW}
    We say that the KRW conditioned to survive is well-defined if for any exhaustion $\Lambda$, for any $x \in \EK$, the KRW started at $x$ conditioned to hit $(\Lambda_R)^c := \Z^d \setminus \Lambda_R$ before dying converges in distribution when $R \to \infty$ towards a Markov chain on $\EK$ which is independent of the exhaustion.
\end{Def}
In the above definition, the convergence in distribution is understood as the convergence of the first steps of the walk  $(S_0, S_1, \dots, S_n)$ for $n$ fixed, as $R$ goes to infinity.
This definition generalizes the common definition in the \fix{non-trapped} case since for any exhaustion the event $\fix{\big\{}\tau(\Delta) = \infty\fix{\big\}} = \bigcap_{R \in \N} \fix{\big\{}\tau(\Lambda_R^c) < \tau(\Delta)\fix{\big\}}$ has positive probability.
\begin{Rem}
    Definition \ref{def:conditioned:KRW} is one among many natural ways to define this object. We could, for example, condition the walk to survive until time $n$ and let $n \to \infty$.
    The equivalence of these two approaches in our more general setting is not studied in this article.
    These two approaches were shown to be equivalent in \cite{Doney} for random walks on $\N$ with zero drift, when conditioned to avoid the origin. Note that in this case, Remark \ref{rem:1d} does not apply since $\mathbb{N}$ has a single end, therefore \fix{all} exhaustion\fix{s} \fix{are} equivalent.
\end{Rem} 
It has been known for a long time that this sort of convergence is equivalent to a ratio limit theorem (see for example \cite{Doney}). Before stating this characterization, we need a definition.
\begin{Def}
    A function $a : \Z^d \to \R$ is massive harmonic if for all $x \in \Z^d$,
    \be
        a(x) = \frac{1-\K(x)}{2d}\sum_{y \sim x}a(y)
    \ee
\end{Def}
\begin{Prop}\label{prop:ratio:limit}
    The KRW conditioned to survive is well-defined (in the sense of Definition \ref{def:conditioned:KRW}) if and only if 
    there exists a massive harmonic function $a: \Z^d \to \R$, positive on $\EK$, such that for every exhaustion $\Lambda$, for all $x,y \in \EK$,
    \be\label{eq:cv:ratio}
        \frac{\Proba_y[\tau(\Lambda_R^c) < \tau(\Delta)]}{\Proba_x[\tau(\Lambda_R^c) < \tau(\Delta)]} \overset{R \to \infty}{\longrightarrow} \frac{a(y)}{a(x)}.
    \ee
    In this case, the KRW conditioned to survive is the Doob transform of $S^{\K}$ by $a$.
\end{Prop}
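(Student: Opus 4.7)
Set $u_R(x) := \Proba_x[\tau(\Lambda_R^c) < \tau(\Delta)]$. A first-step decomposition gives $u_R \equiv 1$ on $\Lambda_R^c$ together with the massive harmonic relation $u_R(x) = \tfrac{1-\K(x)}{2d}\sum_{y \sim x} u_R(y)$ on $\Lambda_R$. The whole argument hinges on the following fundamental identity, which I would derive from the Markov property at time $n$: for any path $x = y_0, y_1, \ldots, y_n$ in $\Z^d$ and any $R$ large enough that all $y_i \in \Lambda_R$,
\be
\Proba_x\big[S_1 = y_1, \ldots, S_n = y_n,\ \tau(\Lambda_R^c) < \tau(\Delta)\big] = \Big(\prod_{i=1}^n Q^{\K}(y_{i-1}, y_i)\Big)\, u_R(y_n);
\ee
the prescribed trajectory carries probability $\prod Q^{\K}$, and from $y_n$ the walk still has to reach $\Lambda_R^c$ before dying.

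For the \emph{if} direction, the plan is to divide the fundamental identity by $u_R(x)$ and let $R \to \infty$; the assumed ratio limit yields that the conditional probability of the path converges to $\prod_{i=1}^n Q^{\K}(y_{i-1}, y_i) a(y_i)/a(y_{i-1})$, which is the $n$-step distribution of the Doob transform $\tilde Q(x,y) := Q^{\K}(x,y) a(y)/a(x)$. Massive harmonicity of $a$ is exactly the statement that $\sum_y \tilde Q(x, y) = 1$, so $\tilde Q$ is an honest Markov kernel on $\EK$, and this identifies the limit as the Doob transform of $S^{\K}$ by $a$, with no dependence on $\Lambda$.

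For the \emph{only if} direction, I will fix an arbitrary exhaustion $\Lambda$ and apply the fundamental identity with $n = 1$: convergence of $\Proba_x[S_1 = y \mid \tau(\Lambda_R^c) < \tau(\Delta)] = Q^{\K}(x,y) u_R(y)/u_R(x)$, combined with $Q^{\K}(x,y) > 0$ for $x \in \EK$ and $y \sim x$, forces $u_R(y)/u_R(x)$ to converge for every neighboring pair in $\EK$. By the assumed connectedness of $\EK$, any $y \in \EK$ is joined to a fixed base point $x_0 \in \EK$ by a path in $\EK$, so $u_R(y)/u_R(x_0)$ converges by multiplicativity; I set $a(y)$ equal to this limit (and $a \equiv 0$ outside $\EK$). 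Passing to the limit in the massive harmonic relation satisfied by $u_R(\cdot)/u_R(x_0)$---valid once $R$ is large enough that $x$ and all its neighbors lie in $\Lambda_R$---yields massive harmonicity of $a$. Exhaustion-independence is then automatic: the limit kernel $\tilde Q$ is exhaustion-independent by hypothesis, and it determines $a$ up to the normalization $a(x_0) = 1$.

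The hard part will be to establish strict positivity of $a$ on $\EK$: I interpret ``Markov chain on $\EK$'' in Definition \ref{def:conditioned:KRW} as meaning a chain whose state space is genuinely $\EK$, so that from $x_0$ every $y \in \EK$ is hit at some time with positive probability. Feeding this back into the fundamental identity at level $n$ then forces $a(y) > 0$ for every $y \in \EK$. The remaining verifications are routine applications of the Markov property, exploiting the clean factorization of each conditioned path probability as $(\text{path weight}) \times u_R(\text{endpoint})/u_R(\text{start})$.
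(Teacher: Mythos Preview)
Your approach is essentially the paper's: both proofs rest on the Markov-property factorization
\[
\Proba_x\big[S_1^{\K}=y_1,\ldots,S_n^{\K}=y_n,\ \tau(\Lambda_R^c)<\tau(\Delta)\big]=\Big(\prod_{i=1}^n Q^{\K}(y_{i-1},y_i)\Big)\,u_R(y_n),
\]
divide by $u_R(x)$ and let $R\to\infty$ for the ``if'' direction, and use the case $n=1$ together with connectedness of $\EK$ for the ``only if'' direction.

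The one point of divergence is how you secure strict positivity of $a$ on $\EK$. Your reading of ``Markov chain on $\EK$'' as entailing that every $y\in\EK$ is reached from $x_0$ with positive probability is stronger than what Definition~\ref{def:conditioned:KRW} literally asserts. The paper's argument avoids this: for neighbours $x\sim y$ in $\EK$, the hypothesis also applies to the walk started at $y$, so $\Proba_y[S_1^{\K}=x\mid \tau(\Lambda_R^c)<\tau(\Delta)]$ converges as well; thus both $u_R(y)/u_R(x)$ and $u_R(x)/u_R(y)$ have limits in $[0,\infty)$, and since their product is $1$ both lie in $(0,\infty)$. Connectedness then propagates positivity. (Equivalently, once you have $a\ge 0$, $a(x_0)=1$, and massive harmonicity, the relation $a(x)=\tfrac{1-\K(x)}{2d}\sum_{z\sim x}a(z)$ with $1-\K(x)>0$ for $x\in\EK$ shows that $a(x)=0$ forces $a(z)=0$ for every neighbour $z$, so $\{a=0\}\cap\EK$ is either empty or all of $\EK$; the normalization $a(x_0)=1$ excludes the latter.)
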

We will provide a proof of this classical result in Section \ref{sec:preliminaries}.  

\begin{Ex}\label{ex:krw:2d}
    If $d=2$ and $\K = \mathbbm{1}_0 (x)$, the KRW corresponds to the SRW in $\Z^2$ killed at the origin. In this case, $\EK = \Z^2 \setminus \{0\}$ is connected and Proposition \ref{prop:ratio:limit} is known to hold for the specific exhaustion $\Lambda_R = B(R)$ (see for example Lemma 3.3(i) of \cite{2dRI}) where $a$ is the potential kernel in two dimensions.
\end{Ex}
Our first main result on KRW\fix{s} gives the existence of the KRW conditioned to survive when the killing function decays fast enough.
\begin{Thm}\label{thm.cv}
    Let $d \geq 2$ and assume that $\K: \Z^d \to [0, 1]$ satisfies $\K(x) = o(|x|^{-2})$ when $|x| \to \infty$. Let $\Lambda$ be any exhaustion of $\Z^d$. Then if we fix any $x_0$ in $\EK$, the limit 
    \begin{equation}\label{prop.cv}
        a(x) = \lim_{R \to \infty} \frac{\Proba_x[\tau(\Lambda_R^c) < \tau(\Delta)]}{\Proba_{x_0}[\tau(\Lambda_R^c) < \tau(\Delta)]}
    \end{equation}
    exists for all $x \in \Z^d$ and does not depend on the sequence $\Lambda$. It is positive on $\EK$, zero on $\EK^{c}$ and massive harmonic on $\EK$.  Hence, by the characterization of Proposition \ref{prop:ratio:limit}, the KRW conditioned to survive is well-defined.
\end{Thm}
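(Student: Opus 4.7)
Write $v_R(x) := \Proba_x[\tau(\Lambda_R^c) < \tau(\Delta)]$; for $r < R$ with $B_r \subset \Lambda_R$, set $h_r(x) := \Proba_x[\tau(\partial B_r) < \tau(\Delta)]$ and $\mu_r^x(z) := \Proba_x[S_{\tau(\partial B_r)} = z \mid \tau(\partial B_r)<\tau(\Delta)]$ for $z \in \partial B_r$. The strong Markov property at $\tau(\partial B_r)$ gives
\be\label{plan:ratio}
    \frac{v_R(x)}{v_R(x_0)} \;=\; \frac{h_r(x)}{h_r(x_0)} \cdot \frac{\sum_z \mu_r^x(z)\, v_R(z)}{\sum_z \mu_r^{x_0}(z)\, v_R(z)},
\ee
a decomposition at scale $r$ which holds for \emph{any} exhaustion $\Lambda$. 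The core of the plan is to establish the following decoupling estimate: for every $x, x_0 \in \EK$, there exists $\eps(r) \to 0$ such that
\be\label{plan:decoup}
    (1-\eps(r))\mu_r^{x_0}(z) \leq \mu_r^x(z) \leq (1+\eps(r))\mu_r^{x_0}(z) \qquad \forall z \in \partial B_r.
\ee
Granting \eqref{plan:decoup}, the second factor on the right-hand side of \eqref{plan:ratio} is $1+O(\eps(r))$; and, since the strong Markov property applied at $\tau(x_0)$ (resp.\ $\tau(x)$) combined with connectedness of $\EK$ yields $c_1 \leq h_r(x)/h_r(x_0) \leq c_2$ uniformly in $r$, we deduce that $v_R(x)/v_R(x_0)$ is Cauchy in $R$ with a modulus depending only on $r$ and \emph{not} on the exhaustion. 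This gives simultaneously the existence of $a(x)$ and its independence of $\Lambda$. The remaining claims follow: massive harmonicity of $a$ passes to the limit in the finite identity $v_R(x) = \frac{1-\K(x)}{2d}\sum_{y\sim x} v_R(y)$; positivity on $\EK$ follows from the same lower bound on $v_R(x)/v_R(x_0)$; and $v_R(x) = 0$ for $x \in \EK^c$ once $R$ is large, since the connected component of $x$ in $\{\K<1\}$ is then contained in $\Lambda_R$ and its boundary consists of sites with $\K = 1$.

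The main obstacle is the decoupling estimate \eqref{plan:decoup}. Writing $H_r(y, z) := \Proba_y[S_{\tau(\partial B_r)} = z,\, \tau(\partial B_r)<\tau(\Delta)]$, the function $y \mapsto H_r(y,z)$ is massive harmonic on $B_r$ with Dirichlet data $\delta_z$ on $\partial B_r$, and \eqref{plan:decoup} amounts to the quantitative statement $H_r(x,z)/H_r(x_0,z) = (h_r(x)/h_r(x_0))(1+o(1))$ uniformly in $z \in \partial B_r$. The assumption $\K(y) = o(|y|^{-2})$ enters here and is, in a sense, sharp: it ensures that at every large scale $r$, the killing potential is a multiplicatively vanishing perturbation of the free SRW Laplacian. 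For unkilled SRW, the analogous estimate $H_{B_r}^{\mathrm{SRW}}(x,z) = H_{B_r}^{\mathrm{SRW}}(x_0,z)(1+O(|x-x_0|/r))$ uniformly in $z \in \partial B_r$ is classical (from the explicit Poisson kernel, or from a mirror coupling of two SRWs). I would transfer this to the massive setting by a two-scale argument at an auxiliary scale $1 \ll r_0 \ll r$: on the near-origin region $B_{r_0}$, walks started from $x$ and $x_0$ both encounter the same "local" killing landscape, so the survival weight accumulated there cancels multiplicatively in the ratio $\mu_r^x/\mu_r^{x_0}$; on the annulus $B_r\setminus B_{r_0}$, the killing obeys $\K(y) = o(r_0^{-2})$, and a Duhamel/resolvent expansion against the SRW Green function then shows that the massive Poisson kernel agrees with the SRW one up to a multiplicative $1+o(1)$, uniformly in $z \in \partial B_r$.
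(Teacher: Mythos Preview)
Your high-level strategy matches the paper's: decompose at an intermediate scale, establish a multiplicative decoupling that traps $v_R(x)/v_R(x_0)$ in a window independent of $R$ and of the exhaustion, and conclude by a Cauchy argument. The remaining claims (massive harmonicity in the limit, positivity on $\EK$, vanishing on $\EK^c$) follow just as you say.

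The paper's implementation of the decoupling differs from yours. Rather than targeting the exit measure on a single sphere, it introduces \emph{two} scales $r_1 \ll r_2$ and a first-exit/last-visit decomposition
\[
v_R(x)=\sum_{v\in\partial B(r_1)}\sum_{w\in\partial B(r_2)} H_{B(r_1)}^{\K}(x,v)\,H_{\Lambda_R}^{\K}(v,w)\,H_{\Lambda_R\setminus B(r_2)}^{\K}(w,\Lambda_R^c),
\]
and proves that the middle factor (essentially the massive Green function between the two spheres) equals $a_d r_2^{2-d}(1+O(\eps))$ uniformly in $v,w$. This factorizes $v_R(x)$ into $h_{r_1}(x)$ times a quantity depending only on $r_2$ and $R$, and the ratio follows. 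Your exit-measure decoupling \eqref{plan:decoup} is in fact a corollary of this lemma (apply the same last-visit factorization to $H_{B_r}^{\K}(x,z)$), so the two targets are equivalent; the paper's is simply more direct to prove.

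Where your sketch has a genuine gap is the justification of \eqref{plan:decoup}. You argue that on $B_r\setminus B_{r_0}$ the killing is $o(r_0^{-2})$ and invoke a Duhamel/resolvent expansion to replace the massive Poisson kernel by the free one. But a walk started on $\partial B_{r_0}$ re-enters $B_{r_0}$ with probability bounded away from zero, and on those excursions the killing is \emph{not} small; the Duhamel correction $\sum_y G^{\mathrm{SRW}}_{B_r}(w,y)\K(y)H_r^{\K}(y,z)$ is uncontrolled for $y$ near the origin. The paper resolves exactly this issue by a path decomposition: paths $\gamma:v\to w$ are split into $\Gamma_1=\{|\gamma|\geq A|v-w|^2\}$, $\Gamma_2=\{\gamma\text{ enters }B(\alpha|v|)\}$, and the rest. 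The first two classes carry negligible $\s$-mass (the $\Gamma_2$ bound uses transience, $\Proba_v[\text{hit }B(\alpha|v|)]\lesssim\alpha^{d-2}$); on the remaining paths one has simultaneously $|\gamma|\lesssim r_2^2$ and $\K\leq\eta(\alpha r_1)^{-2}$ along $\gamma$, whence $\B(\gamma)/\s(\gamma)\geq\exp(-C\eta/(\alpha^2\delta^2))$. This is where $\K=o(|x|^{-2})$ is used quantitatively: it makes (path length)$\times$(max killing on path) small. A first-exit decomposition at $r_0$ plus Duhamel on the annulus does not achieve this without an additional argument for returning paths.

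Finally, your sketch implicitly assumes $d\geq 3$ (the SRW Poisson-kernel estimate you quote, and any resolvent expansion, require a Green function). For $d=2$ the paper gives a separate argument using the potential kernel and a four-scale decomposition; this is not a routine adaptation and your plan does not address it.
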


This theorem will be proved in Section \ref{proof3d} for the case $d \geq 3$ and in Appendix \ref{proof2d} for the case $d=2$. We chose to separate the proofs because the proof for the case $d=2$ is more technical though it involves no new ideas.
\begin{Rem}
    This theorem covers the case of the SRW conditioned to avoid the origin in dimension $d=2$ and extends Example \ref{ex:krw:2d} to any exhaustion: it corresponds to the choice $d=2$ and $\K(x) = \mathbbm{1}_0 (x)$ which indeed satisfies $\K(x) = o(|x|^{-2})$. In this case, the massive harmonic function provided by the theorem is the potential kernel (see the reference in Example \ref{ex:krw:2d}).
\end{Rem}
\begin{Rem}
    When the KRW is \fix{not trapped}, the statement of the theorem  becomes trivial. In dimension $2$, this theorem is not \fix{trivial} since the KRW is always \fix{trapped}. In dimension $d \geq 3$, this theorem is also not \fix{trivial in many cases} since there exist functions $\K(x) = p(|x|)$ with $\sum_{x \in \Z^d} |x|^{2-d}p(|x|) = \infty$ or equivalently $\int_{r = 1}^{\infty}rp(r)dr = \infty$ (and therefore, according to Theorem \ref{th.rec}, the KRW is \fix{trapped}) while $p(|x|) = o(|x|^{-2})$. In particular, the infinite snake is related to one such case: namely $d=4$ and $\K(x) \asymp 1/(|x|^2\log(|x|)$.
\end{Rem}

\begin{Rem}
    It would be interesting to investigate in more detail \fix{the conditions under which such conditioning is robust} with respect to the limiting procedure.
    When the killing function has compact support, we have seen that the robustness holds in $d \geq 2$, but not $d = 1$.
    It is tempting to attribute this fact to the criticality of $d = 2$, for which the random walk is ``barely recurrent''.
    This raises the question of whether this universality holds for more general natural graphs with dimension two and whether it fails as soon as the dimension is strictly smaller than two.   
\end{Rem}

In the next theorem, we give an indication that this robustness of the limit holds only at criticality, not with respect to the dimension, but rather when regarding the decay of the killing rate.

\begin{Thm}\label{krwcounterex}
    Let $d = 2$, $\alpha \in ]14/9, 2[$ and $\K(x) = \min(1, |x|^{-\alpha})$ for $x \in \Z^2$. Then, the KRW conditioned to survive is not well-defined in the sense of Definition \ref{def:conditioned:KRW}. More precisely, there exist two exhaustions of $\Z^2$, $\Lambda^1$ and $\Lambda^2$ and $x, y \in \Z^2$ such that
    \begin{equation}
        \lim_{R \to \infty} \frac{\Proba_x\big[\tau\big((\Lambda^1_R)^c\big) < \tau(\Delta)\big]}{\Proba_y\big[\tau\big((\Lambda^1_R)^c\big) < \tau(\Delta)\big]} \neq \lim_{R \to \infty} \frac{\Proba_x\big[\tau\big((\Lambda^2_R)^c\big) < \tau(\Delta)\big]}{\Proba_y\big[\tau\big((\Lambda^2_R)^c\big) < \tau(\Delta)\big]}
    \end{equation}
    (either one of these limits does not exist, or if they both exist they are different). 
\end{Thm}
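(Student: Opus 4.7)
The plan is to exhibit two exhaustions $\Lambda^1,\Lambda^2$ of $\Z^2$ and two sites $x,y\in\EK$ for which the ratios in \eqref{eq:cv:ratio} do not share a common limit; by Proposition \ref{prop:ratio:limit} this is enough, so in particular I would not attempt to prove that both ratios converge separately. The heuristic driving the construction is that the regime $\alpha<2$ is, in a sense, opposite to the one covered by Theorem \ref{thm.cv}: the killing rate $|x|^{-\alpha}$ is large enough that a walk conditioned to reach $\Lambda_R^c$ before dying is essentially forced to follow a near-ballistic trajectory, so its conditional law retains memory of the macroscopic shape of $\Lambda_R$ at infinity.

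I would first establish sharp asymptotics for the survival probability using the Feynman--Kac representation
\begin{equation*}
\Proba_x[\tau(\Lambda_R^c)<\tau(\Delta)]=\E_x\!\left[\prod_{n=0}^{\tau(\Lambda_R^c)-1}\bigl(1-\K(S_n)\bigr),\;\tau(\Lambda_R^c)<\infty\right].
\end{equation*}
The dominant contribution should come from paths that travel approximately ballistically to some exit point $z$ on the boundary of $\Lambda_R$: for a fixed $z$, optimising the Gaussian cost $|z-x|^2/T$ of reaching $z$ in time $T$ against the cumulative killing cost $T\cdot|z|^{-\alpha}$ yields $T_{\mathrm{opt}}\asymp|z-x|^{1+\alpha/2}$ and a minimal cost of order $|z-x|^{1-\alpha/2}$. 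A matching lower bound via a tube construction around such a trajectory and an upper bound by a sample-path large-deviation argument should give
\begin{equation*}
\Proba_x[\tau(\Lambda_R^c)<\tau(\Delta)]\asymp C(x,\Lambda_R)\,\exp\!\Bigl(-\min_{z\in\partial\Lambda_R}I(x,z)\Bigr),\qquad I(x,z)\asymp|z-x|^{1-\alpha/2},
\end{equation*}
with control on $C(x,\Lambda_R)$ precise enough to distinguish exhaustions whose leading exponent coincides.

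With such estimates in hand I would design $\Lambda^1,\Lambda^2$ so that their boundaries promote different preferred exit sectors: the cleanest choice is probably $\Lambda^1_R=B(R)$ together with an anisotropic family $\Lambda^2_R$ breaking the $x_1\leftrightarrow -x_1$ symmetry (for example a translated ball of centre $(aR^\gamma,0)$ with $\gamma<1$, still exhausting $\Z^2$, or an elongated rectangle). For a pair $x,y$ with $y$ slightly shifted toward the direction favoured by $\Lambda^2$, the additive bonus picked up in the ballistic cost $I(\cdot,z)$ differs between the two exhaustions, yielding distinct values for $\lim_R\Proba_y/\Proba_x$. The main obstacle is the level of quantitative precision required: both exhaustions produce the same leading exponent $R^{1-\alpha/2}$, so one must track subleading shape-dependent corrections, which demands controlling the transverse fluctuations of the near-optimal tube as well as the contribution of atypical trajectories. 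This is plausibly where the technical threshold $\alpha>14/9$ enters, ensuring that the error terms in the ballistic approximation stay asymptotically smaller than the shape-dependent correction one is trying to isolate.
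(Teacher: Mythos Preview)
Your heuristic is on target: the conditioned walk escapes ballistically along a near-straight ray, and this directional memory is what makes the limit sensitive to the exhaustion. But the specific construction you propose makes the problem harder than it needs to be, and your diagnosis of where $14/9$ comes from is off.

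The paper avoids the subleading analysis entirely by a symmetry trick. It takes $\Lambda^1_R = B(R)\cup(\Z_-\times\Z)$ and $\Lambda^2_R = B(R)\cup(\Z_+\times\Z)$, together with $x=(r,0)$ and $y=(-r,0)$. Since the two exhaustions and the two points are mirror images under $x_1\mapsto -x_1$, the ratio $\Proba_y/\Proba_x$ for $\Lambda^1$ equals the reciprocal of the same ratio for $\Lambda^2$; so it suffices to show that one of them is bounded away from $1$. This in turn only requires that, conditionally on survival, the walk from $(r,0)$ exits $B(R)$ in the sector $|\arg|\le\pi/4$ with probability close to $1$ --- a leading-order statement, not a subleading correction. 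Your choice of a ball versus a translated ball forces you to resolve differences that are genuinely second-order, which is a much heavier lift and not what the paper does.

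The threshold $\alpha>14/9$ has nothing to do with distinguishing shape-dependent corrections. The paper first proves the full result (for all $\alpha\in(0,2)$) for the killed Brownian motion, using the skew-product decomposition to control the angular part. It then transfers to the random walk via the KMT strong approximation: on the relevant time scale $n\asymp r^{2-2\beta}$ with $\beta=(2-\alpha)/4$, the coupling error is of order $n^{1/4}\sqrt{\log n}\asymp r^{3(1-\beta)/4}\sqrt{\log r}$, and one needs this to be negligible compared to the large-deviation rate $r^{2\beta}$. The inequality $(1-\beta)/4>2\beta$ rearranges to $\beta<1/9$, i.e.\ $\alpha>14/9$. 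So the restriction is an artefact of the discretisation step, not of the variational analysis you sketch.
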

This theorem is proved in Section \ref{appendixA}.
\begin{Rem}
    We did not try to optimize the lower bound for $\alpha$, and we believe that this holds for $\alpha \in [0,2[$. We state the result only in dimension $d=2$ because the proof uses the skew-product decomposition which takes a particularly nice and explicit form in dimension $d=2$. We believe that it can be extended to all dimensions $d \geq 2$ with the same techniques.
\end{Rem}
\begin{Rem}
    We will see in the proof that we can take for example $\Lambda^1_R = B(R) \cup \big(\Z_- \times \Z\big)$, $\Lambda^2_R =  B(R) \cup \big(\Z_+ \times \Z\big)$ for all $R > 0$ and $x = -y = (r,0)$ for $r$ large enough but fixed. These sets are not finite, but a counter-example with finite subsets can easily be built from them, see Remark \ref{rem:exhaustion:finite}.
\end{Rem}
\begin{Rem}
    Along the specific exhaustion $\Lambda_R = B(R)$, a limit in distribution might exist due to the rotational invariance (it exists at least for the killed \fix{B}rownian motion, see Section \ref{counterex} for a definition). This theorem shows that the limit is not universal, since it depends on whether we “shift the infinity to the right” or “to the left”.
\end{Rem}

\paragraph{Open questions.} Putting together Theorem \ref{thm.cv} and Theorem \ref{krwcounterex} we know that for $\K(x) = o(|x|^{-2})$ the KRW conditioned to survive is well-defined, while it \fix{is} not for $\K(x) = \min(1,|x|^{-\alpha})$ with $\alpha < 2$: therefore the exponent $2$ is sharp. However, it would be interesting to know what happens around $|x|^{-2}$, in particular because \cite{brw3} shows that the killing function associated to the infinite snake in dimension $d \leq 3$ is $\K(x) \asymp |x|^{-2}$, so our theorems neither prove nor disprove the existence of the infinite snake conditioned to avoid the origin.

With our definition, the existence of the KRW conditioned to survive is related \fix{to} the existence of massive harmonic functions. Is our definition equivalent to the uniqueness of massive harmonic functions, with some decay assumptions? For random walks killed with constant rate, there is a notion of $t$-Martin boundary, generalizing the usual Martin boundary, see \cite{Woess} Section 24 and \cite{MartinBoundary} for a more recent work on the subject. Can our definition of KRW conditioned to survive be rephrased in terms of a generalized Martin boundary of the KRW?

\paragraph{Previous works}
This article is at the intersection of two historic lines of research: research in statistical physics on random walks in a field of traps and research on random walks conditioned to avoid some fixed finite set in dimension $1$ and $2$. The random walk in a field of trap\fix{s} (that we call here KRW) is an object coming from statistical physics. A review of the physical models and their mathematical study can be found in \cite{trapping}. A more recent review with less physical background is \cite{mobile/immobile}. The continuous analog of this model is the Brownian motion in a field of trap\fix{s} which is also a well studied model, see for example \cite{Sznitman}. The literature focuses mainly on the quenched and annealed regimes. In the quenched case, a trap is put in every $x \in \Z^d$ with probability $\K(x) \in [0,1]$, and when the KRW hits a trap it dies. In the annealed case, every time the KRW is at site $x$, it dies with probability $\K(x)$. This corresponds to resampling the distribution of the traps at each step. Here we will focus on the annealed case with killing probability decaying at infinity while most of the literature focuses on the case of traps that are homogeneously distributed among the space. 
Our model was first studied in~\cite{Hollander} which develops \fix{necessary and sufficient conditions for trapping/non-trapping on $\Z^d$}, both in the quenched and annealed cases.
\fix{Their results were extended in~\cite{PemVol98}, where the authors work with more general Markov chains, prove equivalence of the quenched and annealed problems and lighten the regularity assumptions of~\cite{Hollander}.}
An article with a similar flavor to our article is \cite{Madras} which also studies the annealed case with a killing function decaying at infinity. In the \fix{trapped} case (corresponding to $d=2$) they give criteria for the KRW \fix{to} be long-lived (i.e. the KRW coming from infinity has a positive chance of visiting a neighborhood of the origin before being killed). 
The most simple case of the KRW is the walk killed with probability $1$ when it hits a fixed finite set $A$ (corresponding to the killing function $\K(x) = \mathbbm{1}_A(x)$). 
This model is the same in the annealed or quenched case, and the \fix{trapping/non-trapping} of the KRW is equivalent to the recurrence/transience of the set $A$ for the SRW. In this simple case, determining the surviving probability boils down to computing the probability of hitting the set $A$, for which all the tools of capacity theory are available. A historic\fix{al} reference, either in the recurrent or transient case, is the book \cite{Spitzer} (Chapters II, IV and VI for the cases $d=2$, $d=1$, $d\geq 3$).\par
This leads us to the other line of research to which our article belongs, in the most simple setting of a single trap placed at the origin, but for general random walks. 
It has been known for a long time that the existence of a random walk conditioned to survive is linked to the existence of harmonic functions, and that when the random walk conditioned to survive exists it is the h-transform of the random walk by a massive harmonic function.
In one dimension, the problem of finding harmonic functions on the half-line of $\Z$ is settled in Theorem E3 of \cite{Spitzer} for recurrent aperiodic random walks. 
A broad generalization, still in the one-dimensional case, is \cite{Doney} which studies random walks (centered or not) on the continuous line $\R$ conditioned to stay positive. 
Another interesting paper in the same spirit is \cite{ratio}, which gives ratio limit theorems for periodic irreducible random walks on $\Z^d$, for the probability of hitting finite sets. 
This is exactly the type of convergence results that we will need to prove, and their results are equivalent to showing the existence of the random walk conditioned never to hit a finite set $\Omega$, for two limiting procedure: conditioning on $\fix{\{}\tau(\Omega) = n\fix{\}}$ or $\fix{\{}\tau(\Omega) > n\fix{\}}$ and letting $n \to \infty$. 
We finally mention more recent developments in this area, which do not focus on the convergence but rather on properties of the limiting object, for example \cite{Gantert} which studies the range of the SRW conditioned to avoid the origin in two dimensions.

\paragraph{Proof ideas}
The general idea of the proof of Theorem \ref{thm.cv} in dimension $d\geq 3$ is based on the following picture: even though the killing in the first few steps of the walk is very depend\fix{a}nt of the initial position, and at long time scale the walk dies almost surely, since $\K(x) = o\Big(\frac{1}{|x|^2}\Big)$ we can find an intermediate time scale during which the killing is negligible but the KRW has enough time to forget its initial position. This is the main technical step of the proof (Lemma \ref{lem:zhu}), and it is an adaptation of Lemma 3.1 of \cite{Zhu} to our more general setting. The proof of this lemma is done by generalizing the arguments of \cite{Zhu}. Once this key result is established, the probability to survive on the very long term is the product of two terms: a multiplicative constant depending only on the initial position, and a second factor (independent of the initial position) depending only on how long the walk runs. We combine this decomposition with a Cauchy sequence argument to show the convergence in~\eqref{prop.cv}. In dimension $d=2$, the proof is more technical because the Green function is more delicate to define so we need several intermediate time scales, but no new idea is needed so we \fix{postpone} the proof to Appendix \ref{proof2d}.\par
The proof of Theorem \ref{krwcounterex} is a completely separate matter. We will first prove a \fix{continuous} analog of this statement involving the killed \fix{B}rownian motion (KBM). In $d$ dimensions, the KBM associated to a killing function $\K: \R^d \to \fix{[0, \infty)}$ is the simple Brownian motion in $\R^d$ which from a given position $x$ goes to an additional cemetery state $\Delta$ with rate $\K$: it is the process with generator $A$ such that for all $f \in \mathcal{C}_0^2(\R^d)$,
\be\label{def:generator:KBM}
    Af = \frac{1}{2}\Delta f - \K f
\ee
All the definitions we made for the KRW have an equivalent for the KBM: we postpone the formalities to Section \ref{counterex}. In Section \ref{counterex}, we will prove the following \fix{continuous} analog of Theorem~\ref{krwcounterex}. 
\begin{Thm}\label{thm.counterex}
    Let $d = 2$ and $\K(x) = 1 \wedge \frac{1}{|x|^{\alpha}}$ for $x \in \R^d$ and for some fixed $\alpha \in ]0,2[$. Then, the KBM conditioned to survive does not make sense. More precisely, there exist two exhaustions of $\R^2$, $\Lambda^1$ and $\Lambda^2$ and $x, y \in \R^d$ such that
    \begin{equation}
        \lim_{R \to \infty} \frac{\Proba_x[\tau((\Lambda^1_R)^c) < \tau(\Delta)]}{\Proba_y[\tau((\Lambda^1_R)^c) < \tau(\Delta)]} \neq \lim_{R \to \infty} \frac{\Proba_x[\tau((\Lambda^2_R)^c) < \tau(\Delta)]}{\Proba_y[\tau((\Lambda^2_R)^c) < \tau(\Delta)]}.
    \end{equation}
    (either one of these limits does not exist, or if they both exist they are different).
\end{Thm}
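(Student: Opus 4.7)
The strategy is to use the skew-product decomposition of planar Brownian motion to reduce the problem to an exactly solvable $1$-dimensional computation involving modified Bessel functions. I take $\Lambda^1_R = \overline{B(R)} \cup \{z \in \R^2 : z_1 \leq 0\}$ and $\Lambda^2_R = \overline{B(R)} \cup \{z \in \R^2 : z_1 \geq 0\}$, together with the symmetric starting points $x = (r, 0)$ and $y = (-r, 0)$, where $r>0$ is to be tuned later. Since $|x|=|y|$, the radial component of the KBM has the same law under $\Proba_x$ and $\Proba_y$, and only the initial angle differs.

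Writing $B(t) = |B(t)| e^{i\theta(t)}$ and performing the time change $u = H(t) := \int_0^t |B(s)|^{-2}\,ds$, one obtains two independent standard one-dimensional Brownian motions $\beta, \gamma$ with $\log|B| - \log r = \beta$ and $\theta - \theta_0 = \gamma$ in the clock $u$. Under this time change, the killing rate $|B|^{-\alpha}\,dt$ becomes $r^{2-\alpha}e^{(2-\alpha)\beta(u)}\,du$, and the event $\{\tau((\Lambda^1_R)^c) < \tau(\Delta)\}$ rewrites as the event that the killing clock does not ring before $\tau_L := \inf\{u : \beta(u) = L\}$, with $L := \log(R/r)$, and that $\gamma(\tau_L) \in (-\pi/2, \pi/2) \pmod{2\pi}$. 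Independence of $\beta$ and $\gamma$ yields
\begin{equation*}
\Proba_x\!\left[\tau((\Lambda^1_R)^c) < \tau(\Delta)\right] = \E\!\left[\exp\!\left(-r^{2-\alpha}\!\int_0^{\tau_L} e^{(2-\alpha)\beta(u)}\,du\right) g_0(\tau_L)\right],
\end{equation*}
where $g_\psi(T) := \Proba[\gamma(T)+\psi \in (-\pi/2,\pi/2) \pmod{2\pi}]$, with the analogous formula for $y$ but $g_\pi$ in place of $g_0$.

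By Fourier analysis on the circle one has $g_0+g_\pi \equiv 1$ and $g_0(T) - g_\pi(T) = \sum_{n\geq 0} \frac{4(-1)^n}{(2n+1)\pi} e^{-(2n+1)^2 T/2}$. Setting $A_R := \Proba_x[\cdot]+\Proba_y[\cdot]$, the key ratio $(\Proba_x - \Proba_y)/A_R$ therefore decomposes into a sum of ratios $\phi_\lambda(0)/\phi_0(0)$, where $\phi_\lambda(x) := \E_x\!\left[\exp(-r^{2-\alpha}\int_0^{\tau_L} e^{(2-\alpha)\beta}\,du)\, e^{-\lambda \tau_L}\right]$ and $\lambda = (2n+1)^2/2$. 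Feynman--Kac gives $\tfrac12 \phi_\lambda'' = (r^{2-\alpha}e^{(2-\alpha)x}+\lambda)\phi_\lambda$ with $\phi_\lambda(L)=1$; the substitution $w(x) = \frac{2\sqrt{2}}{2-\alpha} r^{(2-\alpha)/2} e^{(2-\alpha)x/2}$ turns this into the modified Bessel equation of order $\nu(\lambda) := 2\sqrt{2\lambda}/(2-\alpha)$, whose solution bounded as $x\to-\infty$ is $\phi_\lambda(x) = I_{\nu(\lambda)}(w(x))/I_{\nu(\lambda)}(w(L))$.

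Since the leading large-$z$ asymptotic $I_\nu(z)\sim e^z/\sqrt{2\pi z}$ is $\nu$-independent, the factors at $x=L$ all cancel as $R\to\infty$, producing the explicit limit
\begin{equation*}
S(r) := \lim_{R\to\infty} \frac{\Proba_x - \Proba_y}{A_R} = \sum_{n\geq 0} \frac{4(-1)^n}{(2n+1)\pi}\cdot \frac{I_{\nu_n}(c_r)}{I_0(c_r)},\qquad \nu_n = \frac{2(2n+1)}{2-\alpha},\quad c_r = \frac{2\sqrt{2}}{2-\alpha}\, r^{(2-\alpha)/2}.
\end{equation*}
As $r\to 0$ one has $c_r\to 0$ and $I_{\nu_n}(c_r)/I_0(c_r)\to 0$ for every $n$, so $S(r)\to 0$; as $r\to\infty$ each ratio tends to $1$ and the alternating series sums (by Leibniz) to $1$, so $S(r)\to 1$. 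By continuity $S(r)\in(0,1)$ for all sufficiently large $r$; fixing such an $r$, the ratio $\Proba_x/\Proba_y$ converges along $\Lambda^1$ to $(1+S(r))/(1-S(r)) > 1$, and by the $\pi$-rotation symmetry that exchanges $\Lambda^1 \leftrightarrow \Lambda^2$ and $x \leftrightarrow y$, its $\Lambda^2$-counterpart converges to the reciprocal $(1-S(r))/(1+S(r)) < 1$. These two limits are distinct, proving the theorem. The main technical obstacle I anticipate is justifying the interchange of the infinite Fourier sum with the $R\to\infty$ limit, which requires a dominated-convergence argument with uniform-in-$n$ control on $I_{\nu_n}(c_r)/I_{\nu_n}(w(L))$; this should follow from uniform-order asymptotics of $I_\nu$ combined with the very fast decay $e^{-(2n+1)^2 L/2}$ inherited from the angular heat kernel.
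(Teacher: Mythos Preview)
Your computation is elegant, but it rests on an identification that is not correct. You claim that
\[
\{\tau((\Lambda^1_R)^c)<\tau(\Delta)\}
=\{\tau(R)<\tau(\Delta)\}\cap\{\arg B_{\tau(R)}\in(-\tfrac\pi2,\tfrac\pi2)\},
\]
and this is what lets you decouple the radial and angular parts. But the left-hand side is strictly larger: a trajectory may survive to $\tau(R)$, exit $B(R)$ in the \emph{left} half-plane, and then (still surviving) wander around and hit $\{|z|>R,\ z_1>0\}$ later. In the time-changed picture the correct hitting time of $(\Lambda^1_R)^c$ is the first $u$ with $\beta(u)>L$ \emph{and} $\gamma(u)+\theta_0\in(-\tfrac\pi2,\tfrac\pi2)$, a two-dimensional stopping time that couples $\beta$ and $\gamma$; your formula replaces it by $\tau_L$ together with an angular condition evaluated \emph{at} $\tau_L$, which is a different event. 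The factorization $\E[\exp(-\int)\,g_0(\tau_L)]$ therefore computes the wrong quantity, and everything downstream (the Bessel ODE, the explicit $S(r)$) is a computation of that wrong quantity.

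The gap is not cosmetic: to repair it you would need to show that the extra trajectories---those that first exit $B(R)$ in the wrong half-plane and only later reach $(\Lambda^1_R)^c$---contribute negligibly to $\Proba_y[\tau((\Lambda^1_R)^c)<\tau(\Delta)]$. That statement is equivalent to saying that, conditioned on survival up to $\tau(R)$, the exit angle is with high probability close to the starting angle. This is exactly the content of the paper's Lemmas~\ref{Arprime}--\ref{exhaust}, which are proved by scale-by-scale large-deviation estimates and constitute the main technical work of the section. So your route does not bypass the paper's argument; it presupposes its hardest lemma. By contrast, the paper never attempts an exact formula: it only shows via crude upper and lower bounds that $\Proba_{(r,0)}[\tau((\Lambda^+_{R})^c)<\tau(\Delta)]\ge(1-\delta)\Proba_r[\tau(R)<\tau(\Delta)]$ while $\Proba_{(r,0)}[\tau((\Lambda^-_{R})^c)<\tau(\Delta)]\le\delta\,\Proba_r[\tau(R)<\tau(\Delta)]$, which is already enough for the conclusion.

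A smaller issue: your Feynman--Kac computation uses the potential $|B|^{-\alpha}$ on all of $\R^2$, whereas the stated $\K$ is capped at $1$ near the origin; the Bessel equation you write down corresponds to the uncapped potential, so the ``exact'' solution is not exact for the problem as posed. This is fixable (for large $r$ the trajectories that visit $B(1)$ are negligible), but again that negligibility is of the same flavor as the missing lemma above.
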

\begin{Rem}
    We will see in the proof that we can take for example for all $\Lambda_R^1 = B(R) \cup (\R_{\fix{-}} \times \R)$, $\Lambda_R^2 = B(R) \cup (\R_{\fix{+}} \times \R)$ for all $R>0$ and $x=-y=(r,0)$ for $r$ large enough but fixed. 
\end{Rem}
The proof relies heavily on the rotational invariance of the KBM and the killing function by using the skew product decomposition (see Section 7.15 of \cite{ItoMcKean} for a good reference). When $\alpha$ is close enough to $2$, we can use Skorokhod's embedding and a strong approximation of the SRW by the Brownian motion to transfer the results from the KBM to the KRW: this is how we will show in Section \ref{appendixA} that Theorem \ref{thm.counterex} implies Theorem \ref{krwcounterex}.

\paragraph{Organization of the paper}
Section \ref{sec:snake} is devoted to the connection between the infinite snake and the KRW. In this section we give a precise meaning to the informal statement \ref{thm.snake} and explain why it is a corollary of Theorem \ref{thm.cv}. Section \ref{proof3d} and Section \ref{counterex} are dedicated to the proof of respectively Theorem \ref{thm.cv} in the case $d \geq 3$ and Theorem \ref{thm.counterex}.  Section \ref{appendixA} proves Theorem \ref{krwcounterex} and Appendix \ref{proof2d} proves the case $d=2$ of Theorem \ref{thm.cv}. The proof in the appendix only add technicalities but no new ideas to the proof contained in the main text.

\paragraph{Acknowledgments}
L. R. was partially supported by the DIMERS project ANR-18-CE40-0033 funded by the French National Research Agency. L. R. also thanks IMPA for its hospitality during two research stays in 2020 and 2022. A.T. was supported by grants ``Projeto Universal'' (406250/2016-2) and ``Produtividade em Pesquisa'' (304437/2018-2) from CNPq and
``Jovem Cientista do Nosso Estado'', (202.716/2018) from FAPERJ.
\fix{The authors thank Serguei Popov for mentioning the article~\cite{PemVol98} after a first version of our paper was posted on arxiv, and an anonymous reviewer for mentioning the same paper and for his/her comments that helped improve the presentation and readability of the paper.}


\section{Preliminaries}\label{sec:preliminaries}

\paragraph{Notation.}
In all this article, \fix{$d \geq 2$} denotes a dimension, $|\cdot|$ denotes the \fix{E}uclidean norm in $\R^d$, $B(r)$ denotes the \fix{E}uclidean ball of radius $r$ centered at $0$.\\
For $f,g: \Z^d \to \R_{\fix{+}}$, we write $f \asymp g$ if there are two constants $c,C > 0$ such that for all $x \in \Z^d$, $cg(x) \leq f(x) \leq Cg(x)$. We will always indicate explicitly on which parameters of the problem the constants $c,C$ depend.\\ For $x, y \in \Z^d$, we write $x \sim y$ if $|x-y|=1$. For $A \subset \Z^d$, we define the complement $A^c := \Z^d \setminus A$ and the outer boundary 
\begin{equation}
    \partial A = \{y \in A^{c} \text{ such that } \exists x \in A \text{ with } x \sim y\}.
\end{equation}
The \emph{simple random walk} (SRW) on $\Z^d$ is the Markov chain on $\Z^d$ with transition probabilities
\begin{equation}
    Q(x,y) = \frac{1}{2d}\mathds{1}_{x\sim y}.
\end{equation}
For $A \subset \Z^d$, we define the stopping times $\tau(A) = \inf \{n \geq 0, S_n \in A\}$ and $\tau^+(A) = \inf \{n > 0, S_n \in A\}$. We will write $\tau(r) = \tau(\partial B(r))$.

Let $\K: \Z^d \to [0,1]$ be a killing function. Recall the definition of the \emph{killed random walk} (KRW) from Definition \ref{def:krw}, \fix{and recall the definition of $E_\K$ following Definition \ref{def:krw}.} Starting from any $x \in E_{\K}^c$, $\Proba_x[\tau(\Delta) = \infty] = 0$. Starting from $x \in E_{\K}$, the KRW can be \fix{trapped or not} depending on the dimension and the asymptotic behavior of $\K$ (see Theorem \ref{th.rec}). \fix{Recall that in this article}, we always assume that $E_{\K}$ is connected to avoid degenerated cases: this is anyway the case for all the killing functions decaying at infinity that we study.

\paragraph{Proof of Proposition \ref{prop:ratio:limit}.}
This proof is not new, we include it to make the reader familiar with the notation. 
\begin{proof}
    If \eqref{eq:cv:ratio} holds, by the simple Markov Property, for all $n \in \N$, \fix{for all} $s_0, ..., s_n \in \EK$, for all $R >0$, 
    \begin{equation}\label{cvRW}
        \begin{aligned}
            \Proba[S^{\K}_0 = s_0, ..., S^{\K}_n = s_n~|~\tau(\Lambda_R^c) < \tau(\Delta)] 
            &\overset{R \to \infty}{\longrightarrow}\frac{a(s_n)}{a(s_0)} \Proba[S^{\K}_0 = s_0, ..., S^{\K}_n = s_n]\\
            &~~~~~~= \prod_{i=0}^{n-1}(1-\K(s_i)) \frac{a(s_n)}{a(s_0)} \Proba[S_0 = s_0, ..., S_n = s_n]
        \end{aligned}
    \end{equation}
    so the KRW conditioned to escape $\Lambda_R$ converges in distribution and hence the KRW is well-defined in the sense of Definition \ref{def:conditioned:KRW}. Reciprocally, if the KRW is well-defined, for all $x \sim y \in \EK$,
    \be
        \Proba[S_0^{\K} = x, S_1^{\K} = y | \tau(\Lambda_R^c) < \tau(\Delta)] = \Proba[S_0^{\K} = x, S_1^{\K} = y] \frac{\Proba_y[\tau(\Lambda_R^c) < \tau(\Delta)]}{\Proba_x[\tau(\Lambda_R^c) < \tau(\Delta)]}
    \ee
    has a limit in $[0, \infty)$ which is independent of $\Lambda$, and since $\Proba[S_0^{\K} = x, S_1^{\K} = y] > 0$ the ratio must converge. By the same argument with $x$ and $y$ interchanged, the inverse ratio must also converge towards a limit in $[0,\infty)$ so both limits are in $(0,\infty)$. If we choose any $x_0 \in \EK$, since $\EK$ is connected, this implies that there exists a positive function $a$ such that for all exhaustion
    \be
        \frac{\Proba_x[\tau(\Lambda_R^c) < \tau(\Delta)]}{\Proba_{x_0}[\tau(\Lambda_R^c) < \tau(\Delta)]} \overset{R \to \infty}{\longrightarrow} \frac{a(x)}{a(x_0)}. 
    \ee
    The massive harmonicity of $x \to \Proba_x[\tau(\Lambda_R^c) < \tau(\Delta)]$ for all fixed $R$ implies the massive harmonicity of $a$, and the equality stated in the Proposition follows by the simple Markov Property.
\end{proof}


\section{Application to the infinite snake}\label{sec:snake}
We start with a few definitions and notations.
\begin{Def}[Trees]
  If $t = (E(t), V(t))$ is a directed graph with vertex set $V(t)$ and edge set $E(t)$, two vertices $u, v \in V(t)$ are connected if $(uv) \in E(t)$. If $u \in V(t)$, the vertices $y \in V(t)$ such that $(uv) \in E(t)$ are called the offspring of $u$. A path is a sequence of connected vertices. A rooted tree $(t, r)$ is a directed graph $t$ with a marked vertex $r(t)=r$ called the root such that for each $u \in V(t)$ there is a unique path from the root to $u$ (we will drop the root from the notation when there is no risk of confusion). Let $\T$ be the class of finite rooted trees and $\T^{\infty}$ be the class of infinite rooted trees with exactly one infinite path starting from the root. This path is called the spine of the tree. If $t \in \T^{\infty}$, we denote by $u_0 = u_0(t), u_1 = u_1(t), ... \in V(t)$ the vertices of the spine (ordered from the root to infinity: $u_0 = r$). From each vertex $u_i$ of the spine grows a finite “bush” $b_i, = b_i(t)$ rooted at $u_i$: it is the induced subgraph of $t$ constituted of the vertices $u$ such that there is a path from $u_i$ to $u$ but not from $u_{i+1}$ to $u$. The bushes are finite rooted trees: $(b_i,u_i) \in \T$.
\end{Def}
\begin{Def}[Tree-indexed random walks]
If $(t,r) \in \T \cup \T^{\infty}$, a SRW indexed by $t$ in dimension $d$ starting from $x \in \Z^d$ is a random function $X_t: V(t) \to \Z^d$ such that $X_t(r) = x$ and $\left(X_t(u)-X_t(v)\right)_{(uv) \in E(t)}$ are i.i.d. increments of a SRW in $\Z^d$.
\end{Def}
\begin{Def}[Galton-Watson trees and Kesten trees]
    Let $\mu: \fix{\NN} \to \fix{[0, \infty)}$ be such that $\sum_{n \geq 0} \mu(n) = 1$ and $\E[\mu] = \sum_{n \geq 0} n\mu(n) = 1$. It is called a critical offspring distribution. We will always assume that $\mu$ has finite variance $\sigma^2 < \infty$. A $\mu$-Galton-Watson ($\mu$-GW) tree is a random tree such that the number of offsprings of each individual are i.i.d. random variables with law $\mu$. We will use the letter $T$ for a $\mu$-GW tree. The size-biased measure $\mu^{\star}$ is defined by $\mu^{\star}(i) = i\mu(i)$ for all $i \geq 0$. It is a probability distribution since $\E[\mu]=1$, it has finite expectation since $\mu$ has finite variance and $\mu^{\star}(0)=0$. A multitype $\mu$-GW tree is a random finite rooted tree such that the number of offsprings of the root is distributed according to $\mu^{\star}-1$ (the root has $i$ offspring with probability $\mu^{\star}(i+1)$) and the number of offsprings of the other vertices are distributed independently according to $\mu$. We will denote by $T^{\star}$ a multitype $\mu$-GW tree. The Kesten tree $(T^{\infty},r)$ associated to $\mu$ is a random element of $\T^{\infty}$, which can be constructed as follows: start with an infinite path of vertices $r = u_0, u_1,...$ called the spine. To each vertex of the spine, graft an independent multitype critical $\mu$-GW tree.
\end{Def}
Kesten showed that this random infinite tree is in some sense the critical $\mu$-GW conditioned to survive, even though this event has probability $0$: see \cite{GWtrees} (Corollary 3.13, 3.14 and 3.15) for a modern introductory reference.
\begin{Rem}
    The SRW indexed by the $\mu$-GW tree $X_T$ is often called the branching random walk (BRW). It is said to be critical if $\mu$ is.
\end{Rem}
\begin{Def}[Infinite snake]\label{def:snake}
    The infinite snake $X_{T^{\infty}}$ is a SRW indexed by the Kesten tree. More precisely, for any $x \in \Z^d$, $(T^{\infty},r)$ is a Kesten tree and conditionally on $T^{\infty}$, the infinite snake started at $x$ $X_{T^{\infty}}$ is a SRW started at $x$ indexed by $T^{\infty}$ and independent of $T^{\infty}$. The infinite snake is said to be recurrent if for all $x \in \Z^d$, $\Proba_x[\exists u \in T^{\infty}, X_u = 0] = 1$, otherwise it is said to be transient. If $X_{T^{\infty}}$ is an infinite snake started at $x$, the spine of the tree $u_0, u_1,...$ performs a SRW in $\Z^d$ started at $x$ that we will denote $S_i = X_{u_i}$ for all $i \geq 0$. Besides, the branches $b_i$ are independent multitype $\mu$-GW trees, so conditionally on $(u_i, S_i)_{i\geq 0}$ the restriction of the infinite snake to each branches are independent and the law $X_{b_i}$ of the infinite snake restricted to $b_i$ is that of a SRW started at $S_i$ indexed by a multitype $\mu$-GW tree.
\end{Def}
In \cite{Curien}, the following is proved:
\begin{Thm}[Benjamini, Curien]\label{thm.curien}
  Assume that $\mu$ is a critical offspring distribution with finite variance. Then the infinite snake is recurrent when $d \leq 4$ and transient when $d \geq 5$.
\end{Thm}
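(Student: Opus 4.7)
The plan is to reduce the question to a recurrence/transience dichotomy for a killed random walk and then invoke Theorem \ref{th.rec}. Condition on the spine $u_0, u_1, \dots$ of the Kesten tree, and write $S_i := X_{u_i}$; by Definition \ref{def:snake}, $(S_i)_{i \geq 0}$ is an ordinary SRW on $\Z^d$, and conditionally on the spine the bushes $b_0, b_1, \dots$ are independent multitype $\mu$-GW trees along which the tree-indexed walks $X_{b_i}$ are independent, with $X_{b_i}$ started at $S_i$. Setting
\[
    q(y) := \Proba_y\big[\exists u \in T^{\star},\ X_u = 0\big],
\]
the independence of the bushes yields
\[
    \Proba_x\big[\forall u \in T^{\infty},\ X_u \neq 0\big] = \E_x\Big[\prod_{i \geq 0}\big(1-q(S_i)\big)\Big].
\]
Hence the snake is recurrent if and only if the SRW killed at each visit to $y$ with probability $q(y)$ dies almost surely, i.e.\ if and only if the KRW with killing function $\K = q$ is recurrent.

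The next step is to estimate $q$ in $d \geq 3$. A first-moment computation gives
\[
    q(y) \leq \E_y\big[\#\{u \in T^{\star} : X_u = 0\}\big] = \sigma^{2} G(0,y) + \mathds{1}_{y=0},
\]
since under the multitype law the expected number of vertices at generation $n \geq 1$ equals $\sigma^{2}$; this produces $q(y) \lesssim |y|^{-(d-2)}$. For the matching lower bound I would use a second moment argument, writing $q(y) \geq \E_y[N]^2/\E_y[N^{2}]$ with $N = \#\{u \in T^{\star}: X_u = 0\}$, and control $\E_y[N^{2}]$ by decomposing ordered pairs of tree vertices according to the depth of their most recent common ancestor and summing products of SRW Green functions. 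This yields $q(y) \asymp |y|^{-(d-2)}$ when $d \geq 5$ and the critical asymptotic $q(y) \asymp (|y|^{2} \log |y|)^{-1}$ when $d = 4$, in agreement with the killing function recalled in the introduction.

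With these estimates in hand I split by dimension. In $d=2$ the spine is recurrent, so $S_i = 0$ for some $i$ almost surely and the snake is trivially recurrent. In $d=3$ and $d=4$, the diffusive scaling $|S_i|^{2} \asymp i$ together with the lower bound on $q$ gives $\sum_i q(S_i) = +\infty$ almost surely --- the borderline divergences being $\sum_i i^{-1/2} = \infty$ and $\sum_i (i \log i)^{-1} = \infty$ respectively --- so $\prod_i (1-q(S_i)) = 0$ almost surely, establishing recurrence. In $d \geq 5$, the upper bound yields
\[
    \E_x\Big[\sum_i q(S_i)\Big] \lesssim \sum_i \E_x\big[|S_i|^{-(d-2)}\big] \lesssim \sum_i i^{-(d-2)/2} < \infty,
\]
so $\sum_i q(S_i) < \infty$ almost surely and the infinite product $\prod_i(1-q(S_i))$ is strictly positive with positive probability, which is transience of the snake.

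The main obstacle is the matching lower bound on $q$. The first moment bound comes essentially for free, but upgrading it to a sharp asymptotic requires a careful second-moment analysis for the number of tree vertices sent to the origin, involving the two-point function of the BRW and a summation over the depth of the most recent common ancestor in the multitype GW tree. The critical dimension $d = 4$ is the most delicate: the logarithmic correction in $q(y)$ must be identified with the correct constant and then propagated along the SRW trajectory so that one recovers the borderline divergence of $\sum_i (i \log i)^{-1}$ needed to conclude recurrence.
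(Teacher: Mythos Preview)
Your reduction to a killed random walk is exactly the paper's route, and the transience argument in $d\geq 5$ via $\E_x\big[\sum_i q(S_i)\big]<\infty$ is sound. The paper, however, does not attempt to derive the asymptotics of $q$: it simply cites Theorem~\ref{thm.zhu} (Le~Gall--Lin, Zhu) for $q(y)\asymp\h(y)$ and then plugs the resulting radial profile into Theorem~\ref{th.rec} to read off recurrence/transience. Your proposal replaces both of these black boxes, and that is where the gaps appear.

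The second-moment lower bound you sketch does give $q(y)\gtrsim (|y|^2\log|y|)^{-1}$ in $d=4$ (the sum $\sum_z G(y,z)G(z,0)^2$ is $\asymp |y|^{-2}\log|y|$ there, so Paley--Zygmund lands on the correct order). In $d=3$, however, the same sum diverges: $G(z,0)^2\asymp|z|^{-2}$ and $\sum_{|z|>2|y|}|z|^{-1}|z|^{-2}=\infty$ in three dimensions, so $\E_y[N^2]=\infty$ and the second-moment method yields no lower bound at all. Your ``borderline $\sum_i i^{-1/2}$'' in $d=3$ implicitly uses $q(y)\gtrsim|y|^{-1}$, but that is the first-moment \emph{upper} bound, not a lower bound; the true order is $q(y)\asymp|y|^{-2}$ (Theorem~\ref{thm.zhu}), which would give the borderline $\sum_i i^{-1}$ instead, and establishing it requires the genuinely nontrivial input of Le~Gall--Lin. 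Separately, the passage from ``$|S_i|^2\asymp i$'' to ``$\sum_i q(S_i)=\infty$ a.s.'' is heuristic: diffusive scaling is not an almost-sure pointwise statement, and this step is precisely what Theorem~\ref{th.rec} (which you announce but then bypass) makes rigorous. If you invoke Theorems~\ref{thm.zhu} and~\ref{th.rec} as the paper does, the argument closes immediately; if you insist on a self-contained proof, you need a different lower bound on $q$ in $d=3$ and a genuine almost-sure divergence argument for the killing sum.
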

It is proved via unimodularity. We will give another proof by using the tools of KRWs. In our study of the infinite snake conditioned to survive, we will need quantitative estimates of \cite{brw3}, \cite{Zhu}, \cite{ZhuI} that we summarize (and simplify) in the following Theorem. Recall that $X_T$ denotes a SRW on a $\mu$-GW tree.
\begin{Thm}[Le Gall, Lin, Zhu]\label{thm.zhu}\label{thm.quantitative}
    Let $\mu$ be a critical offspring distribution with finite variance. Then,
    \be
        \h(x) = \Proba_x[X_T \text{ reaches }0] \asymp 
        \left\{
            \begin{array}{ll}
                \frac{1}{|x|^2\log(|x|)} \text{ if }d=4\\
                \frac{1}{|x|^{d-2}} \text{ if }d\geq 5\\
                \frac{1}{|x|^2} \text{ if }d \leq 3
            \end{array}
        \right.
    \ee
    where the constants in $\asymp$ depend only on $d$ and $\mu$.
\end{Thm}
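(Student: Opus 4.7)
The plan is to reduce to first and second moment estimates for the counting variable $N := \#\{u \in T : X_u = 0\}$, using the many-to-one and many-to-two lemmas for critical branching random walks. The many-to-one identity gives
\[
    \E_x[N] \;=\; \sum_{n \geq 0} \Proba_x[S_n = 0] \;=\; G(x, 0),
\]
and together with the local CLT this produces $\E_x[N] \asymp |x|^{-(d-2)}$ in $d \geq 5$ and $\E_x[N] \asymp |x|^{-2}$ in $d = 4$. In $d \leq 3$ the Green function is infinite, and one instead truncates by the tree height $H$, exploiting the classical estimate $\Proba[H \geq n] \asymp 1/n$ for finite-variance critical GW trees.

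For $d \geq 5$, Markov's inequality directly gives the upper bound $\h(x) \leq \E_x[N] \asymp |x|^{-(d-2)}$; for the matching lower bound I would apply Paley--Zygmund and control $\E_x[N^2]$ via the many-to-two lemma, which expresses it up to constants as
\[
    \E_x[N^2] \;\asymp\; \E_x[N] \;+\; \sum_{y \in \Z^d} G(x, y)\, G(y, 0)^2.
\]
A direct convolution estimate using $G(\cdot,\cdot) \asymp |\cdot|^{-(d-2)}$ shows this is of order $G(x,0)^2$ in $d \geq 5$, so Paley--Zygmund yields $\h(x) \gtrsim |x|^{-(d-2)}$. In $d \leq 3$, one argues that $\{N \geq 1\}$ essentially forces $H \gtrsim |x|^2$ for the upper bound (the truncated Green function on the event $\{H < c|x|^2\}$ is negligible thanks to the Gaussian tail $\Proba_x[S_n=0] \leq C n^{-d/2} e^{-c|x|^2/n}$), while conditionally on $\{H \geq c|x|^2\}$ a second moment argument on the descendants alive near generation $|x|^2$ gives a uniformly positive probability of reaching $0$; combined with $\Proba[H \geq c|x|^2] \asymp |x|^{-2}$, this closes both sides.

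The case $d = 4$ is the most delicate and is where the logarithmic correction enters. The same convolution now produces a logarithmic divergence: a direct calculation splitting the integration into $|y| \ll |x|$, $|y| \asymp |x|$ and $|y| \gg |x|$ gives $\sum_y G(x,y) G(y,0)^2 \asymp |x|^{-2} \log|x|$, so Paley--Zygmund yields the sharp lower bound $\h(x) \gtrsim (|x|^2 \log|x|)^{-1}$. For the matching upper bound, one shows $\E_x[N \mid N \geq 1] \gtrsim \log|x|$ and then writes $\h(x) = \E_x[N]/\E_x[N \mid N \geq 1]$: once the walk first reaches $0$, the critical branching mechanism initiates a cascade of descendants whose visits to $0$ fill out a Green-type sum truncated at the scale $|x|^2$, producing on average $\log|x|$ returns. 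The main technical obstacle is to pin down this logarithm on both sides precisely; this is exactly the content of the sharp estimates in \cite{brw3} and \cite{Zhu}, and it is essentially dictated by the marginal recurrence of the SRW on $\Z^4$ combined with the heavy tail of the critical GW total progeny.
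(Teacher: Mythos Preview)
The paper does not prove this theorem: it is quoted as a result of Le~Gall--Lin \cite{brw3} and Zhu \cite{Zhu}, \cite{ZhuI}, so there is no in-paper argument to compare against. Your moment-method outline is indeed the standard route taken in those references, and the cases $d \geq 5$ (first moment for the upper bound, Paley--Zygmund via many-to-two for the lower bound) and the $d=4$ lower bound are correctly sketched; you also rightly flag the $d=4$ upper bound as the delicate step requiring the machinery of the cited papers.

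There is, however, a genuine gap in your $d \leq 3$ upper bound. You claim that on $\{H < c|x|^2\}$ the truncated first moment $\sum_{n<c|x|^2} p_n(x,0)$ is negligible thanks to the Gaussian tail $p_n(x,0)\le Cn^{-d/2}e^{-c'|x|^2/n}$, but a direct computation (substitute $n=|x|^2 s$) shows this sum is of order $C(c)\,|x|^{2-d}$ with $C(c)\to 0$ as $c\to 0$; in $d=1,2,3$ this is $|x|$, a constant, and $|x|^{-1}$ respectively, none of which is $o(|x|^{-2})$. Optimising over $c$ against the survival term $\Proba[H\ge c|x|^2]\lesssim (c|x|^2)^{-1}$ recovers at best $\h(x)\lesssim (\log|x|)/|x|^2$ in $d=3$ and nothing useful in $d\le 2$. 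The underlying issue is that in low dimensions the first moment of $N$ vastly overcounts: conditionally on $\{N\ge 1\}$ the tree-indexed walk typically makes many visits to $0$, so Markov's inequality on $N$ is far from sharp. The correct upper bound $\h(x)\lesssim |x|^{-2}$ in $d\le 3$ goes instead through a bound on the maximal displacement of the critical BRW, namely $\Proba_0[\max_{u\in T}|X_u|\ge r]\lesssim r^{-2}$, or equivalently through a direct analysis of the nonlinear recursion satisfied by $\h$; this is what is carried out in \cite{brw3}.
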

In \cite{Curien} (first lines of Section 3.2), it is stated that there would probably be an easier proof of Theorem \ref{thm.curien} if a result like Theorem \ref{thm.zhu} was proved. Such a proof was already given in \cite{ZhuII} but with a different argument. We give such a proof to provide intuition to the reader about the link between killed random walk and infinite snake. Before the proof, we need a last definition:
\begin{Def}[Truncated trees and tree-indexed SRW]
    We will write $T^{\infty}_n$ for the Kesten tree $T^{\infty}$ whose spine is cut at height $n$: the result is an (a.s.) 
finite tree made of the $n$ first vertices of the spine $u_0,...,u_{n-1}$ and the branches grafted to them $b_0,...,b_{n-1}$. Note that the height of this truncated tree can be bigger than $n$ because of the branches.
    We will also use $X_{T^{\infty}_n}$ for the restriction of the infinite snake to $T^{\infty}_n$.
\end{Def}
We now show how Theorems \ref{th.rec} and \ref{thm.quantitative} imply Theorem \ref{thm.curien}.
\begin{proof}
    \fix{We assume that $d \geq 3$ since in dimension $d \leq 2$, the spine alone is a SRW in $\Z^d$, hence it is recurrent so the infinite snake is also recurrent.}
    Using the remarks at the end of Definition \ref{def:snake}:
    \begin{equation}\label{eq.killed}
        \begin{aligned}
            \Proba_x\big[X_{T^{\infty}} \text{ never reaches }0\big] 
            &= \lim_{n \to \infty} \Proba_x\big[X_{T^{\infty}_n} \text{ never reaches }0\big]\\
            &= \lim_{n \to \infty} \E_x\left[\prod_{i=0}^{n-1} \Proba_{S_i}\left[X_{b_i} \text{ never reaches }0\right]\right].
        \end{aligned}
    \end{equation}
    where the expectation concerns the SRW $S_i$ and the probabilities concern the (independent) SRWs indexed by the multitype $\mu$-GW trees $b_i$.  Hence, if we write for all $x \in \Z^d$ 
    \be\label{eq:def:k}
        \K(x) = \Proba_x\left[X_{T^{\star}}\text{ reaches }0\right],
    \ee
    we can make the link between infinite snake and KRW:
    \be
        \Proba_x\big[X_{T^{\infty}} \text{never reaches }0\big] = \lim_{n \to \infty} \E_x\left[\prod_{i=0}^{n-1} (1-\K(S_i))\right] = \Proba_x[S^{\K} \text{ survives}].
    \ee
     Hence the infinite snake is recurrent if and only if the associated KRW $S^{\K}$ dies almost surely, and we just need to check the criterion of Theorem \ref{th.rec} for the particular killing function $\K(x)$. Since for all $i \geq 0$ the number of offspring in the first generation of $b_i$ has law $\mu^{\star}$ (which has finite mean and is a.s. $\geq 1$) and each offspring performs a SRW on a $\mu$-GW tree, Theorem \ref{thm.zhu} and a union bound (for the upper bound) imply for all $x$ and $i \geq 0$
    \be
        \K(x) = \Proba_x\left[X_{b_i}\text{ reaches }0\right] \asymp \h(x).
    \ee
   where the constants in $\asymp$ depend only on $d$ and $\mu$. On the one hand, if $d \geq 5$, using Theorem \ref{thm.zhu} we can find a constant $c$ such that 
    \be
        \K(x) \leq \frac{c}{|x|^{d-2}} = p(|x|).
    \ee
    \fix{The killing function $x \to p(|x|)$ satisfies~\eqref{eq:approx:rot:inv} and
    $
        \int_0^{\infty} rp(r) = \int_0^{\infty} \frac{c}{r^2} < \infty,
    $
    }
    hence the KRW \fix{is not trapped} and the infinite snake is transient.\\
    On the other hand, if $d \leq 4$, using Theorem \ref{thm.zhu} we can find a constant $c$ such that 
    \be
        \K(x) \geq \frac{c}{|x|^2\log(|x|)} = p(|x|).
    \ee
    \fix{The killing function $x \to p(|x|)$ satisfies~\eqref{eq:approx:rot:inv} and
    $
        \int_0^{\infty} rp(r) = \int_0^{\infty} \frac{c}{r\log(r)} = \infty
    $
    }
    so the KRW \fix{is trapped} and the infinite snake is recurrent.
\end{proof}
We now turn to the question that motivates this article: can we condition the infinite snake to \fix{avoid the origin}? In dimension $d \geq 5$, it is just conditioning by an event with positive probability. In dimension $d=4$, the infinite snake exhibits critical behavior by Theorem \ref{thm.zhu}: it is “almost transient” so we can hope to define the infinite snake conditioned to survive with a limiting procedure as in Definition \ref{def:conditioned:KRW}. The more natural way to condition the infinite snake to avoid the origin would be to condition the tree-indexed walk not to hit the origin before height $h$ (on the tree) and to let $h$ go to infinity. 
Intuitively, this limit is complex to identify because conditioning the walk also affects the law of the tree: it favors long and thin trees. Fortunately, there is another way to condition the walk to avoid the origin which makes use of the spinal structure of the Kesten tree and yields an object with a simple geometric description. \fix{Roughly speaking,} the procedure consists in conditioning the spine to hit $\Lambda_R^c$ \fix{“}before\fix{”} any of the branches grafted to the spine hits the origin, and then taking the limit $R \to \infty$. To make it rigorous, we introduce a partial order, \fix{which differs from the “usual” partial order on rooted trees. Recall that $u_0, u_1, \dots$ denotes the spine of the Kesten tree}.
\begin{Def}[Partial order]\label{def:order} If $t \in \T^{\infty}$ is an infinite tree we introduce a partial order on $V(t)$: for $u,v \in b_i, b_j$, $u < v$ if $i<j$. If $x_{t}$ is a (deterministic) realization of the infinite snake ($t \in \T^{\infty}$ and $x:t \to \Z^d$) and $A \subset \Z^d$, we say that “$x_t$ hits $A$ before $0$” if there exists $j \geq 0$ such that $x_{u_j} \in A$ and for all $u \leq u_j$, $x_u \neq 0$.
\end{Def}

\begin{figure}\centering   
	\begin{overpic}[abs,unit=1mm,scale=1]{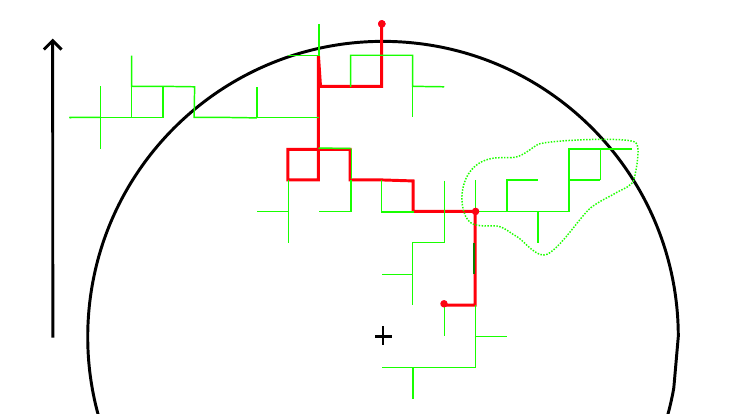}	
        \put(5,40){$R$}
        \put(61,13){$0$}
        \put(70,18){\color{red}$S_0$}
        \put(82,32){\color{red}$S_i$}
        \put(100,30){\color{green}$x_{t_i}$}
        \put(60,68){\color{red}$S_{\tau(B(r)^c)}$}
        \end{overpic}     
        \caption{An example of tree-indexed random walk which escapes the ball $B(R)$ before hitting $0$ in the sense of Definition \ref{def:order}, in dimension $2$. Only the values of the snake truncated at $\tau(B(R)^c)$ are drawn. The values of the SRW performed by the spine are drawn in red, the values of the BRW performed by the branches are in green. Observe that some of the branches can escape $B(R)$, but none of them hits $0$.}
        \label{fig:snake}
\end{figure}
With this definition, we are ready to prove our main theorem on the infinite snake in critical dimension $d=4$.
\begin{Thm}\label{thm:cv:snake}
    Let $\Lambda$ be any exhaustion of $\Z^4$ and $\mu$ be a critical offspring distribution with finite variance. The infinite snake conditioned to hit $\Lambda_R^c$ before $0$ (in the sense of Definition \ref{def:order}) converges in distribution when $R \to \infty$ towards a limiting process which takes values almost surely in $(\Z^4)^{\T^{\infty}}$. More precisely, there exists a function $a: \Z^4 \to \R$ positive on $\Z^4 \setminus \{0\}$ such that for all $n \in \N$, for all $s_0, ..., s_n \in \Z^4 \setminus \{0\}$, for all $t_0, ..., t_{n-1} \in \T$, for all $x_{t_0}: t_0 \to \Z^4 \setminus\{0\}$, ..., $x_{t_{n-1}}: t_{n-1} \to \Z^4 \setminus\{0\}$ with $x_{t_i}(u_i) = s_i$ for all $0 \leq i \leq n-1$:
    \begin{equation}\label{eq:cv:snake}
        \begin{aligned}
            &\Proba_x[\forall 0 \leq i \leq n, S_i = s_i, \forall 0 \leq i \leq n-1, b_i = t_i, X_{b_i} = x_{t_i}|X_{T^{\infty}} \text{ hits }\Lambda_R^c \text{ before }0]\\
            &\overset{R \to \infty}{\longrightarrow} \frac{a(s_n)}{a(s_0)} \Proba[S^{\K}_0 = s_0, ..., S^{\K}_n = s_n] \prod_{i=0}^{n-1} \Proba_{s_i}[X_T = x_{t_i}~|~\text{$X_T$ does not hit }0],
        \end{aligned}
  \end{equation}
  where $S^{\K}$ and $X_T$ are respectively a $4$-dimensional KRW with the killing function of~\eqref{eq:def:k} and a SRW indexed by a multitype $\mu$-GW tree.
\end{Thm}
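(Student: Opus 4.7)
The plan is to reduce Theorem \ref{thm:cv:snake} to Theorem \ref{thm.cv} applied to the killing function $\K$ defined in \eqref{eq:def:k}. By Theorem \ref{thm.quantitative}, in dimension $d=4$ one has $\K(x) \asymp 1/(|x|^2 \log|x|) = o(|x|^{-2})$, so Theorem \ref{thm.cv} supplies a function $a$ on $\Z^4$ that is positive on $\EK = \Z^4 \setminus \{0\}$ and satisfies the ratio limit \eqref{eq:cv:ratio} along every exhaustion.

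The first step is to observe that, in terms of the spine/bush decomposition of the Kesten tree, the event $\{X_{T^\infty} \text{ hits } \Lambda_R^c \text{ before } 0\}$ is literally the KRW survival event. Conditionally on the spine SRW $(S_i)_{i \geq 0}$, the pairs $(b_i, X_{b_i})_{i \geq 0}$ are mutually independent, each $X_{b_i}$ being distributed as a SRW on a multitype $\mu$-GW tree started at $S_i$. By Definition \ref{def:order} and the convention $\K(0) = 1$, the event is, up to a null set, that the spine reaches $\Lambda_R^c$ at some a.s.\ finite time $\tau_R$ and that none of $X_{b_0}, \dots, X_{b_{\tau_R-1}}$ hits $0$. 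Conditioning on the spine yields
\begin{equation}
\Proba_x[X_{T^\infty} \text{ hits } \Lambda_R^c \text{ before } 0] = \E_x\!\left[\prod_{i=0}^{\tau_R-1}(1-\K(S_i))\right] = \Proba_x[\tau(\Lambda_R^c) < \tau(\Delta)].
\end{equation}

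The second step is to factorize the joint probability of a cylinder. For data $(s_i), (t_i), (x_{t_i})$ as in the theorem, the same decomposition together with the simple Markov property of the spine at time $n$ gives
\begin{equation}
\begin{aligned}
\Proba_x[&\text{the data up to step } n,\, X_{T^\infty} \text{ hits } \Lambda_R^c \text{ before } 0] \\
&= \Proba[S_0 = s_0, \dots, S_n = s_n]\,\prod_{i=0}^{n-1}\Proba_{s_i}[b_i = t_i, X_{b_i} = x_{t_i}]\,\Proba_{s_n}[\tau(\Lambda_R^c) < \tau(\Delta)],
\end{aligned}
\end{equation}
the last factor coming from applying the previous step to a fresh copy started at $s_n$. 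Since each $x_{t_i}$ avoids $0$, one can write $\Proba_{s_i}[b_i = t_i, X_{b_i} = x_{t_i}] = (1-\K(s_i))\,\Proba_{s_i}[X_T = x_{t_i} \mid X_T \text{ does not hit } 0]$, and absorbing $\prod_{i=0}^{n-1}(1-\K(s_i))$ into the spine factor turns the SRW probability into the KRW probability $\Proba[S^\K_0 = s_0, \dots, S^\K_n = s_n]$.

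The final step is to divide by the survival probability from the first step and let $R \to \infty$. All $R$-dependence is then concentrated in the ratio $\Proba_{s_n}[\tau(\Lambda_R^c) < \tau(\Delta)]/\Proba_{s_0}[\tau(\Lambda_R^c) < \tau(\Delta)]$, which by Theorem \ref{thm.cv} tends to $a(s_n)/a(s_0)$ independently of the exhaustion, producing the right-hand side of \eqref{eq:cv:snake}. The only substantive ingredient is Theorem \ref{thm.cv}; the main obstacle in executing the plan is keeping careful track of the partial order in Definition \ref{def:order} so that the branches contributing to the killing factor are exactly $b_0, \dots, b_{\tau_R-1}$, which is what turns the snake-avoids-origin event into a KRW survival event.
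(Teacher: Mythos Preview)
Your proposal is correct and follows essentially the same route as the paper: identify the snake-survival event with the KRW event $\{\tau(\Lambda_R^c)<\tau(\Delta)\}$ via the spine/bush decomposition, factorize the cylinder probability by the Markov property at time $n$, absorb the factors $(1-\K(s_i))$ into the KRW law, and apply Theorem~\ref{thm.cv} to the remaining ratio. The only minor omission is that Theorem~\ref{thm.quantitative} gives the asymptotics of $\h(x)$ for the ordinary $\mu$-GW tree, not directly for the multitype tree defining $\K$; the paper bridges this with the union bound $\K(x)\asymp \h(x)$ already established in the proof of Theorem~\ref{thm.curien}.
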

\begin{Rem}
    Observe that the conditioning appearing in the right-hand side of \eqref{eq:cv:snake} is by an event of positive probability since the multitype critical $\mu$-GW tree $T$ is almost surely finite. In particular the conditional probability could be written explicitly in terms of $\mu$.
\end{Rem}
We now prove this theorem: this is a straightforward consequence of Theorem \ref{thm.cv}, Theorem \ref{thm.zhu} and the interpretation of the infinite snake in terms of KRW used in the proof we gave of Theorem \ref{thm.curien}.
\begin{proof}
    As in \eqref{eq:def:k}, we denote by $\K(x)$ the probability that a SRW indexed by a multitype $\mu$-GW tree started at $x$ ever hits the origin, and $S^{\K}$ the associated KRW. Observe that $\K(x) = 1$ if and only if $x=0$ so $\EK = \Z^4 \setminus \{ 0 \}$ is connected. Recall that $\tau(\Delta)$ denotes the killing time of $S^{\K}$. First of all, 
    \be\label{eq:snake:denumerator}
        \begin{aligned}
            \Proba_x[X_{T^{\infty}} \text{ hits }\Lambda_R^c \text{ before }0] 
            &= \sum_{n \geq 0} \Proba_x[S_n \in \Lambda_R^c,~\forall i < n,~X_{b_i} \text{ does not hit }0]\\
            &= \sum_{n \geq 0} \E_x\bigg[\mathbbm{1}_{S_n \in \Lambda_R^c}\prod_i^{n-1} (1-\K(S_i))\bigg]\\
            &= \Proba_x[\tau(\Lambda_R^c) < \tau(\Delta)].
        \end{aligned}
    \ee
    Hence,
    \be\label{eq:snake:numerator}
        \begin{aligned}
            &\Proba_x[\forall 0 \leq i \leq n, S_i = s_i, \forall 0 \leq i \leq n-1, b_i = t_i, X_{b_i} = x_{t_i} \cap X_{T^{\infty}} \text{ hits }\Lambda_R^c \text{ before }0]\\
            &= \Proba_x[\forall i \leq n, S_i = s_i ]\Bigg(\prod_{i=0}^{n-1}\Proba_{s_i}[X_{T^{\star}}=x_{t_i}]\Bigg)\Proba_{s_n}[X_{T^{\infty}} \text{ hits }\Lambda_R^c \text{ before }0]\\
            &= \Proba_x[\forall i \leq n, S_i = s_i ]\Bigg(\prod_{i=0}^n\Proba_{s_i}[X_{T^{\star}}=x_{t_i}]\Bigg)\Proba_{s_n}[\tau(\Lambda_R^c) < \tau(\Delta)].
        \end{aligned}   
    \ee
    Since in dimension $4$, Theorem \ref{thm.zhu} implies that the killing function $\K$ satisfies the hypothesis of Theorem \ref{thm.cv}, there exists a function $a$ positive on $\Z^4 \setminus \{0\}$ such that for all $x,y \in \Z^d$:
    \be
        \frac{\Proba_y[\tau(\Lambda_R^c) < \tau(\Delta)]}{\Proba_x[\tau(\Lambda_R^c) < \tau(\Delta)]} \overset{R \to \infty}{\longrightarrow} \frac{a(y)}{a(x)}.
    \ee
    Finally, taking ratio of \eqref{eq:snake:numerator} and \eqref{eq:snake:denumerator}:
    \be
        \begin{aligned}
            \Proba_x[\forall 0 \leq i &\leq n, S_i = s_i, \forall 0 \leq i \leq n-1, b_i = t_i, X_{b_i} = x_{t_i}|X_{T^{\infty}} \text{ hits }\Lambda_R^c \text{ before }0]\\
            \overset{R \to \infty}{\longrightarrow} &\frac{a(s_n)}{a(s_0)}\Proba_x[\forall i \leq n, S_i = s_i ]\Bigg(\prod_{i=0}^{n-1}\Proba_{s_i}[X_{T^{\star}}=x_{t_i}]\Bigg)\\
            =&\frac{a(s_n)}{a(s_0)}\Proba_x[\forall i \leq n, S_i = s_i ]\left(\prod_{i=0}^{n-1} (1-\K(s_i))\right)\Bigg(\prod_{i=0}^{n-1}\frac{\Proba_{s_i}[X_{T^{\star}}=x_{t_i}]}{1-\K(s_i)}\Bigg)\\
            =&\frac{a(s_n)}{a(s_0)}\Proba_x[\forall i \leq n, S^{\K}_i = s_i]\prod_{i=0}^{n-1}\Proba_{s_i}[X_{T^{\star}}=x_{t_i}|X_{T^{\star}} \text{ does not reach }0],
        \end{aligned}
    \ee
    finishing the proof.
\end{proof}

\begin{Qu}
    As we have already mentioned just before Definition \ref{def:order}, the conditioning in Theorem \ref{thm:cv:snake} is not the \fix{most} intuitive one. We conjecture that this limiting procedure coincides with any other natural limiting procedure, for example conditioning the tree-indexed SRW not to hit $0$ before height $h$ and letting $h \to \infty$. The infinite snake is the BRW conditioned to be infinite, so in Theorem \ref{thm:cv:snake} we first condition the BRW to be infinite to get the infinite snake and then we condition the infinite snake never to reach $0$. It would be interesting to know if we get the same limit by first conditioning the BRW to avoid the origin (which has positive probability) and then conditioning to be larger and larger.
\end{Qu}
\begin{Qu}
    In dimension $d \leq 3$, there is no critical behavior: the asymptotic decay of the killing function $\K$ given by \ref{thm.zhu} does not enable us to use either Theorem \ref{thm.cv} or Theorem \ref{thm.counterex} so we do not know if the infinite snake conditioned to avoid the origin is well-defined. We conjecture that the limit in distribution in Theorem \ref{thm:cv:snake} exists but that it depends on the exhaustion. 
\end{Qu}


\section{Existence of the conditioned random walk}
\label{proof3d}
This section is dedicated to the proof of Theorem \ref{thm.cv} in dimension $d \geq 3$.

\subsection{Notations and preliminaries}
\fix{Let} $d \geq 3$. Let $(\Lambda_R)_{R \in \N}$ be any exhaustion of $\Z^d$ which will be fixed throughout this section. We try to use as much as possible the notation of \cite{Zhu} since we build on several of their auxiliary lemmas. In what follows, $g(x,y)=g(0,x-y)=g(x-y)$ stands for the usual Green function of the SRW in $\Z^d$\fix{, defined in~\eqref{eq:def:green}}. A path $u_1,..., u_n$ of length $n$ is a sequence of vertices of $\Z^d$ such that $u_i \sim u_{i+1}$ for all $i$. For $u \in \Z^d$ and $A \subset \Z^d$, we will write $\gamma: u \to A$ if $u_1 = u$ and $u_n \in A$. If $\gamma$ is a path of length $|\gamma|$, we will denote by $\s(\gamma)=(2d)^{-|\gamma|}$ its probability weight for the SRW and $\B(\gamma) = \s(\gamma) \prod_{i = 0}^{|\gamma|-1}\big(1-\K(\gamma (i))\big)$ its probability weight with respect to the KRW. The Green function of the KRW can be expressed conveniently with these notations: for $v, w \in \Z^d$, 
\be
    G^{\K}(v,w) = \sum_{n = 0}^{\infty} \Proba_v [S^{\K}_n = w] = \sum_{\gamma: v \to w} \B(\gamma)
\ee
(the sum converges because it is dominated by the Green function of the SRW). We also define the harmonic measure associated to the KRW. 
For $\gamma$ a path, $B \subset \Z^d$, we write $\gamma \sqsubset B$ if all the vertices of $\gamma$ belong to $B$ except maybe the last one. 
Then, we define the killed harmonic measure for $B \subset \Z^d$, $v \in B, w \in B \cup \partial B$,  
\be
    H_B^{\K}(v, w) = \sum_{\gamma: v \to w, \gamma \sqsubset B} \B(\gamma). 
\ee
We will often use the following compact notation: $G^{\K}(v,C) = \sum_{w \in C} G^{\K}(v,w)$ for $v \in \Z^d$ and $C \subset \Z^d$, and similarly for $H_B^{\K}(v, C)$, $G(v,C)$ and $H_B(v,C)$ (the last two refer to the SRW).\par
For all $x \in \Z^d$, $r_1< r_2 \in \R_+$ and $R$ large enough so that $B(r_2) \subset \Lambda_R$, we will split the probability $\Proba_x[\tau(\Lambda_R^c) < \tau(\Delta)]$ in several terms according to $\tau(r_1)$ the first time of exit of $B(r_1)$, $T_2$ the time of last visit to $B(r_2)$ before exiting $\Lambda_R$ and $\tau(\Lambda_R^c)$:
\begin{equation}\label{decomposition}
    \begin{aligned}
        \Proba_x\big[\tau(\Lambda_R^c) < \tau(\Delta)\big]
        &= \sum_{\gamma: x \to \Lambda_R^c, \gamma \sqsubset \Lambda_R} \B(\gamma)\\
        &= \sum_{v \in \partial B(r_1)} \sum_{w \in \partial B(r_2)} H_{B(r_1)}^{\K}(x,v) H_{\Lambda_R}^{\K}(v,w) H_{\Lambda_R \setminus B(r_2)}^{\K}(w, \Lambda_R^c).
    \end{aligned}
\end{equation}
This decomposition is represented on the left of Figure \ref{fig:split}.

\begin{figure}\centering   
	\begin{overpic}[abs,unit=1mm,scale=0.4]{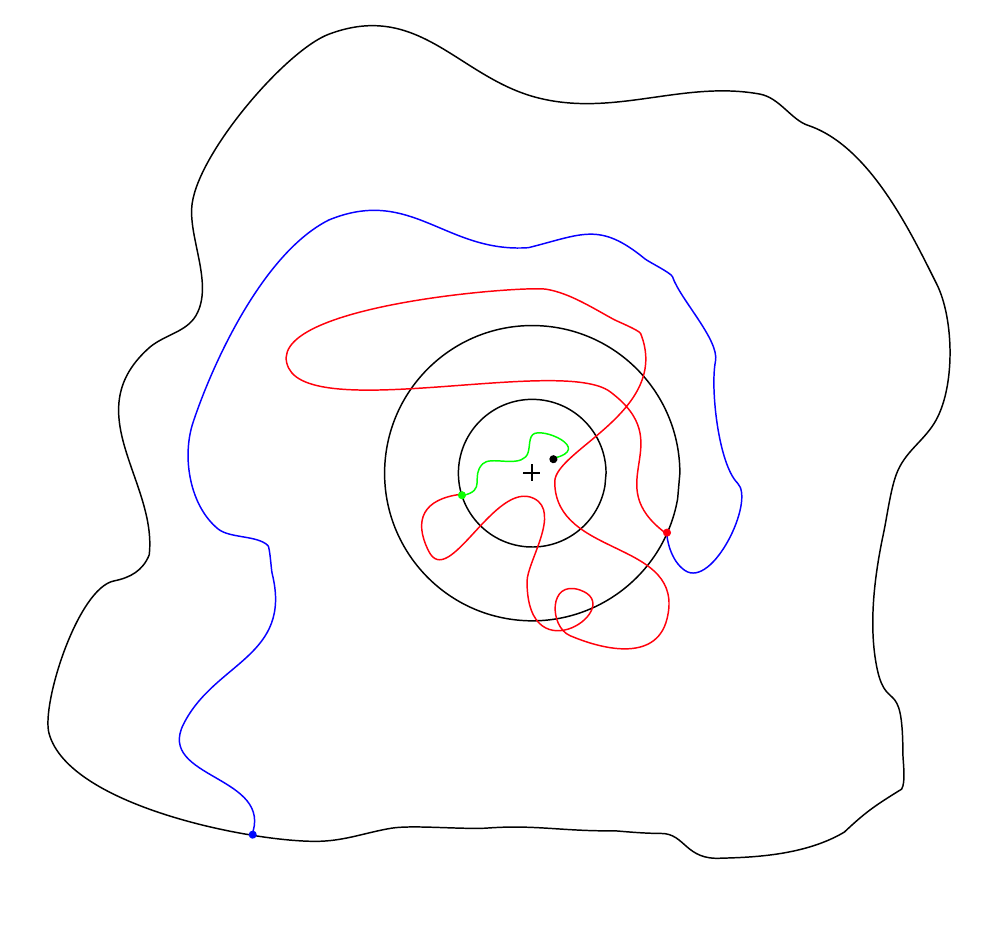}
            \put(23,33){\color{green}$S_{\tau(r_1)}$}
            \put(46,28){\color{red}$S_{T_2}$}
            \put(17,4){\color{blue}$S_{\tau(\Lambda_R^c)}$}
            \put(10,58){$\Lambda_R^c$}
        \end{overpic}     
	\begin{overpic}[abs,unit=1mm,scale=0.4]{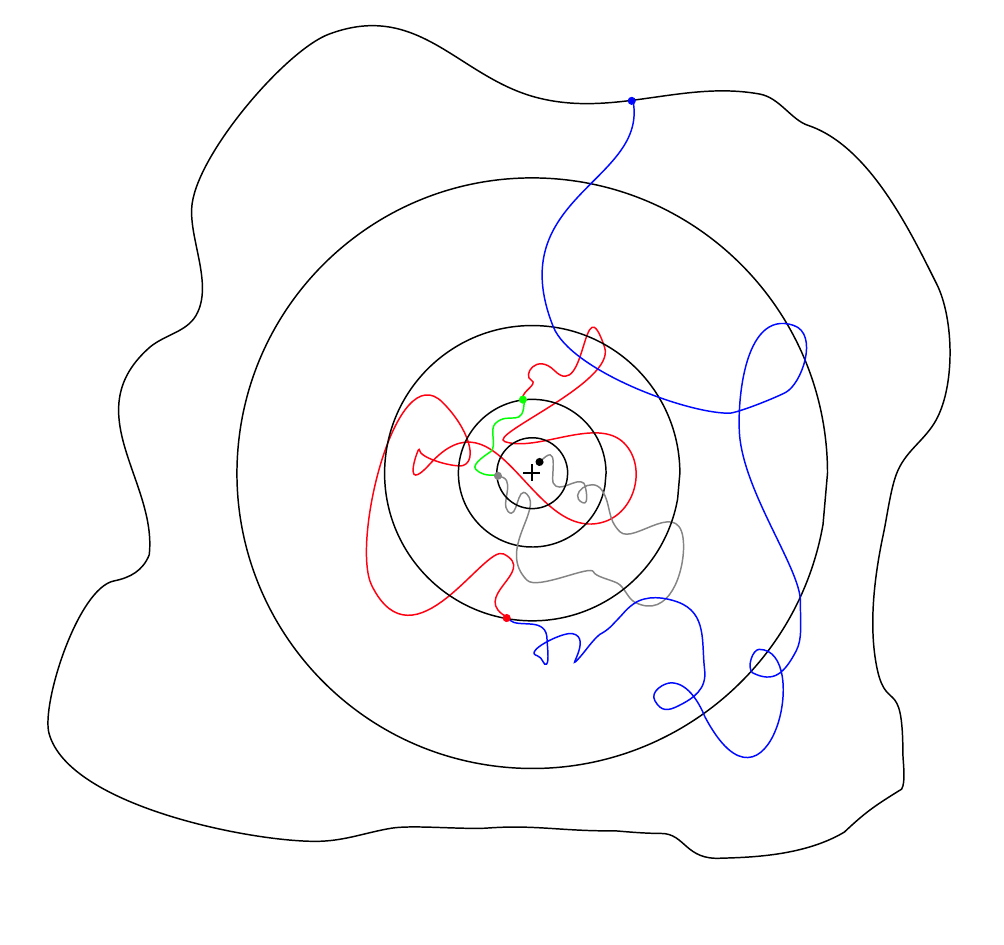}
            \put(29,29){\color{gray}$S_{T_0}$}
            \put(30,38){\color{green}$S_{T_1}$}
            \put(29,20){\color{red}$S_{T_2}$}
            \put(38,60){\color{blue}$S_{\tau(\Lambda_R^c})$}
            \put(10,58){$\Lambda_R^c$}
        \end{overpic}
        \caption{On the left: a path is split as in~\eqref{decomposition}: the concentric balls are $B(r_1) \subset B(r_2)$ (the drawing should be in $d \geq 3$). On the right: a path is split as in~\eqref{decomposition2d}: the concentric balls are $B(r_0) \subset B(r_1) \subset B(r_2) \subset B(r_3)$. Both in $d \geq 3$ and $d=2$, the proof of Theorem~\ref{thm.cv} relies on the fact that if $r_2/r_1$ is large enough, the red portion of the path forgets its starting point (the green bullet).}
        \label{fig:split}
\end{figure}

\subsection{Auxiliary results}
We first give a standard estimate on the Green function in dimension $d$, which can be found for example in Theorem 1.5.4 of \cite{Lawler}: 
\begin{Lem}\label{lem:green:estimate}
    There exists a constant $a_d$ depending only on the dimension such that
        \be\label{eq:green:estimate}
            g(u) \overset{|u|\to \infty}{\sim}a_d |u|^{2-d}
        \ee
\end{Lem}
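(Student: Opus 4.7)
The plan is to prove this via Fourier analysis. Recall that the Green function of the simple random walk on $\Z^d$ with $d\geq 3$ admits the representation
\begin{equation}
    g(u) = \frac{1}{(2\pi)^d}\int_{[-\pi,\pi]^d} \frac{e^{-ik\cdot u}}{1-\phi(k)}\,dk, \qquad \phi(k) := \frac{1}{d}\sum_{j=1}^d \cos k_j,
\end{equation}
which follows from summing the Fourier representations of the $n$-step transition probabilities and using that $\sum_n \phi(k)^n = (1-\phi(k))^{-1}$. The integrand is smooth on $[-\pi,\pi]^d \setminus \{0\}$ and, since $1-\phi(k) = |k|^2/(2d) + O(|k|^4)$ near the origin, the singularity at $k=0$ is integrable exactly because $d \geq 3$. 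The asymptotic behavior of $g(u)$ as $|u|\to\infty$ is governed entirely by this singularity.

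First I would localize the singularity: fix a small $\delta > 0$ and split the integral into the regions $\{|k| \leq \delta\}$ and $\{|k| > \delta\}$. On the second region, $1-\phi(k)$ is bounded away from zero and the integrand is smooth, so repeated integration by parts in $k$ (using $e^{-ik\cdot u} = |u|^{-2}\,\Delta_k e^{-ik\cdot u}$ up to a constant) yields a contribution that is $O(|u|^{-N})$ for every $N$, hence negligible compared to $|u|^{2-d}$.

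Next I would rescale the singular region by substituting $k = k'/|u|$, which turns the remaining integral into
\begin{equation}
    \frac{1}{(2\pi)^d\,|u|^{d}}\int_{|k'|\leq \delta|u|} \frac{e^{-ik'\cdot \hat u}}{1-\phi(k'/|u|)}\,dk', \qquad \hat u := u/|u|.
\end{equation}
Using $1-\phi(k'/|u|) = |k'|^2/(2d|u|^2) + O(|k'|^4/|u|^4)$ pointwise, the integrand should converge to $2d\,|u|^2\,e^{-ik'\cdot \hat u}/|k'|^2$, producing the desired prefactor $|u|^{2-d}$. The constant $a_d$ is then identified as
\begin{equation}
    a_d = \frac{2d}{(2\pi)^d}\int_{\R^d} \frac{e^{-ik'\cdot \hat u}}{|k'|^2}\,dk',
\end{equation}
which is a finite, $\hat u$-independent quantity by rotational invariance and by the standard fact that the (distributional) Fourier transform of $|k|^{-2}$ on $\R^d$ is a constant multiple of $|x|^{2-d}$, evaluated here at the unit vector $\hat u$.

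The main obstacle is justifying the exchange of limit and integral in this last step, because the limiting integrand $|k'|^{-2}$ is only conditionally integrable at infinity (particularly delicate in $d=3$). I would handle this by separating $|k'|\leq 1$ and $|k'|>1$: on the bounded region, dominated convergence applies using the uniform lower bound $1-\phi(k) \geq c|k|^2$ on $[-\pi,\pi]^d$; on the unbounded region, oscillation must be exploited through integration by parts, turning the $e^{-ik'\cdot \hat u}$ factor into additional decay in $|k'|$ to make the tail integrable. Once this interchange is justified, combining the negligible ``outer'' piece with the rescaled ``inner'' piece yields the claimed equivalent $g(u) \sim a_d\,|u|^{2-d}$.
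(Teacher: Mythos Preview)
The paper does not prove this lemma at all; it simply cites it as Theorem 1.5.4 of Lawler's book \cite{Lawler} and moves on. Your Fourier-analytic outline is the standard route to this classical result and is correct in its broad strokes, so in that sense you have done more than the paper requires; but be aware that the justification you flag as the ``main obstacle'' (the conditionally integrable tail after rescaling, especially in $d=3$) is genuinely the nontrivial part, and a fully rigorous version of the integration-by-parts step would take some care to write out --- in practice one often bypasses it by first comparing $1/(1-\phi)$ to $2d/|k|^2$ as an additive error of lower order rather than passing to the limit inside the integral.
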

We gather a few definitions and facts from the proof of Lemma 3.1 of \cite{Zhu}. We define for $v, w \in \Z^d$, for $A > 0$ and $0 < \alpha < 0.5$
\begin{equation}
    \left\{
        \begin{array}{ll}
            \Gamma_1(A, v, w) = \{ \gamma: v \to w~|~|\gamma| \geq A |w-v|^2 \}\\
            \Gamma_2(\alpha, v, w) = \{ \gamma: v \to w~|~\gamma \cap B(\alpha |v|) \neq \emptyset \}
        \end{array}
    \right.
\end{equation}
The following holds:
\begin{Lem}\label{lem:zhu}
    \begin{equation}\label{limA}
        \sup_{v, w \in \Z^d} \frac{1}{g(v,w)}\sum_{\gamma \in \Gamma_1(A, v, w)} \s(\gamma) \overset{A \to \infty}{\longrightarrow} 0.
    \end{equation}
    \begin{equation}\label{limalpha}
        \sup_{v, w \in \Z^d, |w| \geq |v|} \frac{1}{g(v,w)}\sum_{\gamma \in \Gamma_2(\alpha, v, w)} \s(\gamma) \overset{\alpha \to 0}{\longrightarrow} 0
    \end{equation}
\end{Lem}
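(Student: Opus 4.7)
The proof splits cleanly into two independent arguments, one for each limit.

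For the bound \eqref{limA}, I would convert the path sum into a time sum using the identity $\sum_{\gamma:v\to w,\,|\gamma|=n}\s(\gamma)=\Proba_v[S_n=w]$, giving
\begin{equation}
  \sum_{\gamma\in\Gamma_1(A,v,w)}\s(\gamma)=\sum_{n\geq \lceil A|w-v|^2\rceil}\Proba_v[S_n=w].
\end{equation}
The local central limit theorem (or the standard Gaussian heat kernel bound on $\Z^d$) yields $\Proba_v[S_n=w]\leq C n^{-d/2}$ uniformly in $v,w$. Summing this tail and using $d\geq 3$ produces $C'A^{1-d/2}|w-v|^{2-d}$. Dividing by $g(v,w)\geq c|w-v|^{2-d}$ from Lemma~\ref{lem:green:estimate} (with the case $v=w$ handled separately via $g(0)>0$) gives a uniform bound $C''A^{1-d/2}\to 0$. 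This part is straightforward once one commits to LCLT-type bounds.

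For the bound \eqref{limalpha}, the key idea is to decompose each path in $\Gamma_2(\alpha,v,w)$ at its first entry to $B(\alpha|v|)$. Setting $\tau=\inf\{n\geq 0 : S_n\in B(\alpha|v|)\}$, the strong Markov property yields
\begin{equation}
  \sum_{\gamma\in\Gamma_2(\alpha,v,w)}\s(\gamma)=\E_v\bigl[\mathbf{1}_{\tau<\infty}\,g(S_\tau,w)\bigr].
\end{equation}
Since $|w|\geq |v|$ and $|S_\tau|\leq \alpha|v|$, for $\alpha\leq 1/2$ we get $|w-S_\tau|\geq (1-\alpha)|w|\geq |w|/2$, hence $g(S_\tau,w)\leq C|w|^{2-d}$ by Lemma~\ref{lem:green:estimate}; and symmetrically $g(v,w)\geq c|w|^{2-d}$ since $|w-v|\leq 2|w|$. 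The classical capacity estimate for the SRW in $d\geq 3$, namely $\Proba_v[\tau_{B(r)}<\infty]\leq C(r/|v|)^{d-2}$, applied with $r=\alpha|v|$ gives $\Proba_v[\tau<\infty]\leq C\alpha^{d-2}$. Combining these pieces bounds the ratio by $C'\alpha^{d-2}\to 0$ as $\alpha\to 0$, uniformly over $|v|$ large enough. The case of bounded $|v|$ is easily separated: for $\alpha$ small enough $B(\alpha|v|)$ is empty (or the sup is attained on the large-$|v|$ regime anyway, since this is how the lemma will be applied in the rest of the paper).

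The main obstacle is the uniform hitting probability estimate $\Proba_v[\tau_{B(\alpha|v|)}<\infty]\leq C\alpha^{d-2}$. It can be established either via the inequality $\Proba_v[\tau_A<\infty]\leq \sum_{u\in A}g(v,u)/g(0)$ combined with the asymptotics of $g$ and a sum-to-integral comparison over the ball $B(\alpha|v|)$, or by invoking the classical fact that the capacity of a discrete ball of radius $r$ in $\Z^d$ is of order $r^{d-2}$. This is the place where the dimension $d\geq 3$ genuinely enters, and it is the quantitative input adapted from \cite{Zhu}. Both estimates \eqref{limA} and \eqref{limalpha} together encode the fact that a typical SRW excursion from $v$ to $w$ has length $\asymp |w-v|^2$ and stays away from the origin at scale $|v|$.
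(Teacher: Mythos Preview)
Your proof is correct and follows essentially the same approach as the paper: for \eqref{limalpha} you use the same strong Markov decomposition at the first entrance to $B(\alpha|v|)$, the same Green function bounds $g(v,w)\geq c|w|^{2-d}$ and $g(S_\tau,w)\leq C|w|^{2-d}$, the same hitting estimate $\Proba_v[\tau_{B(\alpha|v|)}<\infty]\leq C\alpha^{d-2}$ (which the paper cites from \cite{Lawler}, Proposition~1.5.10), and the same observation that $B(\alpha|v|)$ is empty for small $|v|$. For \eqref{limA} the paper simply cites Lemma~2.3 of \cite{Zhu} without reproducing the argument, whereas you sketch the standard LCLT tail-sum proof; your parenthetical about handling $v=w$ via $g(0)>0$ is not quite right (the ratio equals $1$ there since $A|w-v|^2=0$), but this case never arises in the applications since $v\in\partial B(r_1)$ and $w\in\partial B(r_2)$ with $r_1<r_2$.
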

Equation \eqref{limA} is precisely the statement of Lemma 2.3 of \cite{Zhu}. Equation \eqref{limalpha} is almost the same as the statement in page 7 of \cite{Zhu} but we show how to adapt the proof for completeness.
\begin{proof}
    If $|w| \geq |v|$, using~\eqref{eq:green:estimate}, we can find $C>0$ depending only on $d$ such that
    \be
        g(v,w) \geq C|v-w|^{2-d} \geq C2^{2-d}|w|^{2-d}.
    \ee
On the other hand, by the strong Markov property and~\eqref{eq:green:estimate}, there exists $C'>0$ such that 
    \be
        \sum_{\gamma \in \Gamma_2(\alpha, v, w)} \s(\gamma) = \sum_{a \in B(\alpha|v|)}H_{B(\alpha|v|)^c}(v,a)g(a,w) \leq C'|w|^{2-d} \Proba_v[S \text{ hits }B(\alpha|v|)].
    \ee
    Estimating this probability is standard (see for example Proposition 1.5.10 of \cite{Lawler} when $m \to \infty$): there exists $C''>0$ such that
    \be
        \Proba_v[S \text{ hits }B(\alpha|v|)] \leq C''\frac{|v|^{2-d}}{(\alpha|v|)^{2-d}} = C''\alpha^{d-2}.
    \ee
    Note that this also holds when $\alpha|v| < 1$ because then $B(\alpha|v|)$ is empty. Putting these equations together proves Lemma \ref{lem:zhu}.
\end{proof}
The main technical step in the proof of Theorem \ref{thm.cv} will be to show that asymptotically the middle term $H_{\Lambda_R^c}^{\K}(v,w)$ in \eqref{decomposition} only depends on $r_1$ and $r_2$ (when $1 << r_1 << r_2$), and not on $v$ and $w$, which will allow us to factorise the expression into one term depending only on $x, r_1$ and another one depending only on $R, r_2$. 
In view of this, we state a lemma which is inspired by Lemma 3.1 of \cite{Zhu}:
\begin{Lem}\label{lemme3.1}
    Assume that $\K(x) = o(|x|^{-2})$. For all $\varepsilon > 0$, there exists $\rho_1(\eps)>0$ such that for all $r_1 \geq \rho_1(\eps)$, there exists $r_2$ such that for all $v \in \partial B(r_1)$, $w \in \partial B(r_2)$ and $R$ large enough:
    \begin{equation}
        1- \varepsilon \leq \frac{H^{\K}_{\Lambda_R^c}(v,w)}{a_d r_2^{2-d}} \leq 1 + \varepsilon.
    \end{equation}
\end{Lem}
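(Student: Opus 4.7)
My plan is to compare $H^{\K}_{\Lambda_R}(v,w)$ with the ordinary Green function $g(v,w)$, which by Lemma~\ref{lem:green:estimate} equals $(1+o(1))a_d|v-w|^{2-d}$ and, since $|v-w|\in[r_2-r_1,r_2+r_1]$, also equals $(1+o(1))a_d r_2^{2-d}$ whenever $r_1/r_2$ is small. The upper bound is immediate: dropping the killing factors $1-\K(\cdot)$ and the restriction $\gamma\sqsubset\Lambda_R$ only enlarges the sum, so $H^{\K}_{\Lambda_R}(v,w)\leq g(v,w)\leq(1+\eps)a_d r_2^{2-d}$ once $r_2/r_1$ is chosen large enough, uniformly in $R$.

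For the lower bound I will restrict the sum defining $H^{\K}_{\Lambda_R}(v,w)$ to \emph{good} paths $\gamma\colon v\to w$, namely paths of length at most $A|v-w|^2$ that avoid the ball $B(\alpha r_1)$, where $A$ is large and $\alpha$ small, both to be chosen. Lemma~\ref{lem:zhu} ensures that by taking $A$ large and $\alpha$ small (depending only on $\eps$ and $d$), the complementary bad paths in $\Gamma_1(A,v,w)\cup\Gamma_2(\alpha,v,w)$ carry total SRW weight at most $\eps g(v,w)$. Each good path has length at most $Ar_2^2$, so it lies in a ball of radius $r_2+Ar_2^2$ around the origin, and hence for $R$ large enough (depending on $r_2,A$) every good path sits inside $\Lambda_R$, making the restriction $\gamma\sqsubset\Lambda_R$ automatic. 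Moreover every vertex $\gamma(i)$ on a good path satisfies $|\gamma(i)|\geq\alpha r_1$; writing the hypothesis as $\K(u)\leq\eta(|u|)/|u|^2$ for a function $\eta(r)\to 0$, this gives
\[
    \prod_{i=0}^{|\gamma|-1}\bigl(1-\K(\gamma(i))\bigr)\geq\Bigl(1-\frac{\eta(\alpha r_1)}{(\alpha r_1)^2}\Bigr)^{Ar_2^2}\geq 1-\frac{A\,\eta(\alpha r_1)}{\alpha^2}\Bigl(\frac{r_2}{r_1}\Bigr)^2,
\]
which is $\geq 1-\eps$ as soon as $(r_2/r_1)^2\eta(\alpha r_1)\leq\eps\alpha^2/A$. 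Summing over good paths,
\[
    H^{\K}_{\Lambda_R}(v,w)\geq\sum_{\text{good }\gamma}\B(\gamma)\geq(1-\eps)\sum_{\text{good }\gamma}\s(\gamma)\geq(1-\eps)^2 g(v,w)\geq(1-\eps)^3 a_d r_2^{2-d}.
\]

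To close the argument I will fix $\eps>0$, pick $A$ and $\alpha$ from Lemma~\ref{lem:zhu}, and then a threshold $N_0=N_0(\eps)$ above which the Green asymptotic has error at most $\eps$; then choose $\rho_1(\eps)$ large enough that for every $r_1\geq\rho_1(\eps)$ the interval $[N_0,\,\alpha\sqrt{\eps/(A\,\eta(\alpha r_1))}]$ is non-empty, select any $r_2$ with $r_2/r_1$ in this interval, and finally take $R$ large enough that $\Lambda_R\supset B(r_2+Ar_2^2)$. The crux of the argument, and where I expect the main obstacle, is precisely this balancing of the ratio $r_2/r_1$: it must be large enough for the Green function asymptotic and for Lemma~\ref{lem:zhu} to give a sharp bound, yet small enough that the cumulative killing along a path of length up to $Ar_2^2$ remains negligible. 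This is exactly where the assumption $\K(x)=o(|x|^{-2})$, rather than merely $O(|x|^{-2})$, becomes essential: because $\eta(r)\to 0$, the upper endpoint $\alpha\sqrt{\eps/(A\eta(\alpha r_1))}$ of the admissible window for $r_2/r_1$ diverges as $r_1\to\infty$ and thus eventually exceeds the fixed lower threshold $N_0(\eps)$, so an acceptable $r_2$ exists.
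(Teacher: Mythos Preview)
Your proof is correct and follows essentially the same strategy as the paper's: both arguments obtain the upper bound trivially from $H^{\K}_{\Lambda_R}(v,w)\le g(v,w)$, and for the lower bound restrict to paths outside $\Gamma_1(A,v,w)\cup\Gamma_2(\alpha,v,w)$, invoke Lemma~\ref{lem:zhu} to discard the bad paths, and then use the length bound $|\gamma|\le A|v-w|^2$ together with $|\gamma(i)|\ge\alpha r_1$ to show the killing factor is at least $1-\eps$ once $r_1$ is large enough. The only cosmetic difference is that the paper fixes the ratio $r_2/r_1=1/\delta$ at the outset and then pushes $r_1$ large, whereas you leave an admissible window $[N_0,\alpha\sqrt{\eps/(A\eta(\alpha r_1))}]$ for $r_2/r_1$; the balancing you identify as the crux is exactly the same.
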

This lemma will be proved in the next subsection.

\subsection{Main proofs}\label{conditionedkrw}
We start by proving Lemma \ref{lemme3.1}.
\begin{proof}
    Recall that $H_{\Lambda_R^c}^{\K}(v, w) = \sum_{\gamma: v \to w, \gamma \sqsubset {\Lambda_R^c}} \B(\gamma)$. 
    The proof will consist in partitioning the paths from $v \to w$ in $\Gamma_1(A,v,w)$, $\Gamma_2(\alpha,v,w)$, and the rest and to show that for well-calibrated $\alpha$, $A$, $r_1$ and $r_2$, the contribution to $g(v,w)$ of the paths of $\Gamma_1(A,v,w)$, $\Gamma_2(\alpha,v,w)$ is negligible, while for the other paths the killing is negligible: $\B(\gamma) \approx \s(\gamma)$.\\
    Let $\eps >0$. Let $0 < \delta <1$. Let $r_1 > 0$ to be determined later and fix $r_2 = \frac{r_1}{\delta}$. By \ref{lem:zhu} we can choose $\alpha = \alpha(\delta) > 0$ and $A = A(\delta) > 0$ such that for all $r_1$ and $r_2 = \delta r_1$, if $v \in \partial B(r_1)$ and $w \in \partial B(r_2)$ ($|v|\leq |w|$ is satisfied since $\delta <1$) then
    \begin{equation}\label{gamma}
        \sum_{\gamma \in \Gamma_1(\alpha,v,w) \cup \Gamma_2(\alpha,v,w)} \s(\gamma) \leq \delta g(v,w)
    \end{equation}
  We are left with controlling the probabilities of the paths that belong neither to $\Gamma_1(A,v,w)$ nor to $\Gamma_2(\alpha, v,w)$. These paths take their steps only in the complementary of $B(\alpha r_1)$, so by the assumption that $\K(|x|) = o(|x|^{-2})$, for all $\eta > 0$, we can take $r_1$ large enough such that for $|x| \geq \alpha r_1$, $\K(x) \leq \frac{\eta}{\alpha^2 r_1^2}$. Now for any $v \in \partial B(r_1)$, $w \in \partial B(r_2)$, if $\gamma: v \to w \notin \Gamma_1(A,v,w) \cup \Gamma_2(\alpha,v,w)$ we have:

  \begin{equation}
        \begin{aligned}
            \frac{\B(\gamma)}{\s(\gamma)} 
            = \prod_{i=0}^{|\gamma|-1}\big(1-\K(\gamma(i))\big)
            \geq \left(1 - \frac{\eta}{\alpha^2 r_1^2}\right)^{|\gamma|}
            &= \exp\left(|\gamma| \ln \left(1 - \frac{\eta}{\alpha^2 r_1^2}\right)\right)\\
            &\geq \exp\left(A|w-v|^2\ln \left(1 - \frac{\eta}{\alpha^2 r_1^2}\right)\right).
        \end{aligned}
  \end{equation}
  where the last inequality comes from the fact that $\gamma \notin \Gamma_1(A, v, w)$. Using that $r_1= \delta r_2 = |v|\leq |w| = r_2$ and $\ln(1-\theta) \geq -2\theta$ for $\theta$ small enough, we get that for $r_1$ large enough,

  \begin{equation}
    A|w-v|^2\ln \left(1 - \frac{\eta}{\alpha^2 r_1^2}\right) \geq 4Ar_2^2\ln \left(1 - \frac{\eta}{\alpha^2 r_1^2}\right) \geq -\frac{8A\eta r_2^2}{\alpha^2r_1^2} \geq -\frac{8A\eta}{\alpha^2\delta^2}.
  \end{equation}
  This is true for all $\eta > 0$ and $r_1$ large enough once $A, \alpha, \delta$ are fixed so we can choose $\eta$ such that $\frac{8A\eta}{\alpha^2\delta^2} \leq \delta$ and now for all $r_1$ large enough, $r_2 = \frac{r_1}{\delta}$, for all $v \in \partial B(r_1)$, $w \in \partial B(r_2)$, $\gamma \notin \Gamma_1(A, v, w) \cup \Gamma_2(\alpha, v, w)$:

  \begin{equation}
    \label{b/s}
    \frac{\B(\gamma)}{\s(\gamma)} \geq \exp(-\delta) \geq 1 - \delta.
  \end{equation}
  Upon increasing $r_1$ we can also assume that for $|x| \geq r_1$,

  \begin{equation}
    \label{controlgreen}
    1-\delta \leq \frac{g(x)}{a_d|x|^{2-d}} \leq 1 + \delta.
  \end{equation}
  We put everything together: for $\delta > 0$, we chose $A = A(\delta)$, $\alpha = \alpha(\delta)$, $\rho_1 = \rho_1(A, \alpha, \delta)$ such that for all $r_1 \geq \rho_1$, if $r_2 = \frac{r_1}{\delta}$, $v \in \partial B(r_1)$, $w \in \partial B(r_2)$, \eqref{gamma} and \eqref{b/s} hold. Upon increasing $\rho_1$, \eqref{controlgreen} also holds. Observe that for $R$ large enough (depending on $r_1$), all the paths $\gamma \notin \Gamma_1(A,v,w)$ do not reach $\Lambda_B(R)^c$, so in particular if $\gamma \notin \Gamma_1(A,v,w) \cup \Gamma_2(\alpha,v,w) := \Gamma_{12}$, $\gamma \subset \Lambda_R$. Hence, for $R$ large enough,

  \begin{equation}\label{lbHK}
        \begin{aligned}
            H_{\Lambda_R}^{\K}(v,w) 
            = \sum_{\gamma: v \to w, \gamma \sqsubset \Lambda_R} \B(\gamma)
            \geq \sum_{\gamma: v \to w \notin \Gamma_{12}} \B(\gamma)
            \overset{\eqref{b/s}}{\geq} (1-\delta) \sum_{\gamma: v \to w \notin \Gamma_{12}} \s(\gamma)
            &\overset{\eqref{gamma}}{\geq} (1-\delta)^2g(v,w)\\
            &\geq \frac{a_d(1-\delta)^3}{(1+\delta)^{d-2}r_2^{d-2}}
    \end{aligned}
  \end{equation}
  where the last line is justified by \eqref{controlgreen} and the choice of $r_1 = \delta r_2$ so that $|w-v| \leq (1+\delta)r_2$. This is the lower bound of Lemma \ref{lemme3.1} if we choose $\delta = \delta(\eps)$ small enough: the choice of $\rho_1$ depends only on $\eps$ (via $\delta, A(\delta), \alpha(\delta)$), and then \eqref{hbHK} holds for all $r_1 \geq \rho_1(\eps)$, $r_2 = \frac{r_1}{\delta}$ and $R$ large enough compared to $r_1$.\\
  The upper bound is easier: for the same choice of $\delta, r_1, r_2 = \frac{r_1}{\delta}$ we can write for $v \in \partial B(r_1)$ and $w \in \partial B(r_2)$,
  \begin{equation}\label{hbHK}
        H_{\Lambda_R}^{\K}(v,w) 
        \leq g(v,w)
        \overset{\eqref{controlgreen}}{\leq} (1+\delta)a_d |w-v|^{2-d}
        \leq (1+\delta)(1-\delta)^{2-d}r_2^{2-d}
  \end{equation}
  by \fix{the triangle} inequality. This is the upper bound of Lemma \ref{lemme3.1} if $\delta = \delta(\eps)$ is small enough.
\end{proof}
Once this lemma is proved, the proof of Theorem \ref{thm.cv} is relatively straightforward.
\begin{proof}
    We start by proving the existence of the limit \eqref{prop.cv}. Let $x,y \in \EK$. Let $\varepsilon > 0$. Choose $\rho_1 = \rho_1(\eps)$ associated to $\varepsilon$ by Lemma \ref{lemme3.1}, and $r_1 \geq \max(\rho_1, |x|, |y|)$. Let $r_2$ and $R_0$ be associated to $r_1$ also by Lemma \ref{lemme3.1} such that \fix{for all} $v \in B(r_1)$, \fix{for all} $w \in B(r_2)$, \fix{for all} $R \geq R_0$,
    \begin{equation}\label{controlHK}
        1-\varepsilon \leq \frac{H_{\Lambda_R}^{\K}(v,w)}{g(v,w)} \leq 1 + \varepsilon.
    \end{equation}
    Upon increasing $R_0$, we can assume that for all $R \geq R_0$, $B(r_2) \subset \Lambda_R$ (by definition of an exhaustion). By using this inequality in \eqref{decomposition} (which was true for arbitrary $r_1 < r_2$ and $R$ such that $B(r_2) \subset \Lambda_R$), we now get, \fix{for all} $R > R_0$:
    \begin{equation}\label{eq.x.lb}
        \Proba_x[\tau(\Lambda_R^c) < \tau(\Delta)] \geq (1-\epsilon) a_d r_2^{2-d}  \left(\sum_{v \in \partial B(r_1)} H_{B(r_1)}^{\K}(x,v) \right) \left( \sum_{w \in \partial B(r_2)} H_{\Lambda_R \setminus B(r_2)}^{\K}(w, \partial \Lambda) \right)
    \end{equation}
    and
    \begin{equation}\label{eq.x}
      \Proba_x[\tau(\Lambda_R^c) < \tau(\Delta)] \leq (1+\epsilon) a_d r_2^{2-d}  \left(\sum_{v \in \partial B(r_1)} H_{B(r_1)}^{\K}(x,v) \right) \left( \sum_{w \in \partial B(r_2)} H_{\Lambda_R \setminus B(r_2)}^{\K}(w, \Lambda_R^c) \right).
    \end{equation}
    The same holds for $y$:
    \begin{equation}\label{eq.y}
        \Proba_y[\tau(\Lambda_R^c) < \tau(\Delta)] \leq (1+\epsilon) a_d r_2^{2-d}\left(\sum_{v \in \partial B(r_1)} H_{B(r_1)}^{\K}(x,v) \right) \left( \sum_{w \in \partial B(r_2)} H_{\Lambda_R \setminus B(r_2)}^{\K}(w, \Lambda_R^c) \right)
    \end{equation}
    We take the quotient of \eqref{eq.y} by \eqref{eq.x.lb} and  (the denominator is $>0$ because $x \in \EK$) and get \fix{for all} $R > R_0$:
    \begin{equation}
        \frac{\Proba_y[\tau(\Lambda_R^c) < \tau(\Delta)]}{\Proba_x[\tau(\Lambda_R^c) < \tau(\Delta)]} \leq \frac{1+\epsilon}{1-\epsilon} \frac{\sum_{v \in \partial B(r_1)} H_{B(r_1)}^{\K}(x,v)}{\sum_{v \in \partial B(r_1)} H_{B(r_1)}^{\K}(y,v)}.
    \end{equation}
    The same holds if we \fix{exchange} $x$ and $y$ (and take the inverse):
    \begin{equation}
        \frac{1-\epsilon}{1+\epsilon} \frac{\sum_{v \in \partial B(r_1)} H_{B(r_1)}^{\K}(x,v)}{\sum_{v \in \partial B(r_1)} H_{B(r_1)}^{\K}(y,v)} \leq \frac{\Proba_y[\tau(\Lambda_R^c) < \tau(\Delta)]}{\Proba_x[\tau(\Lambda_R^c) < \tau(\Delta)]}
    \end{equation}
    Recall that $x$ and $y$ are fixed. By taking \fix{logarithm} of these inequalities (which is possible since $x \in \EK$), we get that the sequence $u_R(x,y) = \ln \left(\frac{\Proba_y[\tau(\Lambda_R^c) < \tau(\Delta)]}{\Proba_x[\tau(\Lambda_R^c) < \tau(\Delta)]}\right)$ is a Cauchy sequence in $R$: for all $\varepsilon > 0$ there exists $R_0$ such that for all $R, R' > R_0$, $|u_R(x,y) - u_{R'}(x,y)| \leq \varepsilon$. Hence the sequence converges when $R \to \infty$. Besides, the upper and lower bound do not depend on the specific exhaustion, so the limit is independent of the exhaustion, which gives \eqref{prop.cv}.\\
    The rest of the theorem easily follows: for a fixed $x_0 \in \EK$, we can define for all $x \in \EK$:
  \begin{equation}
    \label{def.a}
    a(x) = \lim_{R \to \infty} \frac{\Proba_x[\tau(\Lambda_R^c) < \tau(\Delta)]}{\Proba_{x_0}[\tau(\Lambda_R^c) < \tau(\Delta)]}.
  \end{equation}
  If we define $a(x) =0$ for $x \in \EK^c$, \eqref{def.a} also holds for $x \in \EK^c$ since the quotient on the right-hand side is $0$ for $R$ large enough. This function is massive harmonic as a pointwise limit of massive harmonic functions, which concludes the proof of Theorem \ref{thm.cv}.
\end{proof}


\section{Counter-example}\label{counterex}
\fix{In this section, we introduce rigorously the killed Browinan motion (KBM) and we prove Theorem~\ref{thm.counterex}, which provides an example where the KBM conditioned to survive is not well-defined.}
In this section, $a, c >0$ are absolute constants tha\fix{t} can change from one line to the other. Also, in this section only, $B(r)$ \fix{and $\partial B(r)$} denote the \fix{E}uclidean ball of $\R^2$ \fix{and its boundary} (and not \fix{their} intersection\fix{s} with $\Z^d$), and for $A \subset \R^d$, $A^c := \R^d \setminus A$. 
\subsection{Preliminaries on the killed \fix{B}rownian motion}
\label{defkbm}
In this section we develop the continuous counterpart of the KRW by considering a Brownian particle evolving in a field of traps in $\R^d$ instead of a discrete particle evolving according to a random walk in $\Z^d$. 
The parameter of the model is again a killing function, $\K: \R^d \to \R_+$, which we will assume here to be continuous and bounded. This killing function gives the rate at which the Brownian particle dies depending on its position. 
Since we are typically interested in the behavior of the killed \fix{B}rownian motion (KBM) when the killing vanishes at infinity, we will assume for convenience that $\K$ is bounded.\par

We mentioned a possible approach to define the KBM in the introduction of the paper. It consists in defining the KBM as a (sub)diffusion process on $\R^d$ with generator $A$ defined by \eqref{def:generator:KBM}. This defines a sub\fix{-M}arkovian process which can be naturally extended to a Markovian process on $\R^d \cup {\Delta}$ where $\Delta$ is an additional cemetery state and $\R^d \cup{\Delta}$ is endowed with a metric that makes it compact. This standard procedure is called the minimal extension.\par

We will use a more elementary definition of the KBM: it is the same as the one developed in Section 2.3 of \cite{Sznitman} but in our \fix{simpler} setting we can avoid \fix{many} technicalities. We let $B$ be a standard Brownian motion in $\R^d$ and $\xi$ be an independent exponential variable with law $\exp(-\xi)d\xi$ on $\R_+$ defined on the same probability space $(\Omega, \F, \Proba)$. Let $(\F_t)_{t \geq 0}$ be the natural filtration of the Brownian motion. 
We define the killing time as 
\be
    \tau(\Delta) = \tau(\Delta)(B,\xi) = \inf \Bigg\{t \geq 0 \bigg| \int_0^t \K(B_s)ds \geq \xi\Bigg\}.
\ee
It is a stopping time. The independence of $B$ and $\xi$ and the exponential distribution of $\xi$ imply the following property:
\begin{Prop}\label{prop:kbm}
    For all $f$ real-valued measurable function on the Wiener space and $\tau$ stopping time adapted to $(\F_t)_{\fix{t \geq 0}}$,
    \be
        \E_x\big[f(B)\mathbbm{1}_{\tau \leq \tau(\Delta)}\big] = \E_x\Bigg[f(B) \exp\bigg(-\int_0^{\tau}\K(B_s)ds\bigg)\Bigg].
    \ee
\end{Prop}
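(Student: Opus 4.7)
The plan is to condition on the Brownian motion $B$ and exploit the memoryless property of the independent exponential variable $\xi$. Since $f(B)$ and $\tau$ depend only on the Brownian motion, once we condition on $\F_\infty$ (or equivalently, on the whole path of $B$), both quantities become deterministic and all randomness that remains is carried by $\xi$.

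First I would rewrite the event $\{\tau \leq \tau(\Delta)\}$ purely in terms of $\xi$ and the path of $B$. By definition,
\begin{equation}
\tau(\Delta) = \inf\Bigl\{t \geq 0 \,\Big|\, \textstyle\int_0^t \K(B_s)\,ds \geq \xi\Bigr\},
\end{equation}
and since $s \mapsto \int_0^s \K(B_u)\,du$ is continuous and non-decreasing, the inequality $\tau \leq \tau(\Delta)$ is equivalent (up to a $\Proba$-null event, since $\xi$ has a continuous distribution) to $\int_0^\tau \K(B_s)\,ds < \xi$.

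Next I would use the independence of $B$ and $\xi$. By Fubini / conditional expectation,
\begin{equation}
\E_x\bigl[f(B)\mathbbm{1}_{\tau \leq \tau(\Delta)}\bigr]
= \E_x\Bigl[f(B)\,\Proba\bigl(\xi > \textstyle\int_0^\tau \K(B_s)\,ds \,\big|\, B\bigr)\Bigr].
\end{equation}
Since $\xi \sim \mathrm{Exp}(1)$ is independent of $B$ and $\int_0^\tau \K(B_s)\,ds$ is $\F_\infty$-measurable and a.s.\ finite (using that $\K$ is bounded and we can first prove the identity for $\tau \wedge N$ and then pass to the limit if $\tau$ is not bounded), the conditional probability equals $\exp\bigl(-\int_0^\tau \K(B_s)\,ds\bigr)$. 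Substituting gives the stated identity.

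The main (minor) subtlety is measurability and integrability, rather than any deep obstacle: one should check that $\int_0^\tau \K(B_s)\,ds$ is well-defined as an $\F_\tau$-measurable random variable (which follows from continuity of $\K$ and the fact that $\tau$ is a stopping time), and that we are allowed to apply Fubini to interchange the expectations in $B$ and $\xi$ — this is fine because the integrand is non-negative after decomposing $f$ into positive and negative parts, or because $f$ is assumed such that the expressions make sense. No additional ideas beyond the independence of $\xi$ from the filtration $(\F_t)$ are needed.
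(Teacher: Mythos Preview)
Your proposal is correct and follows exactly the approach the paper indicates: the paper does not spell out a proof but simply states that the identity follows from the independence of $B$ and $\xi$ together with the exponential distribution of $\xi$, which is precisely the argument you give. Your handling of the minor measurability and integrability points is more careful than the paper's treatment.
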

Observe that the expectation on the right-hand side only involves the law of the Brownian motion $B$. This proposition justifies the analogy with the KRW: the product of killing probabilit\fix{ies} in the discrete is replaced by the exponential of the integral of the killing rate in the continuum.\par
We introduce some definitions and notations related to our problem. If $A \subset \R^2$, we will denote by $\tau(A)$ the hitting time of $A$ by $B$ by $\tau(r) = \tau(B(r))$.
\begin{Def}
    We say that an increasing sequence $(\Lambda_R)_{R \in \N}$ of strict bounded subsets of $\R^2$ is an exhaustion of $\R^2$ if for all $r >0$, there exists $R \in \N$ such that  $B(r) \subset \Lambda_R$.
\end{Def}
We finally recall some classical results on the Brownian motion. If $W$ is a one dimensional Brownian motion, $x >0$ and $\sigma(x)$ denotes the hitting times of $x$, using the reflection principle and rescaling property of the Brownian motion:
\be\label{eq:LDP1d}
    \Proba_0[\sigma(x) \leq t] = \Proba_0\left[\sup_{[0,t]}W \geq x\right] =  \Proba_0[|W_t| \geq x] = \Proba_0\left[|W_1|\geq \frac{x}{\sqrt{t}}\right] \leq \exp\left(-\frac{x^2}{2t}\right).
\ee
A union bound implies the following large deviation estimate for the one-dimensional BM:
\be\label{eq:LDP:twosided}
    \Proba_0[\min(\sigma(x),\sigma(-x)) \leq t] \leq 2\exp\left(-\frac{x^2}{2t}\right).
\ee
Since the coordinates of the 2-dimensional BM are 1-dimensional BM\fix{s} and $B_{\R}(r/2) \times B_{\R}(r/2) \subset B_{\R^2}(r)$, a union bound implies for all $x \in \R^2$, $t,r \geq 0$
    \be\label{eq:2d:LDP}
        \Proba_x[\tau(B(x,r)^c) \leq t] \leq 4\exp\left(-\frac{r^2}{8t}\right).
    \ee
    We also recall the classical skew-product decomposition of the BM in two dimensions, see for example Section 15.7 of the book \cite{ItoMcKean}.
\begin{Def}
    For $x,y \in \R^2 \setminus \{0\}$, we will denote by $\arg(x)$ the argument of $x$ $mod~2\pi$. When we talk about a difference of argument $|\arg(x)-\arg(y)|$, we mean the length of the minimal arc between $\frac{x}{|x|}$ and $\frac{y}{|y|}$ on the unit circle.
\end{Def}
\begin{Prop}
  \label{skew}
  Let $x \in \R^2 \setminus \{ 0 \}$ be fixed and $r = |x|$. Let $(\beta_t)_{t \in \R_+}$ be a Bessel process of dimension $2$ (i.e. it is the norm of a two-dimensional BM) started at $r$. Define the associated Bessel clock
  \begin{equation}
    R_t = \int_{s=0}^t \frac{ds}{\beta_s^2}.
  \end{equation}
  Let $W$ be a one-dimensional BM starting at $0$ independ\fix{e}nt from $\beta$. Then, $\big(\beta_t e^{i(\arg(x)+W_{R_t})}\big)_{t\in \R_+}$ is a two-dimensional BM starting from $x$.
\end{Prop}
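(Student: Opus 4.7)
The plan is to apply Itô's formula to the candidate process $Z_t = \beta_t \exp\bigl(i(\arg(x) + W_{R_t})\bigr)$, write its semimartingale decomposition in terms of the driving noises of $\beta$ and $W$, and then invoke Lévy's characterization to recognize its real and imaginary parts as a standard planar Brownian motion started at $x$.

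First, I would record the data of the problem in SDE form. Since $\beta$ is a 2-dimensional Bessel process started at $r = |x| > 0$, I write $d\beta_t = d\gamma_t + \tfrac{1}{2\beta_t}\,dt$ for some standard 1D Brownian motion $\gamma$, noting that $\beta_t > 0$ for all $t \geq 0$ almost surely so that $R_t = \int_0^t \beta_s^{-2}\,ds$ is continuous, strictly increasing, and adapted to the natural filtration of $\gamma$. Working in the filtration $\mathcal{G}_t = \sigma(\gamma_s, W_s : s \leq t)$, the independence of $W$ and $\beta$ together with the $\sigma(\gamma)$-measurability of $R_t$ imply that $M_t := W_{R_t}$ is a continuous $\mathcal{G}_t$-martingale with $\langle M\rangle_t = R_t$, and that $\langle \gamma, M\rangle_t = 0$.

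Next, set $\theta_t = \arg(x) + M_t$ and apply Itô's formula to $f(u,v) = u e^{iv}$ at $(\beta_t, \theta_t)$. The drift $\tfrac{1}{2\beta_t}\,dt$ arising from $d\beta_t$ cancels the second-order term $-\tfrac{1}{2}\beta_t e^{i\theta_t}\, d\langle M\rangle_t = -\tfrac{1}{2\beta_t}e^{i\theta_t}\,dt$ coming from $f_{vv}$, while the mixed term $f_{uv}\,d\langle \beta, M\rangle$ vanishes, leaving
\begin{equation}
dZ_t \;=\; e^{i\theta_t}\,d\gamma_t \;+\; i\,\beta_t\,e^{i\theta_t}\,dM_t.
\end{equation}
Writing $Z_t = U_t + i V_t$, a direct computation using $d\langle \gamma\rangle_t = dt$, $d\langle M\rangle_t = \beta_t^{-2}\,dt$ and $d\langle \gamma, M\rangle_t = 0$ gives $d\langle U\rangle_t = d\langle V\rangle_t = dt$ and $d\langle U, V\rangle_t = 0$. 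Lévy's characterization then identifies $(U, V)$ as a standard 2D Brownian motion, and the initial value is $Z_0 = r\,e^{i\arg(x)} = x$, which is the statement of the proposition.

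The main subtle point is establishing that $M_t = W_{R_t}$ is a $\mathcal{G}_t$-martingale with the claimed bracket and that $\langle \gamma, M\rangle = 0$; this is precisely where the independence of $W$ and $\beta$ is used essentially. The cleanest way is to condition on the path of $\beta$: under this conditioning $R$ is deterministic, so $W_{R_\cdot}$ is a deterministic time change of a Brownian motion independent of $\gamma$, and one can then lift the conditional martingale and bracket identities back to $\mathcal{G}_t$. Once this is in place, the remainder is routine Itô calculus together with Lévy's two-dimensional characterization.
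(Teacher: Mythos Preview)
Your argument is correct: the It\^o-formula computation with the Bessel SDE $d\beta_t = d\gamma_t + \tfrac{1}{2\beta_t}\,dt$, the time-changed martingale $M_t = W_{R_t}$, the cancellation of the drift against the $f_{vv}$ term, and the verification of the brackets $d\langle U\rangle = d\langle V\rangle = dt$, $d\langle U,V\rangle = 0$ leading to L\'evy's characterization is the standard route and you have identified the one genuinely delicate point (that $W_{R_\cdot}$ is a $\mathcal{G}_t$-martingale orthogonal to $\gamma$, handled by conditioning on $\beta$).

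However, the paper does not give its own proof of this proposition at all. It is stated as a classical fact and attributed to Section~7.15 of It\^o--McKean \cite{ItoMcKean}; the authors simply recall it for later use in the proof of Lemma~\ref{Arprime} and in Section~\ref{appendixA}. So there is nothing to compare against: you have supplied a complete (and correct) proof where the paper only gives a reference.
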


\subsection{Proof of the \fix{theorem}}
In this section we state and prove a series of auxiliary results and show how they imply \fix{Theorem~\ref{thm.counterex}}.
\paragraph{Overview of the proof}
The core of the proof will be to understand the event $\fix{\{}\tau(R) < \tau(\Delta)\fix{\}}$. We will first give matching upper and lower bound for \fix{the probability of} this event by combining elementary large deviation estimates and bounds on the killing time. Refining these estimates, we will be able to show that with high probability, on the event $\fix{\{}\tau(R) < \tau(\Delta)\fix{\}}$, \fix{the KBM's trajectory is close to a straight line and its speed is close to a constant (depending on $\alpha$)}. We will rule out all other possibilities: if the BM is too slow or deviates \fix{too much} from this trajectory, it cannot survive, while the probability that it goes too fast is control\fix{l}ed by large deviations estimates. Once this event is well understood, the theorem will be easy to prove: if the only way for the KBM to survive is to escape the origin \fix{close to} a straight line, if the KBM starts from (say) $(r,0)$ conditioning the KBM to escape to the right (its natural behavior) will not have the same effect as conditioning it to escape to the left (very unlikely behavior).\par \bigskip
The first step is to observe that since the killing function, the BM and the \fix{E}uclidean ball $B(r)$ are \fix{rotationally} invariant, the law of the hitting time of $B(r)^c$ starting from $x$ only depends on $r=|x|$. Hence \fix{with a small abuse of notation} we can write
\begin{equation}\label{eq:abuse}
  \Q_r[\tau(r) < \tau(\Delta)] := \Q_x[\tau(r) < \tau(\Delta)]
\end{equation}
for the common law. This argument will be used repeatedly and not be explicited every time. Our key analysis will take place on a small space scale and then extend to the full picture, using the following decomposition: for all $r>0$, $n \in \N$, by Proposition \ref{prop:kbm} and $n$ consecutive uses of the Strong Markov property,
\begin{equation}
  \label{eq.rtoR}
  \Q_r[\tau(2^nr) < \tau(\Delta)] = \prod_{i=0}^{n-1} \Q_{2^ir}\big[\tau(2^{i+1}r) < \tau(\Delta)\big].
\end{equation}
We start by giving estimates for each term of this product. These lemmas are interesting in themselves because they show the balance between the killing and the entropy. Define $\beta > 0$ such that
\begin{equation}
  \label{defepsilon}
  2 - \alpha - 2\beta = 2\beta,
\end{equation}
namely $\beta = \frac{2-\alpha}{4}$. We will see in the proofs the reason for this choice. We first give a lower bound:

\begin{Lem}
  \label{lbrto2r}
  There exist constants $c,a > 0$ such that for all $r \geq 1$,
  \begin{equation}
    \Q_r\big[\tau(2r) < \tau(\Delta)\big] \leq c\exp(-ar^{2\beta})
  \end{equation}
\end{Lem}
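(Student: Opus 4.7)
The plan is to balance two competing ways the Brownian motion could manage to reach $B(2r)^c$ alive: either it makes a slow, diffusive traversal (in which case the killing term dominates), or it escapes ballistically at an unusually fast speed (in which case the large deviation cost dominates). The defining relation \eqref{defepsilon} for $\beta$ is precisely the point at which these two contributions are of the same order, so I would introduce the threshold time
\begin{equation}
t^{\star} \;=\; r^{2-2\beta} \;=\; r^{1+\alpha/2},
\end{equation}
for which $r^{2}/t^{\star} = t^{\star} r^{-\alpha} = r^{2\beta}$, and split according to whether $\tau(2r)\le t^{\star}$ or $\tau(2r)>t^{\star}$.

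For the \emph{fast} regime $\{\tau(2r)\le t^{\star}\}$ I would simply drop the killing. Starting from $x$ with $|x|=r$, the ball $B(x,r)$ is contained in $B(2r)$, so $\tau(B(x,r)^{c})\le\tau(2r)$. Applying the two-dimensional large deviation bound \eqref{eq:2d:LDP} gives
\begin{equation}
\Q_{x}\bigl[\tau(2r)\le t^{\star}\bigr]\;\le\;\Q_{x}\bigl[\tau(B(x,r)^{c})\le t^{\star}\bigr]\;\le\;4\exp\!\Bigl(-\tfrac{r^{2}}{8t^{\star}}\Bigr)\;=\;4\exp\!\Bigl(-\tfrac{r^{2\beta}}{8}\Bigr).
\end{equation}

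For the \emph{slow} regime $\{\tau(2r)>t^{\star}\}$ the path stays inside $B(2r)$ up to time $t^{\star}$, hence $|B_{s}|\le 2r$ for $s\in[0,t^{\star}]$. Because $r\ge 1$, $(2r)^{-\alpha}\le 1$, so $\K(B_{s})=|B_{s}|^{-\alpha}\ge(2r)^{-\alpha}$ on this time interval, giving
\begin{equation}
\int_{0}^{\tau(2r)}\K(B_{s})\,ds\;\ge\;\int_{0}^{t^{\star}}\K(B_{s})\,ds\;\ge\;t^{\star}(2r)^{-\alpha}\;=\;2^{-\alpha}r^{2\beta}.
\end{equation}
Using Proposition \ref{prop:kbm} with $\tau=\tau(2r)$ and $f(B)=\mathds{1}_{\tau(2r)>t^{\star}}$, this gives
\begin{equation}
\Q_{x}\bigl[\tau(2r)<\tau(\Delta),\;\tau(2r)>t^{\star}\bigr]\;\le\;\E_{x}\!\left[\exp\!\Bigl(-\!\int_{0}^{\tau(2r)}\!\K(B_{s})ds\Bigr)\mathds{1}_{\tau(2r)>t^{\star}}\right]\;\le\;\exp\!\bigl(-2^{-\alpha}r^{2\beta}\bigr).
\end{equation}

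Adding the two cases and using rotational invariance to pass from $\Q_{x}$ to $\Q_{r}$ yields the desired $c\exp(-a r^{2\beta})$ with $a=\min(1/8,\,2^{-\alpha})$ and $c=5$. The only real thinking in the argument is the choice of the exponent of $t^{\star}$: once this trade-off is identified (and it is exactly what \eqref{defepsilon} encodes), the rest reduces to one application of the Gaussian tail bound \eqref{eq:2d:LDP} and one application of the Feynman--Kac identity of Proposition \ref{prop:kbm}.
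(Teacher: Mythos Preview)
Your proof is correct and follows exactly the same strategy as the paper: split at the threshold time $t^{\star}=r^{2-2\beta}$, bound the fast regime by the Gaussian tail estimate \eqref{eq:2d:LDP}, and bound the slow regime using that the path stays in $B(2r)$ so the killing rate is at least $(2r)^{-\alpha}$. The only cosmetic difference is that you invoke Proposition~\ref{prop:kbm} explicitly whereas the paper writes the Feynman--Kac expectation directly.
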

\begin{proof}
    The proof consists in finding the good time scale at which the killing and the entropy compensate. By union bound,
    \begin{equation}
        \Proba_r\big[\tau(2r) 
        < \tau(\Delta)\big] \leq \Proba_r\big[\tau(2r) < r^{2-2\beta}\big]
        + \Proba_r\Big[\tau(2r) < \tau(\Delta) , \tau(2r) > r^{2-2\beta}\Big].
    \end{equation}
    On the one hand, if we pick any $x$ with $|x|=r$, \fix{using the notation~\eqref{eq:abuse} we can write}
    \be
        \Proba_r\big[\tau(2r) < r^{2-2\beta}\big] 
        = \Proba_x\big[\tau(2r) < r^{2-2\beta}\big]
        \leq \Proba_x\big[\tau(B(x,r)^c) < r^{2-2\beta}\big]
        \leq 4\exp\left(-\frac{r^{2\beta}}{8}\right)
    \ee
    by~\eqref{eq:2d:LDP}. On the other hand, since on the event $\fix{\big\{}\tau(2r) < \tau(\Delta)\fix{\big\}} \cap \fix{\big\{}\tau(2r) \geq r^{2-2\beta}\fix{\big\}}$ the killing is at least $\K(2r) = \frac{1}{(2r)^{\alpha}}$ and the KBM survives for a time at least $r^{2-2\beta}$, we obtain
  \begin{equation}
    \begin{aligned}
      \Proba_r\Big[\tau(2r) < \tau(\Delta) , \tau(2r) \geq r^{2-2\beta}\Big]
      &= \E_r\Bigg[\exp\bigg(-\int_0^{\tau(2r)}\K(B_s)ds\bigg)\mathds{1}_{\tau(2r) \geq r^{2-2\beta}}\Bigg] \\
      &\leq \E_r\Bigg[\exp\bigg(-\int_0^{r^{2-2\beta}}\K(2r)ds\bigg)\Bigg]\\
      &\leq \exp(-2^{-\alpha}r^{2-2\beta-\alpha})
      = \exp(-2^{-\alpha}r^{2\beta}).
    \end{aligned}
  \end{equation}
  The last equality comes from (and actually is the reason for) the definition of $\beta$ in~\eqref{defepsilon}. Putting the last three equations together proves the lemma.
\end{proof}

Using this upper bound, we provide a matching lower bound and give a more precise description of the event $\fix{\{}\tau(2r) < \tau(\Delta)\fix{\}}$.
\begin{Lem}\label{rto2r}
    For all $T \geq 1$, define
    \begin{equation}\label{defAr}
        A_r = A_r(T) := \fix{\big\{}\tau(2r) < Tr^{2-2\beta}\fix{\big\}} \cap \fix{\big\{}\tau(2r) < \tau(r/2)\fix{\big\}},
    \end{equation}
    There exist some universal constants (not the same as in the preceding lemma) $c,c', a, a' > 0$ and a fixed $T_0 \geq 1$ such that for all $r \geq 1$, denoting $A_r = A_r(T_0)$,
    \begin{equation}\label{eq:lower:bound:KBM}
         \Q_r\fix{\big[}\fix{\big\{}\tau(2r) < \tau(\Delta)\fix{\big\}} \cap A_r\fix{\big]} \geq c\exp(-ar^{2\beta})
    \end{equation}
    and
    \begin{equation}
        \Q_r\big[\fix{\big\{}\tau(2r) < \tau(\Delta)\fix{\big\}} \setminus A_r\big]
        \leq c'\exp(-a'r^{2\beta}) \Q_r[\tau(2r) < \tau(\Delta)].
    \end{equation}
\end{Lem}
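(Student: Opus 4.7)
The plan is to identify the typical ``escape corridor'' of the KBM: a path that traverses the annulus $B(2r)\setminus B(r)$ almost radially in time of order $r^{2-2\beta}$, the exponent $2-2\beta$ being chosen precisely so that the Brownian entropy cost ($\sim r^2/t$) and the killing cost ($\sim t\,\K(r) = t\, r^{-\alpha}$) balance, with common optimal value of order $r^{2\beta}$. For the lower bound \eqref{eq:lower:bound:KBM} I would use rotational invariance to take $x=(r,0)$ and introduce the event $E_r$ that the BM stays inside the rectangle $[(1-\eta)r,2r]\times[-\eta r,\eta r]$ until it exits $B(2r)$, and that this exit occurs before time $T_1 r^{2-2\beta}$, for a small fixed $\eta>0$ and some $T_1\leq T_0$. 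By Brownian scaling $\hat B_s = B_{sr^2}/r$ and a standard Schilder/chain-type argument, the purely Brownian probability of $E_r$ is bounded below by $c_1\exp(-C_1 r^{2\beta}/T_1)$, capturing the Gaussian cost of moving distance $r$ in time $T_1 r^{2-2\beta}$. On $E_r$ one has $|B_s|\geq (1-\eta)r$ throughout, hence $\K(B_s)\leq (1+O(\eta))r^{-\alpha}$ and $\int_0^{\tau(2r)}\K(B_s)\,ds\leq T_1(1+O(\eta))r^{2\beta}$; Proposition \ref{prop:kbm} then produces a survival probability on $E_r$ at least $\exp(-T_1(1+O(\eta))r^{2\beta})$. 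Since $E_r\subset A_r(T_0)$ by construction (the BM stays above $|B|=r/2$ and exits before $T_0 r^{2-2\beta}$), this yields \eqref{eq:lower:bound:KBM} with exponent $a_1 = C_1/T_1 + T_1 + o(1)$.

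For the second inequality I would split
\[
(\tau(2r)<\tau(\Delta))\setminus A_r \;=\; E_{\mathrm{slow}}\cup E_{\mathrm{dip}},
\]
with $E_{\mathrm{slow}}=\{\tau(2r)\geq T_0 r^{2-2\beta},\,\tau(2r)<\tau(\Delta)\}$ and $E_{\mathrm{dip}}=\{\tau(r/2)\leq \tau(2r)<\tau(\Delta)\}$, and bound each separately. On $E_{\mathrm{slow}}$ the KBM is alive in $B(2r)$ for time at least $T_0 r^{2-2\beta}$, during which $\K(B_s)\geq (2r)^{-\alpha} = 2^{-\alpha}r^{-\alpha}$; hence $\int_0^{\tau(2r)}\K(B_s)\,ds\geq 2^{-\alpha}T_0 r^{2\beta}$ and Proposition \ref{prop:kbm} gives $\Q_r[E_{\mathrm{slow}}]\leq \exp(-2^{-\alpha}T_0 r^{2\beta})$. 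On $E_{\mathrm{dip}}$, the strong Markov property at $\tau(r/2)$ factors the probability as $\Q_r[\tau(r/2)<\tau(2r)\wedge\tau(\Delta)]\cdot \Q_{r/2}[\tau(2r)<\tau(\Delta)]$, with first factor $\leq \Q_r[\tau(r/2)<\tau(2r)] = \tfrac12$ by BM harmonic measure. A further Markov step at $\tau(r)$ inside the second factor decomposes it as $\Q_{r/2}[\tau(r)<\tau(\Delta)]\cdot\Q_r[\tau(2r)<\tau(\Delta)]$, and Lemma \ref{lbrto2r} applied at scale $r/2$ controls $\Q_{r/2}[\tau(r)<\tau(\Delta)]\leq c\exp(-a(r/2)^{2\beta})$. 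This gives $\Q_r[E_{\mathrm{dip}}]\leq (c/2)\exp(-a r^{2\beta}/2^{2\beta})\cdot \Q_r[\tau(2r)<\tau(\Delta)]$, already of the required form.

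It remains to calibrate $T_0$. Dividing the bound on $\Q_r[E_{\mathrm{slow}}]$ by the lower bound $c_1\exp(-a_1 r^{2\beta})$ from the first step gives a ratio bounded by $c_1^{-1}\exp(-(2^{-\alpha}T_0 - a_1)r^{2\beta})$, so what I need is $2^{-\alpha}T_0 > a_1$. This calibration is the main obstacle: since $2^{-\alpha}<1$, the naive choice $T_0 = T_1$ is insufficient and one must exploit the fact that $a_1$ does not depend on $T_0$. Indeed, $a_1 = C_1/T_1 + T_1 + o(1)$ depends only on the tube-time parameter $T_1$ chosen in Step 1 (any $T_1\leq T_0$ is allowed), so I can first pick $T_1$ to minimize $a_1$ (for instance $T_1 = \sqrt{C_1}$, giving $a_1 \leq 2\sqrt{C_1} + o(1)$), and then take $T_0$ large enough (depending on $\alpha$ and $C_1$, but not on $r$) so that $2^{-\alpha}T_0 > a_1$. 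Setting $a' = \min(2^{-\alpha}T_0 - a_1,\; a\,2^{-2\beta})$ and absorbing the bounded range of $r$ into the constants $c,c'$ by compactness then yields both inequalities uniformly for $r\geq 1$.
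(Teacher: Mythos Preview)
Your proposal is correct and follows essentially the same route as the paper: the same splitting $(\tau(2r)<\tau(\Delta))\setminus A_r = E_{\mathrm{slow}}\cup E_{\mathrm{dip}}$, the same double strong-Markov argument for $E_{\mathrm{dip}}$ combined with Lemma~\ref{lbrto2r} at scale $r/2$, the same killing bound $\int_0^{\tau(2r)}\K\geq 2^{-\alpha}T_0 r^{2\beta}$ for $E_{\mathrm{slow}}$, and the same calibration $2^{-\alpha}T_0>a_1$ made possible because the lower-bound exponent $a_1$ is obtained with a time parameter independent of $T_0$. The only cosmetic differences are that the paper takes $T_1=1$ and the annular corridor $\{\tau(2r)<\tau(r/2)\}$ rather than your rectangular tube with free $T_1$, and replaces your Schilder-type estimate by an explicit coordinate decomposition of the planar BM.
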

\begin{Rem}\label{rem:schilder}
  Actually, by a cautious use of Schilder's theorem (see for example Theorem 5.2.3 of \cite{LDP}), we could identify the rate of exponential decay, namely
  \begin{equation}
    \frac{1}{r^{2\beta}} \log \Q_r[\tau(2r) < \tau(\Delta)] \to m
  \end{equation}
  where $m$ is the minimum of a modified version of the rate function in Schilder's Theorem including an additional term involving the scaling limit of the killing function. We will not need such precise estimates here, which would not even simplify our proof.
\end{Rem}
\begin{proof}
    Let $T \geq 1$ to be determined in the proof (it will depend only on $\alpha$) and recall the definition of $A_r$ \fix{from}~\eqref{defAr}. 
    We first prove the lower bound, using that on the event $\fix{\big\{}\tau(2r) < \tau(\Delta)\fix{\big\}} \cap A_r$, the killing is (deterministically) not too heavy: since $T \geq 1$,
    \begin{equation}\label{Ar}
        \begin{aligned}
            \Proba_r\big[\fix{\big\{}\tau(2r) < \tau(\Delta)\fix{\big\}} \cap A_r\big] 
            &\geq \Proba_r\bigg[\tau(2r) < \tau(\Delta) , \tau(2r) < r^{2-2\beta} , \tau(2r) < \tau(r/2)\bigg]\\
            &= \E_r\Bigg[\exp\bigg(-\int_0^{\tau(2r)}\K(B_s)ds\bigg)\mathbbm{1}_{\fix{\{}\tau(2r) < r^{2-2\beta}\fix{\}} \cap \fix{\{}\tau(2r) < \tau(r/2)\fix{\}}}\Bigg]\\
            &\geq \exp\big(-r^{2-2\beta}\K(r/2)\big)\Proba_r\Big[\tau(2r) < r^{2-2\beta} , \tau(2r) < \tau(r/2)\Big]\\
            &\geq \exp\big(-2^{\alpha}r^{2\beta}\big)\Proba_r\Big[\tau(2r) < r^{2-2\beta} , \tau(2r) < \tau(r/2)\Big]
        \end{aligned}
  \end{equation}
  Controlling this probability is elementary: by the scaling property of the BM,
  \be
        \Proba_r\Big[\tau(2r) < r^{2-2\beta} , \tau(2r) < \tau(r/2)\Big] = \Proba_1\Big[\tau(2) < r^{-2\beta} , \tau(2) < \tau(1/2)\Big].
  \ee
  Writing $W^x$ and $W^y$ for the independent horizontal and vertical components of a BM \fix{and using the notation~\eqref{eq:abuse}},
    \be
        \begin{aligned}
            \Proba_{(1,0)}\Big[\tau(2) < r^{-2\beta} , \tau(2) < \tau(1/2)\Big] 
            &\geq \Proba^x_1\big[\sigma(1/2) \geq r^{-2\beta}\big]\Proba^y_0\Big[|W^y_{r^{-2b}}|\geq 2\Big]\\
            & \geq \Proba^x_1\big[\sigma(1/2) \geq 1\big]\Proba^y_0\Big[|W^y_1|\geq 2r^{\beta}\Big]\\
            &\geq c\exp(-ar^{-2\beta})
        \end{aligned}
    \ee
    for some constants $c, a > 0$ by using $r \geq 1$ and a basic estimate on the Gaussian distribution (and observing that $\Proba^x_1\big[\sigma(1/2) \geq 1\big]$ is a positive constant). Combining this estimate with~\eqref{Ar} \fix{gives}~\eqref{eq:lower:bound:KBM}, which is the first statement of the lemma.\par
    In the second statement, we want to show that $A_r$ is (with high probability) the only way for $\fix{\big\{}\tau(2r)<\tau(\Delta)\fix{\big\}}$ to happen. If we define $B_r = \fix{\big\{}\tau(r/2)<\tau(2r)\fix{\big\}}$ and $C_r = \fix{\big\{}\tau(2r) > Tr^{2-2\beta}\fix{\big\}}$ then
    \begin{equation}\label{ArBrCr}
            \fix{\big\{}\tau(2r) < \tau(\Delta)\fix{\big\}} \setminus A_r = \fix{\big\{}\tau(2r) < \tau(\Delta)\fix{\big\}} \cap \big\{B_r \cup C_r\big\}.
    \end{equation}
    On the one hand, using the strong Markov property twice and Lemma \ref{lbrto2r},
    \begin{equation}
        \begin{aligned}
            \Proba_r\fix{\big[}\fix{\big\{}\tau(2r) < \tau(\Delta)\fix{\big\}} \cap B_r\fix{\big]} &= \Proba_r\big[\tau(r/2) < \tau(2r) < \tau(\Delta)\big]\\
            &= \Proba_r\big[\tau(r/2) < \tau(2r) \wedge \tau(\Delta)\big]\Proba_{r/2}\big[\tau(r) < \tau(\Delta)] \Proba_r[\tau(2r)<\tau(\Delta)\big]\\
            &\leq \Proba_{r/2}\big[\tau(r) < \tau(\Delta)] \Proba_r[\tau(2r)<\tau(\Delta)\big]\\
            &\overset{\ref{lbrto2r}}{\leq} c\exp(-ar^{-2\beta})\Proba_r\big[\tau(2r)<\tau(\Delta)\big].
        \end{aligned}
    \end{equation}
    On the other hand, on $C_r$, the killing is at least $\K(2r)$ for a time at least $Tr^{2-2\beta}$, so
    \begin{equation}
        \Proba_r\fix{\big[}\fix{\big\{}\tau(2r) < \tau(\Delta)\fix{\big\}} \cap C_r\fix{\big]} \leq \exp\big(-Tr^{2-2\beta}\K(r/2)\big) = \exp(-2^{-\alpha}Tr^{2\beta}).
    \end{equation}
    A union bound and the last two equations imply
    \begin{equation}\label{BrCr}
        \begin{aligned}
            \Proba_r\big[\fix{\big\{}\tau(2r) < \tau(\Delta)\fix{\big\}} \cap \big\{B_r \cup C_r\big\}\big] &\leq \Proba_r\big[\tau(2r) \leq T r^{2-2\beta}\big] + \Proba_r\big[\tau(r/2) < \tau(2r) < \tau(\Delta)\big]\\
            &\leq \exp(-2^{-\alpha}Tr^{2\beta}) + c\exp(-ar/2)\Proba_r\big[\tau(2r)<\tau(\Delta)\big].
        \end{aligned}
    \end{equation}
    Using this equation and the lower bound~\eqref{eq:lower:bound:KBM}, we can fix $T_0$ large enough (depending only on $\beta$ and some universal constants) and some constants $c, a > 0$ such that for all $r \geq 1$,
    \begin{equation}
        \Proba_r\big[\fix{\big\{}\tau(2r) < \tau(\Delta)\fix{\big\}} \cap \big\{B_r \cup C_r\big\}\big] \leq c\exp(-ar^{2\beta})\Proba_r\big[\tau(2r) < \tau(\Delta)\big].
    \end{equation}
    Taking probabilities in~\eqref{ArBrCr} and using the last equation gives the second statement of the Lemma.
\end{proof}

\begin{Rem}
    The last two lemmas could be easily proved by using more involved large deviation estimates at time $r^{2-2\beta}$ combined with estimates on the killing function (see Remark \ref{rem:schilder}), but we chose to use only elementary estimates. The next lemma is more delicate since it involves different length scales for the radial and tangential components of the BM.
\end{Rem}

\fix{Elaborating on this lemma, we can show additionally that on the event of survival from $r$ to $2r$, the KBM “does not wind around the origin” with high probability: intuitively, the trajectories that wind around the origin are longer hence less likely to survive.
Actually, we will only need that the arguments of the two endpoints are close, so we only state the following (weaker) result.}
\begin{Lem}\label{Arprime}
    There exist some universal constants (not the same as in the preceding lemma) $c, a > 0$ such that for all $r \geq 1$,
    \begin{equation}
        \Q_r\big[\tau(2r)<\tau(\Delta) , |\arg(B_{\tau(2r)})-\arg(B_0)|\geq r^{-\beta/2}\big] \leq c\exp(-ar^{\beta})\Q_r\big[\tau(2r)<\tau(\Delta)\big]
    \end{equation}
\end{Lem}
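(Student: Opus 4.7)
The plan is to exploit the rotational invariance of $\K(x) = 1 \wedge |x|^{-\alpha}$ via the skew-product decomposition of Proposition \ref{skew}. Writing $B_t = \beta_t \exp\bigl(i(\arg(B_0) + W_{R_t})\bigr)$, the crucial observation is that $\K$ depends only on $|x| = \beta_t$, so both $\tau(\Delta)$ and $\tau(2r)$ are measurable with respect to $\sigma(\beta, \xi)$, whereas the angular one-dimensional Brownian motion $W$ remains independent of these events. Moreover the minimal-arc angular displacement satisfies $|\arg(B_{\tau(2r)}) - \arg(B_0)| \leq |W_{R_{\tau(2r)}}|$ (since passing to the minimal arc modulo $2\pi$ can only decrease distances), so it suffices to control $|W_{R_{\tau(2r)}}|$.

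I would split the event according to whether $A_r$ (defined in \eqref{defAr}) occurs. The contribution on $(\tau(2r)<\tau(\Delta)) \setminus A_r$ is directly bounded by $c'\exp(-a'r^{2\beta})\Q_r[\tau(2r)<\tau(\Delta)]$ thanks to the second statement of Lemma \ref{rto2r}, which is more than enough. For the contribution on $A_r$, note that on this event $\beta_s \geq r/2$ for $s \leq \tau(2r)$ and $\tau(2r) \leq T_0 r^{2-2\beta}$, so the Bessel clock obeys
\[
    R_{\tau(2r)} = \int_0^{\tau(2r)} \frac{ds}{\beta_s^2} \leq \frac{4 T_0 r^{2-2\beta}}{r^2} = 4 T_0 r^{-2\beta}.
\]
Conditioning on $\sigma(\beta, \xi)$ and applying the reflection estimate \eqref{eq:LDP:twosided} to the independent BM $W$ with $x = r^{-\beta/2}$ and $t = 4T_0 r^{-2\beta}$ gives $x^2/(2t) = r^{\beta}/(8T_0)$, so on $A_r$
\[
    \Proba\bigl[\,|W_{R_{\tau(2r)}}| \geq r^{-\beta/2} \,\bigm|\, \sigma(\beta,\xi)\bigr] \leq \Proba\Bigl[\sup_{s \leq 4T_0 r^{-2\beta}} |W_s| \geq r^{-\beta/2}\Bigr] \leq 2\exp\bigl(-r^{\beta}/(8T_0)\bigr).
\]
Integrating this uniform bound against $\mathbbm{1}_{(\tau(2r) < \tau(\Delta)) \cap A_r}$ controls the contribution on $A_r$ by $2\exp(-r^{\beta}/(8T_0))\, \Q_r[\tau(2r) < \tau(\Delta)]$. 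Combining both pieces (and using $r^{2\beta} \geq r^{\beta}$ for $r \geq 1$) yields the lemma.

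The main subtlety is the decoupling between the (complicated) radial survival event and the angular winding; this is exactly what the skew-product provides, turning the survival event into a $\beta$-measurable one so that the angular variable $W_{R_{\tau(2r)}}$ remains a centered Gaussian with known variance. The role of $A_r$ is then to upgrade typical radial behaviour into a deterministic lower bound $\beta_s \geq r/2$, which keeps the Bessel clock small enough for Gaussian concentration of $W$ to kick in.
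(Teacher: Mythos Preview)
Your proof is correct and follows essentially the same route as the paper: split according to $A_r$, bound the complement via Lemma~\ref{rto2r}, and on $A_r$ control the Bessel clock by $4T_0 r^{-2\beta}$ so that independence of $W$ from $(\beta,\xi)$ together with the reflection estimate~\eqref{eq:LDP:twosided} yields the Gaussian tail $2\exp(-r^{\beta}/(8T_0))$. Your explicit remark that $\tau(\Delta)$ and $\tau(2r)$ are $\sigma(\beta,\xi)$-measurable (because $\K$ is radial) is exactly the decoupling the paper uses when it factors the probability in~\eqref{eq:2nd}.
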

\begin{proof}
    Define for all $T \geq 1$, $r \geq 1$,
    \begin{equation}\label{defArprime}
        A'_r = A_r \cap \fix{\Big\{}\big|\arg(B_{\tau(2r)})-\arg(B_0)\big| \leq r^{-\beta/2}\fix{\Big\}}.
    \end{equation}
    \fix{Choose} $T_0 \geq 1$ and $a,a',c,c' > 0$ as in Lemma \ref{rto2r}, and let $r \geq 1$. First of all, by union bound,
    \begin{equation}\label{unionArprime}
        \begin{aligned}
            \Proba_r\big[\fix{\big\{}\tau(2r)<\tau(\Delta)\fix{\big\}} \setminus A'_r\big]  
            &\leq \Proba_r\big[\fix{\big\{}\tau(2r)<\tau(\Delta)\fix{\big\}} \setminus A_r\big]\\
            &+ \Proba_r\Big[\fix{\big\{}\tau(2r)<\tau(\Delta)\fix{\big\}} \cap A_r \cap \fix{\big\{}\big|\arg(B_{\tau(2r)})-\arg(B_0)\big| \geq r^{-\beta/2}\fix{\big\}}\Big].
        \end{aligned}
    \end{equation}
    The first term is bounded by Lemma \ref{rto2r}:
    \be\label{eq:1st}
        \Proba_r\big[\fix{\big\{}\tau(2r)<\tau(\Delta)\fix{\big\}} \setminus A_r\big] \leq c\exp(-ar^{2\beta})\Proba_r\big[\tau(2r)<\tau(\Delta)\big].
    \ee 
    We now bound the other term by using the skew product decomposition Proposition \ref{skew}. On $A_r$,
    \begin{equation}
        R_{\tau(2r)} = \int_0^{\tau(2r)}\frac{ds}{\beta_s^2} \leq \int_0^{Tr^{2-2\beta}}\frac{ds}{(r/2)^2} \leq 4T_0r^{-2\beta}.
    \end{equation} 
    Hence,
    \be
        A_r \cap \fix{\big\{}\big|\arg(B_{\tau(2r)})-\arg(B_0)\big| \geq r^{-\beta/2}\fix{\big\}} \subset \fix{\bigg\{}\sup_{[0,4Tr^{-2\beta}]}|W_t| \geq r^{-\beta/2}\fix{\bigg\}}
    \ee
    So by independence of the radial and tangential components in the skew-product decomposition
    \be\label{eq:2nd}
        \begin{aligned}
            \Proba_r\Big[\fix{\big\{}\tau(2r)<\tau(\Delta)\fix{\big\}}& \cap A_r \cap \fix{\big\{}\big|\arg(B_{\tau(2r)})-\arg(B_0)\big| \geq r^{-\beta/2}\fix{\big\}}\Big]\\
            &\leq \Proba_r\bigg[\fix{\big\{}\tau(2r)<\tau(\Delta)\fix{\big\}} \cap A_r \cap \fix{\bigg\{}\sup_{[0,4Tr^{-2\beta}]}|W_t| \geq r^{-\beta/2}\fix{\bigg\}}\bigg]\\
            &\leq \Proba_r\Big[\fix{\big\{}\tau(2r)<\tau(\Delta)\fix{\big\}} \cap A_r\Big]\Proba\bigg[\sup_{[0,4Tr^{-2\beta}]}|W_t| \geq r^{-\beta/2}\bigg]\\
            &\overset{\eqref{eq:LDP:twosided}}{\leq} 2\exp\left(-\frac{r^{\beta}}{8T_0}\right)\Proba_r\Big[\fix{\big\{}\tau(2r)<\tau(\Delta)\fix{\big\}} \cap A_r\Big].
        \end{aligned}
    \ee
    Inserting \eqref{eq:1st} and \eqref{eq:2nd} in \eqref{unionArprime} gives
    \be
        \Proba_r\big[\fix{\big\{}\tau(2r)<\tau(\Delta)\fix{\big\}} \setminus A'_r\big] \leq c \exp\left(-ar^{\beta}\right)\Proba_r\big[\tau(2r) < \tau(\Delta)\big]
    \ee
  and the lemma follows since $A_r' \subset \fix{\Big\{}\big|\arg(B_{\tau(2r)})-\arg(B_0)\big| \leq r^{-\beta/2}\fix{\Big\}}$.
\end{proof}

This estimate on the “local scale” (from $r$ to $2r$) is strong enough to prove an estimate on the “global scale” (from $r$ to $2^nr$):
\begin{Lem}\label{rtoR}
    For all $\eta, \delta > 0$, there exists $r_0 \geq 1$ such that for all $r \geq r_0$, for all $n \in \N$,
    \begin{equation}
        \Q_r\Big[\tau(2^nr) < \tau(\Delta) , \big|\arg(B_{\tau(2^nr)}) - \arg(B_0)\big| \geq \eta\Big] \leq \delta \Q_x\big[\tau(2^nr) < \tau(\Delta)\big]
    \end{equation}
\end{Lem}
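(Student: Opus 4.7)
My plan is to iterate the local scale estimate of Lemma \ref{Arprime} over the nested annuli $B(2^{i+1}r) \setminus B(2^i r)$ for $i = 0, 1, \dots, n-1$, exploiting the multiplicative decomposition \eqref{eq.rtoR} of the survival probability.

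First I would observe that the angular displacement at the final exit is controlled by a telescoping sum across the annuli:
\begin{equation}
|\arg(B_{\tau(2^n r)}) - \arg(B_0)| \leq \sum_{i=0}^{n-1} |\arg(B_{\tau(2^{i+1} r)}) - \arg(B_{\tau(2^i r)})|.
\end{equation}
For each $i$, set $F_i = \{|\arg(B_{\tau(2^{i+1}r)}) - \arg(B_{\tau(2^i r)})| \geq (2^i r)^{-\beta/2}\}$. If none of the $F_i$ occur, then the total angular change is at most $\sum_{i=0}^{\infty} (2^i r)^{-\beta/2} = \frac{r^{-\beta/2}}{1 - 2^{-\beta/2}}$, which is smaller than $\eta$ provided $r$ is chosen large enough (depending only on $\eta$ and $\beta$). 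Consequently, on this range of $r$,
\begin{equation}
\big(\tau(2^n r) < \tau(\Delta)\big) \cap \big(|\arg(B_{\tau(2^n r)}) - \arg(B_0)| \geq \eta\big) \subset \bigcup_{i=0}^{n-1} \big(\tau(2^n r) < \tau(\Delta)\big) \cap F_i.
\end{equation}

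Next, I would estimate each term in the union by combining the strong Markov property applied at the times $\tau(2^j r)$ with the rotational invariance of the KBM (both $\K$ and the family of balls are rotation-symmetric). These together give the factorization
\begin{equation}
\Q_r\big[(\tau(2^n r) < \tau(\Delta)) \cap F_i\big] = \Q_{2^i r}\big[(\tau(2^{i+1}r) < \tau(\Delta)) \cap F_i'\big] \prod_{\substack{0 \leq j < n \\ j \neq i}} \Q_{2^j r}\big[\tau(2^{j+1} r) < \tau(\Delta)\big],
\end{equation}
where $F_i'$ is the corresponding event starting fresh from $\partial B(2^i r)$ (the angular displacement is measured from the new initial argument). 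Applying Lemma \ref{Arprime} at scale $2^i r$ bounds the first factor by $c\exp(-a(2^i r)^{\beta}) \cdot \Q_{2^i r}[\tau(2^{i+1}r) < \tau(\Delta)]$, which reconstitutes the full product \eqref{eq.rtoR} up to the multiplicative factor $c\exp(-a(2^i r)^{\beta})$. Thus
\begin{equation}
\Q_r\big[(\tau(2^n r) < \tau(\Delta)) \cap F_i\big] \leq c \exp(-a (2^i r)^{\beta}) \, \Q_r[\tau(2^n r) < \tau(\Delta)].
\end{equation}
A union bound over $i$ then yields an overall factor of $\sum_{i=0}^{\infty} c \exp(-a (2^i r)^{\beta})$. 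Because $(2^i r)^{\beta}$ grows geometrically in $i$, this series is dominated by its first term $c \exp(-a r^{\beta})$, and can be made smaller than $\delta$ by taking $r_0$ large enough.

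The main obstacle is essentially bookkeeping: one must be careful that Lemma \ref{Arprime}, which is phrased relative to the starting argument $\arg(B_0)$, can be transported to any scale $2^i r$ where it must be applied relative to $\arg(B_{\tau(2^i r)})$; this is where rotational invariance is essential. Beyond that, the proof is a clean combination of the local estimate, the strong Markov property, and the fast geometric decay of the sum $\sum \exp(-a(2^i r)^{\beta})$.
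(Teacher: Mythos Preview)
Your proposal is correct and follows essentially the same route as the paper. The only cosmetic difference is that the paper bounds the \emph{good} event from below---it shows $\Q_r\big[(\tau(2^n r)<\tau(\Delta))\cap\bigcap_i F_i^c\big]\ge \Q_r[\tau(2^n r)<\tau(\Delta)]\prod_i(1-c\exp(-a(2^ir)^{\beta}))$ via the strong Markov property and rotational invariance, then controls the product---whereas you bound the complementary \emph{bad} event from above via a union bound over the $F_i$; both use the same ingredients (Lemma~\ref{Arprime}, strong Markov, rotational invariance, and the summability of $\sum_i \exp(-a(2^ir)^{\beta})$ and of $\sum_i (2^ir)^{-\beta/2}$).
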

\begin{proof}
    Let $\delta, \eta >0$.
    \fix{Choose} $c, a >0$ as in Lemma~\ref{Arprime}. By applying the strong Markov property $n$ times, for all $r$ large enough such that Lemma \ref{Arprime} applies
    \be
        \begin{aligned}
            &\Proba_r\bigg[\tau(2^nr)<\tau(\Delta) , \bigcap_{i=0}^{n-1} \fix{\big\{}|\arg(B_{\tau(2^{i+1}r)})-\arg(B_{\tau(2^ir)})|\leq (2^ir)^{-\beta/2}\fix{\big\}}\bigg]\\
            &= \prod_{i=0}^{n-1}\Proba_{2^ir}\fix{\big[}\tau(2^{i+1}r)<\tau(\Delta) , |\arg(B_{\tau(2^{i+1}r)})-\arg(B_{\tau(2^ir)})|\leq (2^ir)^{-\beta/2}\fix{\big]}\\
            &\overset{\ref{Arprime}}{\geq} \prod_{i=0}^{n-1}\Proba_{2^ir}[\tau(2^{i+1}r)<\tau(\Delta)]\prod_{i=0}^{n-1}(1-c\exp(-a(2^ir)^{\beta/2})\\
            &= \Proba_r[\tau(2^nr)<\tau(\Delta)] \prod_{i=0}^{n-1}(1-c\exp(-a(2^ir)^{\beta/2}).
        \end{aligned}
    \ee
    It is straightforward to control the product term, using that $\ln(1-u) \geq -2u$ for $u>0$ small enough. For $r$ large enough, for all $n$,
    \begin{equation}
        \begin{aligned}
            \prod_{i=0}^{n-1}\big(1 - c\exp(-a2^{i\beta/2}r^{\beta/2})\big) 
            &= \exp\bigg(\sum_{i=0}^{n-1}\ln\big(1 - c\exp(-a2^{i\beta/2}r^{\beta/2})\big)\bigg)\\
            &\geq \exp\bigg(-2c\sum_{i=0}^{n-1}\exp(-a2^{i\beta/2}r^{\beta/2})\bigg)\\
            &\geq \exp\bigg(-2c\sum_{i=0}^{\infty}\exp(-a2^{i\beta/2}r^{\beta/2})\bigg) \geq 1-\delta
        \end{aligned}
    \end{equation}
    for $r$ large enough since the sum in the exponential is convergent and as small as we want when $r \to \infty$ (it is of the order of its first term). Since on the event $\bigcap_{i=0}^{n-1} \fix{\big\{}|\arg(B_{\tau(2^{i+1}r)})-\arg(B_{\tau(2^ir)})|\leq (2^ir)^{-\beta/2}\fix{\big\}}$,
    \be
        |\arg(B_{\tau(2^nr)}) - \arg(x)\big| \leq \sum_{i=0}^{n-1} |\arg(B_{\tau(2^{i+1}r)})-\arg(B_{\tau(2^ir)})| \leq \sum_{i=0}^{\infty}(2^ir)^{-\beta/2} \leq \eta 
    \ee
    for $r$ large enough, this concludes the proof.
\end{proof}

Using this result, we can prove a final lemma:
\begin{Lem}\label{exhaust}
    Define two exhaustions of $\R^2$: $\Lambda^+_R = B(R) \cup \big(\R_{\fix{+}} \times \R\big)$ and $\Lambda^-_R = B(R) \cup \big(\R_- \times \R\big)$. Then, for all $\delta > 0$, there exists $r_0 \geq 1$ such that for all $r \geq r_0$, for all $n$,
    \begin{equation}
        \Proba_{(0,r)}\big[\tau((\Lambda^+_{2^nr})^c) < \tau(\Delta)\big] \geq (1-\delta)\Proba_r[\tau(2^nr)<\tau(\Delta)]
    \end{equation}
    while
    \begin{equation}
        \Proba_{(0,r)}\big[\tau((\Lambda^-_{2^nr})^c) < \tau(\Delta)\big] \leq \delta \Proba_r[\tau(2^nr)<\tau(\Delta)],
    \end{equation}
    \fix{where we use the notation~\eqref{eq:abuse} since $|(0,r)| = r$.}
\end{Lem}
\begin{proof}
    Let $\delta > 0$, $\eta = \pi/4$, and $r_0 \geq 1$ large enough associated to $\delta/2, \eta$ by Lemma \ref{rtoR}. Observe that for all $r \geq 1$, $n \in \N$, starting from $(r,0)$, we have $\tau(2^nr) \leq \tau\big((\Lambda^+_{2^nr})^c\big)$ and when $\arg\big(B_{\tau(2^nr)}\big) \in [-\pi/4, \pi/4]~mod~2\pi$, $\tau\big((\Lambda^+_{2^nr})^c\big) = \tau(2^nr)$. Hence for $r \geq r_0$,
    \begin{equation}\label{eq:counterex:lb}
        \begin{aligned}
            \Proba_{(r,0)}\big[\tau((\Lambda^+_{2^nr})^c) < \tau(\Delta)\big] 
            &\geq \Proba_{\fix{(r,0)}}\Big[\tau(2^nr) < \tau(\Delta) , \big|\arg(B_{\tau(2^nr)}| \leq \pi/4\big|\Big]\\
            &\geq (1-\delta/2)\Proba_{\fix{(r,0)}}\big[\tau(2^nr) < \tau(\Delta)\big]
        \end{aligned}
    \end{equation}
    by Lemma~\ref{exhaust}. This gives the first statement of the lemma. On the other hand, for all $r \geq r_0$ and $n \in \N$, by a union bound, the strong Markov property with respect to $\tau(2^nr)$ and~\eqref{eq:counterex:lb},
    \begin{equation}\label{eq:inter:lb}
        \begin{aligned}
            \Proba_{(r,0)}\big[\tau\big((\Lambda^-_{2^nr})^c\big) < \tau(\Delta)\big]
            &\leq \Proba_{(r,0)}\big[\tau(2^nr) < \tau(\Delta)\big]\bigg(\sup_{|z| = 2^nr, \arg(z) \in [-\pi/4, \pi/4]} \Proba_z\big[\tau\big((\Lambda^-_{2^nr})^c\big) < \tau(\Delta)\big]\bigg) \\
            &~~~~~~~~~~+ \frac{\delta}{2} \Proba_{(r,0)}\big[\tau(2^nr) < \tau(\Delta)\big].
    \end{aligned}
    \end{equation}
    To conclude, we just have to show that the supremum is smaller than $\delta/2$ for $r$ large enough and all $n \in \N$. This is clear because for any such $z$, the distance between $z$ and $(\Lambda^-_{2^nr})^c$ is at least $2^nr$, so $\tau(B(z,2^nr)^c) \leq \tau((\Lambda^-_{2^nr})^c)$ and
    \be
        \Proba_z\big[\tau\big((\Lambda^-_{2^nr})^c\big) < \tau(\Delta)\big] \leq \Proba_z[\tau(B(z,2^nr)^c) < \tau(\Delta)].
    \ee
    This probability is easily bounded by the same argument as in the proof of Lemma \ref{lbrto2r}: for all $t \geq 0$, by~\eqref{eq:2d:LDP}, if $|z|=2^nr$
    \be
        \Proba_z[\tau(B(z,2^nr)^c) \leq t] \leq 4\exp\left(-\frac{(2^nr)^2}{8t}\right).
    \ee
    On the other hand, on the event $\fix{\{}\tau(B(z,2^nr)^c) \geq t\fix{\}}$ the killing is at least $\K(2^{n+1}r)$ for a time $t$, so
    \be
        \Proba_z\left[\tau(B(z,2^nr)^c) \leq \tau(\Delta) , \tau(B(z,2^nr)^c) \geq t\right] \leq \exp\left(-t(2^{n+1}r)^{-\alpha}\right).
    \ee
    Hence for any such $z$:
    \be
        \Proba_z\big[\tau\big(B(z,2^nr)^c)\big) < \tau(\Delta)\big] \leq 4\exp\left(-\frac{(2^nr)^2}{8t}\right) + \exp\left(-t(2^{n+1}r)^{-\alpha}\right).
    \ee
    Taking for example $t = (2^nr)^{2-2\beta}$ shows that this probability is exponentially small in $2^nr$, in particular it is smaller than $\frac{\delta}{2}$ for $r$ large enough so~\eqref{eq:inter:lb} gives
    \begin{equation}
        \Proba_x\big[\tau((\Lambda^-_{2^nr})^c) < \tau(\Delta)\big] \leq \delta \Proba_x\big[\tau(2^nr) < \tau(\Delta)\big],
    \end{equation}
    which is the second statement of the lemma.
\end{proof}

We are finally ready to prove Theorem \ref{thm.counterex}, which is a direct corollary of the last lemma:
\begin{proof}
    Take $\Lambda^1 = \Lambda^+$, $\Lambda^2 = \Lambda^-$, $r_0$ associated with some fixed $\delta > 0$ by Lemma \ref{exhaust} and $x = (r,0)$, $y = (-r,0)$ for some fixed $r \geq r_0$. By symmetry, for all $n \in \N$,
    \begin{equation}
        \Proba_{(r,0)}\big[\tau\big((\Lambda^+_{2^nr})^c\big) < \tau(\Delta)\big] = \Proba_{(-r,0)}\big[\tau\big((\Lambda^-_{2^nr})^c\big) < \tau(\Delta)\big]
    \end{equation}
    and
    \begin{equation}
        \Proba_{(r,0)}\big[\tau\big((\Lambda^-_{2^nr})^c\big) < \tau(\Delta)\big] = \Proba_{(-r,0)}\big[\tau\big((\Lambda^+_{2^nr})^c\big) < \tau(\Delta)\big].
    \end{equation}
    Then, by Lemma \ref{exhaust}, for all $n \in \N$,
    \begin{equation}
        \frac{\Proba_{(-r,0)}\big[\tau\big((\Lambda^+_{2^nr})^c\big) < \tau(\Delta)\big]}{\Proba_{(r,0)}\big[\tau\big((\Lambda^+_{2^nr})^c\big) < \tau(\Delta)\big]} = \frac{\Proba_{(r,0)}\big[\tau\big((\Lambda^-_{2^nr})^c\big) < \tau(\Delta)\big]}{\Proba_{(r,0)}\big[\tau\big((\Lambda^+_{2^nr})^c\big) < \tau(\Delta)\big]} \leq \frac{\delta}{1-\delta}
    \end{equation}
    and similarly
    \begin{equation}
        \frac{\Proba_{(-r,0)}\big[\tau\big((\Lambda^-_{2^nr})^c\big) < \tau(\Delta)\big]}{\Proba_{(r,0)}\big[\tau\big((\Lambda^-_{2^nr})^c\big) < \tau(\Delta)\big]} \geq \frac{1-\delta}{\delta}.
    \end{equation}
    Hence the limits in the statement of Theorem \ref{thm.counterex} cannot coincide when $\delta$ is small enough ($\delta = 1/4$ for example), which proves the theorem.
\end{proof}


\section{From the killed \fix{B}rownian motion to the killed random walk}\label{appendixA}
In this section, $\alpha \in (14/9,2)$ is fixed, and we let $\K: \R^2 \to \R_+, x \to 1 \wedge \frac{1}{|x|^{\alpha}}$. By restriction, $\K$ also defines a killing function on $\Z^2$. We prove Theorem \ref{krwcounterex}.

\subsection{Notation and strong approximation Theorem}
We start by a warning on notation: in all this section, $a,c$ denote absolute constants (depending only on $\alpha$) that are allowed to vary from line to line.\par

\paragraph{The strong approximation theorem}
If $(S_n)_{n \in \N}$ is a SRW and $(B_t)_{t \in \R_{\fix{+}}}$ a BM defined on the same probability space, let
\be
    \Theta(B,S,n) = \max \{|B_{\lfloor t \rfloor} - S_t|, 0 \leq t \leq n\}.
\ee
We recall the strong approximation of a BM by a SRW as presented for example in Theorem 3.4.2 of \cite{modernRW}.
\begin{Thm}\label{strong}
    There exist two constants $c,a > 0$ and a probability space $(\Omega, \F, \Proba)$ on which are defined a BM $(B_t)_{t \geq 0}$ and a SRW $(S_n)_{n \in \fix{\NN}}$ such that for all $n \geq 1$ and $1 \leq u \leq n^{1/4}$,
    \begin{equation}
        \Proba\Big[\Theta(B,S,n) > u n^{1/4}\sqrt{\log n}\Big] \leq c\exp(-au).
    \end{equation}
\end{Thm}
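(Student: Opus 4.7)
This is a classical strong--approximation theorem of Komlós--Major--Tusnády flavour, and although the optimal $O(\log n)$ KMT bound requires a delicate dyadic conditional--quantile construction, the weaker $n^{1/4}\sqrt{\log n}$ scale stated here is within reach of the more elementary Skorokhod embedding. The plan is to build the coupling by setting $T_0 = 0$ and inductively $T_k = \inf\{t > T_{k-1} : |B_t - B_{T_{k-1}}| = 1\}$ in dimension one (and running the same construction on two independent BMs representing the coordinates on a $\pi/4$--rotated basis in the two--dimensional setting). Then $S_k := B_{T_k}$ is exactly a SRW, and the increments $\xi_k = T_k - T_{k-1}$ are i.i.d.\ copies of the exit time of a BM from a compact interval, so they have mean $1$, finite variance, and exponential moments in a neighbourhood of zero.

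The proof then rests on two concentration inputs. First, Bernstein's inequality applied to $T_n - n = \sum_{k=1}^n (\xi_k - 1)$, combined with Doob's maximal inequality for this martingale, yields for every $v \leq cn$:
\begin{equation}
\Proba\Big[\max_{k \leq n} |T_k - k| > v\Big] \leq C \exp\big(-c \min(v^2/n,\, v)\big).
\end{equation}
Second, a standard chaining argument for the Brownian modulus of continuity at scale $v$ on $[0,n]$ gives
\begin{equation}
\Proba\Big[\sup_{|s-t|\leq v,\ s,t \leq n} |B_s - B_t| > M\Big] \leq C\tfrac{n}{v} \exp\big(-M^2/(cv)\big).
\end{equation}
On the intersection of the two good events, for any $t \in [0, n]$ one has $|B_{\lfloor t \rfloor} - S_t| \leq 1 + |B_{\lfloor t \rfloor} - B_{T_{\lfloor t \rfloor}}|$, which is bounded by the oscillation of $B$ on an interval of length at most $v$.

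The last step is to calibrate $M = u n^{1/4}\sqrt{\log n}$ against a well-chosen $v$ so that both tails reduce to $C e^{-au}$ uniformly across $1 \leq u \leq n^{1/4}$. The natural scaling is $v \asymp u n^{1/2} \log n$, for which $M^2/(cv) \asymp u$, so the oscillation bound gives $Ce^{-au}$ up to a polynomial prefactor $n/v$; and the Bernstein bound is then doubly exponential in $u$ and is absorbed. The main technical obstacle is precisely this bookkeeping: near the lower end $u \asymp 1$ the polynomial prefactor $n/v$ threatens to dominate the exponential, while near the upper end $u \asymp n^{1/4}$ one needs the Bernstein tail to remain exponential in $u$ rather than falling into the Gaussian regime $e^{-u^2/n}$. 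Both ends are handled by a mild iterative refinement (essentially splitting $[0,n]$ dyadically and summing the contributions over $O(\log n)$ scales, using $u \leq n^{1/4}$ to absorb prefactors), which is the technical core of the standard Skorokhod proof presented in \cite{modernRW}.
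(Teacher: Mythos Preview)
The paper does not prove this statement: it is quoted verbatim as Theorem~3.4.2 of \cite{modernRW} and used as a black box throughout Section~\ref{appendixA}. There is therefore no proof in the paper to compare against; your sketch via Skorokhod embedding is one of the standard routes to the $n^{1/4}$ rate and is in the spirit of the cited reference.

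One comment on the calibration. Your choice $v \asymp u\sqrt{n}\log n$ gives $M^2/(cv)\asymp u$, and then the prefactor $n/v\asymp \sqrt{n}/(u\log n)$ indeed diverges for each fixed $u$ as $n\to\infty$; this is the ``obstacle'' you flag. But it is not a genuine obstacle, and no dyadic refinement is needed. Take instead $v=u\sqrt{n}$: Bernstein gives $\exp(-c v^2/n)=\exp(-cu^2)\le\exp(-cu)$, while the oscillation bound becomes
\[
\frac{n}{v}\,e^{-M^2/(cv)} \;=\; \frac{\sqrt{n}}{u}\,n^{-u/c}\;=\;\frac{1}{u}\,n^{1/2-u/c}.
\]
For $u$ above a fixed threshold $u_0$ (namely $u_0=c/2$) the exponent is negative, and the constraint $u\le n^{1/4}$ (i.e.\ $n\ge u^4$) makes this at most $u^{1-4u/c}$, which decays super-exponentially in $u$. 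For $u\in[1,u_0]$ the inequality in the statement is trivial, since any probability is at most $1\le Ce^{-au_0}\le Ce^{-au}$ once $C$ is chosen large enough. The same remark disposes of your worry at the upper end: with $v=u\sqrt{n}$ the Bernstein bound is already $\exp(-cu^2)$, comfortably below $\exp(-au)$ on the whole range $1\le u\le n^{1/4}$.
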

Let $B$ and $S$ be as in the theorem and denote by $(\F_t)_{t \in \R_{\fix{+}}}$ and $(\G_n)_{n \in \fix{\NN}}$ their respective natural filtrations. We use the same notations as in the preceding sections $\tau(A)$, $\tau(r)$ for the hitting times but we write respectively $\tau^S$ and $\tau^B$ for the hitting time by the SRW and BM.

\paragraph{Coupling the KBM and the SRW}
Let $\xi^B$, $\xi^S$ be two exponential variables on $(\Omega, \F, \Proba)$ with common law $\exp(-\xi)d\xi$ on $\R_+$, independent of each other and of $(B,S)$ (such variables can be found up to enlarging the probability space). The killing times are defined as
\be
    \left\{
            \begin{array}{ll}
                \tau^B(\Delta) = \tau^B(\Delta)(B,\xi^B) = \inf \Bigg\{t \geq 0 \bigg| \int_0^t \K(B_s)ds \geq \xi^B\Bigg\}\\
                \tau^S(\Delta) = \tau^S(\Delta)(S,\xi^S) = \inf \Bigg\{n \geq 0 \bigg| -\sum_{i=0}^{n-1} \ln(1-\K(S_i)) \geq \xi^S\Bigg\}.
            \end{array}
    \right.
\ee
Then, $(B, \tau^B(\Delta))$ satisfies Proposition \ref{prop:kbm} and by independence of $S$ and $\xi^S$ and exponential distribution of $\xi^S$:
\begin{Prop}\label{prop:krw}
    For all $f$ real-valued measurable function on $\R^{\N}$ and $\tau$ stopping time adapted to $(\G_n)$,
    \be
        \E_x\big[f(S)\mathbbm{1}_{\tau \leq \tau(\Delta)}\big] = \E_x\Bigg[f(S)\prod_{i=0}^{\tau-1}(1-\K(S_i))\Bigg].
    \ee
\end{Prop}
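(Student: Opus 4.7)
The plan is to imitate the continuous argument sketched before Proposition \ref{prop:kbm}, replacing the integrated rate $\int_0^t\K(B_s)ds$ by its discrete counterpart $\sigma_n := -\sum_{i=0}^{n-1}\ln(1-\K(S_i))$ and using that $\xi^S$ is an independent $\exp(1)$ random variable. Since the identity concerns an expectation taken over both the random walk and the auxiliary clock, the natural first move is to condition on $\G_\infty = \sigma(S_n : n \in \N)$: the quantities $f(S)$, $\tau$, and the whole sequence $(\sigma_n)$ are $\G_\infty$-measurable, while $\xi^S$ is independent of $\G_\infty$. This gives
\[
\E_x\big[f(S)\mathbbm{1}_{\tau \leq \tau^S(\Delta)}\big] = \E_x\big[f(S)\,\Proba(\tau \leq \tau^S(\Delta) \mid \G_\infty)\big],
\]
and the task reduces to identifying the inner conditional probability on $\{\tau < \infty\}$; the event $\{\tau = \infty\}$ is then handled by monotone convergence applied to the increasing sequence $\{\tau \wedge n \leq \tau^S(\Delta)\}$.

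Next I would translate $\{\tau \leq \tau^S(\Delta)\}$ into a condition on $\xi^S$ alone, given $S$. Because $n \mapsto \sigma_n$ is nondecreasing, the definition of $\tau^S(\Delta)$ as the first $n$ with $\sigma_n \geq \xi^S$ gives, on the atom $\{\tau = m\}$, that the KRW is still alive at time $m$ precisely when $\sigma_m < \xi^S$. Since $\xi^S$ has a density and is independent of $\G_\infty$, $\Proba(\xi^S = \sigma_n \mid \G_\infty) = 0$ for each $n$, and a countable union shows that the event of interest coincides $\Proba$-almost surely with $\{\xi^S > \sigma_\tau\}$, where $\sigma_\tau$ is $\G_\infty$-measurable by virtue of $\tau$ being a $(\G_n)$-stopping time.

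The third and final step exploits the independence of $\xi^S$ and $\G_\infty$ together with its $\exp(1)$ law to obtain
\[
\Proba(\xi^S > \sigma_\tau \mid \G_\infty) = e^{-\sigma_\tau} = \prod_{i=0}^{\tau-1}(1-\K(S_i)),
\]
and substituting back produces exactly the right-hand side of the proposition. I do not expect any substantive obstacle here; the proof is the verbatim discrete analogue of the one for Proposition \ref{prop:kbm}, and the only care needed concerns the bookkeeping at the boundary $\{\xi^S = \sigma_\tau\}$, which is negligible because $\xi^S$ has a continuous distribution and is independent of $(\sigma_n)_n$.
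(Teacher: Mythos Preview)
Your approach is exactly what the paper has in mind: it does not give a proof at all, merely asserting just before the statement that the identity follows ``by independence of $S$ and $\xi^S$ and exponential distribution of $\xi^S$''. Conditioning on $\G_\infty$ and using that $\xi^S$ is an independent $\exp(1)$ variable is precisely how one unpacks that sentence.

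One bookkeeping point: on $\{\tau=m\}$ the event $\{\tau\le\tau^S(\Delta)\}$ is $\{\tau^S(\Delta)\ge m\}=\{\sigma_{m-1}<\xi^S\}$, whereas your ``alive at time $m$'' reading gives $\{\sigma_m<\xi^S\}=\{\tau^S(\Delta)>m\}$. These differ on $\{\tau^S(\Delta)=m\}$, which has positive probability whenever $\K(S_{m-1})>0$, and your density argument about $\{\xi^S=\sigma_n\}$ does not remove it. What you have actually computed is $\Proba(\tau<\tau^S(\Delta)\mid\G_\infty)=e^{-\sigma_\tau}=\prod_{i=0}^{\tau-1}(1-\K(S_i))$, i.e.\ the identity with \emph{strict} inequality on the left. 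This is in fact the version used throughout Section~\ref{appendixA} (see for instance the proof of Lemma~\ref{lem:ub:SRW}, where the event is written $\tau^S(2r-\theta_r)<\tau(\Delta)$), so the $\le$ in the displayed statement is a harmless slip rather than a gap in your argument.
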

This coincides with the usual definition of the killing time of the KRW.
\paragraph{Skew-product decomposition} As in the preceding subsection, we can write
\be
    B_t = \beta_t\exp(i(\arg(x)+W_{R_t}))
\ee
where $\beta$ is a Bessel process of dimension $2$, $R$ is the associated Bessel clock and $W$ is an independent one-dimensional BM started at $0$.
\paragraph{Basic large deviation estimates.} We will need a discrete analog of the basic large deviation estimate~\eqref{eq:2d:LDP}. It is contained in Proposition 2.1.2 of \cite{modernRW}, which we now recall. There exists $a,c >0$ such that for all $x \in \Z^2$, $n \geq 0$, $t >0$,
\be\label{eq:LDP:SRW}
    \Proba_x[\tau^S(B(x,t\sqrt{n})^c) \leq n] \leq c\exp(-at^2)
\ee

\subsection{Auxiliary results and proof of the main theorem}
\paragraph{Overview of the proof}
The proof of Theorem \ref{thm.counterex} follows the same steps as the proof in the preceding section: it studies the event $\fix{\{}\tau(R)<\tau(\Delta)\fix{\}}$ by means of large deviations estimates and crude bounds on the killing function to show that the only way for the walk to escape is to escape fast without approaching the origin where the killing is strong. Many results are direct analogs of continuous results of the preceding section. The additional difficulty lies in the lack of rotational invariance which prevents us from using directly the skew product decomposition for the KRW. To bypass this problem, we use the strong approximation of the SRW by the BM to show that the SRW is not far from \fix{being rotationally invariant}. We must also show that when the trajectories of the BM and the SRW are close, the killing along the trajectory is roughly the same: luckily for us, this is true for the trajectories that escape fast and do not approach the origin.\par

We define, as in the preceding section $\beta > 0$ by $2-\alpha-2\beta=2\beta$. We also let, for all $r > 0$:
\be
    \theta_r = r^{3(1-\beta)/4}\sqrt{\ln(r)}.
\ee
This choice is arbitrary for the moment but will soon become clear. We first give a lemma on the KBM that we will use later in the proof.
\begin{Lem}\label{lem:KBM:theta}
    If $\eps \in (0, \min(\alpha -7/4, 1/8))$, for all $r$ large enough,
    \be
        \Proba_r[\tau^B(r+2\theta_r)< \tau(\Delta)] \geq 1-r^{-\eps}
    \ee
\end{Lem}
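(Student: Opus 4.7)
The plan is to control the failure probability by a first-moment bound on the accumulated killing. First I would set $\sigma := \tau^B(r+2\theta_r)$ and $X := \int_0^\sigma \K(B_s)\,ds$, then apply Proposition~\ref{prop:kbm} with $f \equiv 1$ and $\tau = \sigma$. Combined with the elementary inequality $e^{-X} \geq 1 - X$ (valid for all real $X$), this gives
\begin{equation*}
    \Proba_r[\tau^B(r+2\theta_r) < \tau(\Delta)] = \E_r[e^{-X}] \geq 1 - \E_r[X],
\end{equation*}
so it suffices to show $\E_r[X] \leq r^{-\eps}$ for $r$ large enough.

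To estimate $\E_r[X]$, I would invoke the classical potential-theoretic identity $\E_r[X] = \int_{B(R)} G_R((r,0),y)\K(y)\,dy$, where $R := r+2\theta_r$ and $G_R$ is the Dirichlet Green function of two-dimensional BM on $B(R)$. The rotational invariance of $\K$ together with the standard identity $\frac{1}{2\pi}\int_0^{2\pi}\ln|a - be^{i\phi}|\,d\phi = \max(\ln|a|,\ln|b|)$ gives the angularly averaged Green function $\bar G_R(r,\rho) = \frac{1}{\pi}\ln\bigl(R/\max(r,\rho)\bigr)$ for $r,\rho < R$. Using the pointwise bound $\K(y) \leq |y|^{-\alpha}$, this yields
\begin{equation*}
    \E_r[X] \leq 2\int_0^R \ln\!\frac{R}{\max(r,\rho)}\,\rho^{1-\alpha}\,d\rho,
\end{equation*}
where the factor $\rho^{1-\alpha}$ is integrable near $0$ because $\alpha < 2$. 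Splitting the integral at $\rho = r$ and using $\ln(R/r) = \ln(1+2\theta_r/r) \leq 2\theta_r/r$, the inner piece ($\rho<r$) contributes $O(r^{1-\alpha}\theta_r)$ while the outer piece ($\rho>r$) is of order $r^{-\alpha}\theta_r^2$, smaller by a factor $\theta_r/r \to 0$. Substituting $\theta_r = r^{3(1-\beta)/4}\sqrt{\ln r}$ with $\beta = (2-\alpha)/4$ gives, after simplifying the exponent, $\E_r[X] \leq C\,r^{11/8 - 13\alpha/16}\sqrt{\ln r}$.

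The remaining step is an algebraic verification: the identity $(13\alpha/16 - 11/8) - (\alpha - 7/4) = 3(2-\alpha)/16$ is strictly positive for $\alpha < 2$, so $13\alpha/16 - 11/8 > \alpha - 7/4 > \eps$ by the hypothesis on $\eps$. Hence $r^{11/8 - 13\alpha/16}\sqrt{\ln r} = o(r^{-\eps})$ and falls below $r^{-\eps}$ for $r$ large enough, which concludes the proof. The main obstacle is the angular averaging of the Green function, but it reduces to the classical log-mean identity recalled above; note that the threshold $\alpha > 7/4$ appearing in the hypothesis is exactly what ensures that the exponent $11/8 - 13\alpha/16$ is negative enough to absorb $\eps$ (together with the $\sqrt{\ln r}$ factor).
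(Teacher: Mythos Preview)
Your proof is correct and takes a genuinely different route from the paper's.

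The paper argues by constructing a favourable event: starting from $(r,0)$, it shows that with probability at least $1-cr^{-1/8}$ the horizontal coordinate reaches $r+2\theta_r$ before time $t_r^2=r^{7/4}$, and with overwhelming probability the BM does not leave $B((r,0),r/2)$ before that time; on the intersection of these events the killing rate is at most $\K(r/2)\le 2^{\alpha}r^{-\alpha}$ for a time $t_r^2$, giving a survival factor $\exp(-2^{\alpha}r^{7/4-\alpha})$. The bound $1-r^{-\eps}$ then follows from $\eps<\min(\alpha-7/4,1/8)$, the two thresholds coming respectively from the killing term and the hitting-time estimate.

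Your approach is more analytic: you bypass the event decomposition entirely by bounding the expected accumulated killing $\E_r[X]$ via the Dirichlet Green function on $B(r+2\theta_r)$ and the angular-mean identity for $\ln|a-be^{i\phi}|$. This is cleaner and in fact yields a strictly better exponent, $13\alpha/16-11/8>\alpha-7/4$ for all $\alpha<2$, so the hypothesis $\eps<1/8$ is never binding in your argument. The price is that you rely on the explicit 2D Green function and the log-mean identity, whereas the paper's argument uses only the elementary reflection-principle and large-deviation estimates~\eqref{eq:LDP1d}--\eqref{eq:2d:LDP} already introduced in the section, keeping the toolkit self-contained. Both arguments exploit rotational invariance in an essential way.
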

\begin{proof}
    By rotational invariance, we can prove it starting from $x = (r,0)$. We first note that if $W^x$ is the horizontal component of the BM and $\sigma^x(r + 2\theta_r)$ is the first hitting time by $W_x$, by reflection principle and scaling for the one dimensional BM, for any $t_r \geq \theta_r$
    \be
        \Proba_r[\sigma^x(r+2\theta_r) \geq t_r^2]
        =\Proba_0\bigg[\sup_{[0,t_r^2]}W^x \leq 2\theta_r\bigg]
        =\Proba_0[|W^x_{t_r^2}|\leq 2\theta_r]
        =\Proba_0\bigg[|W^x_1|\leq \frac{2\theta_r}{t_r}\bigg]
        \leq c\frac{\theta_r}{t_r}.
    \ee
    where $c$ in an absolute positive constant since
    \be
        \frac{1}{\sqrt{2\pi}}\int_{-u}^u \exp\left(-\frac{x^2}{2}\right)dx \leq \frac{2u}{\sqrt{2\pi}} = cu.
    \ee
    Letting $t_r = r^{7/8}$ gives, for $r$ large enough
    \be
        \Proba_r[\sigma^x(r+2\theta_r) \geq t_r^2] \leq cr^{-1/8-3\beta/4}\sqrt{\ln(r)} \leq cr^{-1/8}.
    \ee
    On the other hand, by~\eqref{eq:2d:LDP}
    \be
        \Proba_{x}[\tau^B(B(x,r/2)^c) \leq t_r^2] \leq 4\exp\left(-\frac{r^{1/4}}{32}\right).
    \ee
    If the BM starts from $x = (r,0)$, $(\sigma^x(r+2\theta_r) < t_r^2) \subset \big(\tau^B(r+2\theta_r) < t_r^2\big)$. Besides, on the event $\fix{\{}\tau^B(B(x, r/2)^c) > t_r^2\fix{\}}$, the killing rate is at most (for $r$ large enough) $\K(r/2) \leq 2^{\alpha}r^{-\alpha}$ for a time $t_r^2$ so
    \be
        \begin{aligned}
            \Proba_r[\tau^B(r+2\theta_r)< \tau(\Delta)] 
            &\geq \Proba_r\big[\tau^B(r+2\theta_r)< \tau(\Delta) , \sigma^x(r+2\theta_r) < t_r^2 , \tau^B(B(x, r/2)^c) > t_r^2\big]\\ 
            &\geq \exp(-2^{\alpha}r^{-\alpha}t_r^2)\Proba_r[\sigma^x(r+2\theta_r) < t_r^2 , \tau^B(B(x, r/2)^c) > t_r^2]\\
            &\geq \exp(-2^{-\alpha}r^{7/4-\alpha})\left(1-cr^{-1/8}-4\exp\left(-\frac{r^{1/4}}{32}\right)\right) \geq 1-r^{-\eps}
        \end{aligned}
    \ee
    for any $\eps < \min(a-7/4, 1/8)$ and $r$ large enough.
\end{proof}
We state and prove an analog of Lemma \ref{lbrto2r}, with similar arguments:
\begin{Lem}\label{lem:ub:SRW}
    There exists constants $c,a >0$ such that for all $r >0$ large enough, for all $x \in \partial B(r)$,
    \be
        \Proba_x[\tau^S(2r-\theta_r)< \tau^S(\Delta)] \leq c\exp(-ar^{2\beta}).
    \ee
\end{Lem}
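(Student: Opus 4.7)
The plan is to mirror the strategy used in Lemma \ref{lbrto2r} for the KBM, picking the natural intermediate time scale $T := \lfloor r^{2-2\beta}\rfloor$ at which entropy (room to escape) and killing (survival cost) balance. Writing $\tau := \tau^S(2r-\theta_r)$, split
\begin{equation}
    \Proba_x[\tau < \tau^S(\Delta)] \leq \Proba_x[\tau \leq T] + \Proba_x\big[(\tau < \tau^S(\Delta)) \cap (\tau > T)\big],
\end{equation}
and bound each piece by $c\exp(-ar^{2\beta})$. The extra $\theta_r$ correction in the target radius is immaterial at this stage (it will matter only later, when comparing with the BM via Theorem \ref{strong}) because $\theta_r = r^{3(1-\beta)/4}\sqrt{\ln r} = o(r)$ for $\beta \in (0,1/9)$, so distances of order $r - \theta_r$ are still of order $r$.

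For the first term, observe that any $y$ with $|y|\geq 2r-\theta_r$ satisfies $|y-x|\geq r-\theta_r$ since $|x|=r$, hence $\tau \geq \tau^S\bigl(B(x,r-\theta_r)^c\bigr)$. Apply the SRW large deviation bound \eqref{eq:LDP:SRW} with $n=T$ and $t\sqrt{n}=r-\theta_r$, so that $t^2 = (r-\theta_r)^2/T \asymp r^{2\beta}$, giving $\Proba_x[\tau\leq T] \leq c\exp(-ar^{2\beta})$.

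For the second term, use Proposition \ref{prop:krw}: on the event $\{\tau > T\}$ we have $|S_i| \leq 2r - \theta_r \leq 2r$ for every $0 \leq i \leq T-1$, and since $|S_i| \leq 2r$ implies $\K(S_i) \geq (2r)^{-\alpha}$, we get
\begin{equation}
    \Proba_x\big[(\tau<\tau^S(\Delta))\cap(\tau>T)\big]
    = \E_x\Bigg[\mathbbm{1}_{\tau>T} \prod_{i=0}^{\tau-1}(1-\K(S_i))\Bigg]
    \leq \bigl(1-(2r)^{-\alpha}\bigr)^T
    \leq \exp\bigl(-2^{-\alpha} r^{2-2\beta-\alpha}\bigr),
\end{equation}
which equals $\exp(-2^{-\alpha} r^{2\beta})$ by the very choice \eqref{defepsilon} of $\beta$. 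Summing both bounds finishes the proof. There is no real obstacle here: this is an elementary discrete version of Lemma \ref{lbrto2r}. The only mild care needed is that $\theta_r = o(r)$ so that the distance $r-\theta_r$ the walk must cover is still $\asymp r$, and that $T = r^{2-2\beta}$ remains below the time at which the $r^{-\alpha}$ killing rate becomes negligible — both secured by the choice of $\beta$ and by $\alpha<2$.
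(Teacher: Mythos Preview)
Your proof is correct and follows essentially the same approach as the paper: split at the time scale $r^{2-2\beta}$, bound the first term with the discrete large deviation estimate \eqref{eq:LDP:SRW}, and bound the second term by lower-bounding the killing rate inside $B(2r-\theta_r)$ via Proposition \ref{prop:krw}. The only cosmetic difference is that the paper uses $\K(2r-\theta_r)$ rather than $(2r)^{-\alpha}$ as the lower bound on the killing rate, which is equivalent since $\theta_r = o(r)$.
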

\begin{proof}
    As in the proof of Lemma \ref{lbrto2r}, using the discrete large deviation estimate~\eqref{eq:LDP:SRW} instead of the \fix{continuous} large deviation estimate~\eqref{eq:2d:LDP}, for $r$ large enough:
    \be
        \begin{aligned}
            \Proba_x[\tau^S(2r-\theta_r)< \tau^S(\Delta)]
            &\leq \Proba_x[\tau^S(2r-\theta_r) < r^{2-2\beta}] + \Proba_x[r^{2-2\beta} \leq \tau^S(2r-\theta_r) <\tau(\Delta)]\\
            &\overset{\eqref{eq:LDP:SRW},\ref{prop:krw}}{\leq} c\exp(-ar^{2\beta}) + \E_x\bigg[\mathbbm{1}_{r^{2-2\beta} \leq \tau^S(2r-\theta_r)}\prod_{i=0}^{\tau^S(2r-\theta_r)-1}(1-\K(S_i))\bigg]\\
            &\leq c\exp(-ar^{2\beta}) + \exp\left(r^{2-2\beta}\ln(1-\K(2r-\theta_r))\right)\\
            &\leq c\exp(-ar^{2\beta}) + \exp(-ar^{2-2\beta-\alpha}) \leq c\exp(-ar^{2\beta})
        \end{aligned}
    \ee
\end{proof}
We are now ready to compare the KBM and KRW.
\begin{Lem}\label{lem:lb:SRW}
    For any $0 < \eps < \min((3\alpha-2)/2, (5\alpha-6)/16)$, there exists $c > 0$ such that for all $r$ large enough, for all $x \in \partial B(r)$, 
    \be
        \Proba_x[\tau^S(2r-\theta_r) < \tau^S(\Delta)] \geq (1-cr^{-\eps})\Proba_r[\tau^B(2r) < \tau^B(\Delta)]
    \ee
\end{Lem}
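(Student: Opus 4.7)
The plan is to use Propositions \ref{prop:krw} and \ref{prop:kbm} to rewrite both sides as expectations of killing weights, and then to transfer the continuum survival to the discrete one pathwise on a high-probability coupling event. Set $T = T_0 r^{2-2\beta}$ with $T_0$ from Lemma \ref{rto2r}, and define $E = F \cap A_r$, where $A_r$ is the event from Lemma \ref{rto2r} (forcing $\tau^B(2r)\leq T$ and the BM path to stay in $B(r/2)^c$), and $F$ is the strong-approximation event obtained from Theorem \ref{strong} with $n = T$ and a parameter $u$ of order $r^{(1-\beta)/4}$, calibrated so that $u n^{1/4}\sqrt{\log n} \leq \theta_r/4$. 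This yields $\sup_{t\leq T}|B_t - S_{\lfloor t\rfloor}|\leq \theta_r/4$ with probability at least $1 - c\exp(-a r^{(1-\beta)/4})$, which is superpolynomially small in $r$.

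On $E \cap \{\tau^B(2r)<\tau^B(\Delta)\}$ the SRW stays in $\{|x|\geq r/4\}$ and $|S_{\lfloor\tau^B(2r)\rfloor}|\geq 2r-\theta_r/4$, so by the unit-step property $\tau^S(2r-\theta_r)\leq \lfloor\tau^B(2r)\rfloor$. Since the factors $1-\K(S_i)$ lie in $(0,1]$, the product truncates to
\[
\prod_{i=0}^{\tau^S(2r-\theta_r)-1}(1-\K(S_i)) \;\geq\; \exp\Big(-\sum_{i=0}^{\lfloor\tau^B(2r)\rfloor-1}\bigl(\K(S_i)+\K(S_i)^2\bigr)\Big)
\]
via $-\ln(1-u)\leq u+u^2$ (valid since $\K(S_i)\leq (r/4)^{-\alpha}$ is small). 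The quadratic sum is bounded crudely by $T\cdot (r/4)^{-2\alpha} = Cr^{(2-3\alpha)/2}$, which is $O(r^{-\eps})$ iff $\eps<(3\alpha-2)/2$. For the linear sum, the Lipschitz bound $|\K(x)-\K(y)|\leq C r^{-\alpha-1}|x-y|$ on $\{|x|,|y|\geq r/4\}$ together with the coupling gives
\[
\Big|\sum_{i=0}^{\lfloor\tau^B(2r)\rfloor-1}\K(S_i) - \int_0^{\tau^B(2r)}\K(B_s)\,ds\Big| \;\leq\; C\,T\,r^{-\alpha-1}\theta_r + O(r^{-\alpha}),
\]
and with $\theta_r = r^{3(1-\beta)/4}\sqrt{\log r}$ and $\beta=(2-\alpha)/4$ the right-hand side equals $Cr^{-(5\alpha-6)/16}\sqrt{\log r}$, which is $O(r^{-\eps})$ iff $\eps<(5\alpha-6)/16$. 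The two constraints together explain the $\min$ in the statement, and combining the estimates yields the pathwise inequality $\prod(1-\K(S_i))\geq (1-cr^{-\eps})\exp(-\int_0^{\tau^B(2r)}\K(B_s)\,ds)$ on $E$.

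To conclude, take $\E_x[\,\cdot\,\mathbbm{1}_E\,]$ on both sides and apply Proposition \ref{prop:kbm} (legitimate because $E$ is $\F_T$-measurable and $\xi^B$ is independent of $(B,S)$) to turn the right-hand expectation into $\Proba_x[\{\tau^B(2r)<\tau^B(\Delta)\}\cap E]$. Finally, $\Proba_x[\{\tau^B(2r)<\tau^B(\Delta)\}\cap E^c]$ is at most $\Proba(F^c) + \Proba_x[\{\tau^B(2r)<\tau^B(\Delta)\}\setminus A_r]$, where the first term is superpolynomially small and the second is at most $c\exp(-ar^{2\beta})\Proba_r[\tau^B(2r)<\tau^B(\Delta)]$ by Lemma \ref{rto2r}; both are $O(r^{-\eps})\Proba_r[\tau^B(2r)<\tau^B(\Delta)]$. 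The main obstacle is the calibration of $\theta_r$: it must be large enough for strong approximation on the survival timescale $r^{2-2\beta}$ to hold with superpolynomial probability, yet small enough that the Lipschitz-type difference $T\,r^{-\alpha-1}\theta_r$ between the killing rates along the two trajectories decays polynomially in $r$. The assumption $\alpha>14/9$ is precisely what makes both constraints simultaneously satisfiable with a positive $\eps$.
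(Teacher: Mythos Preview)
Your argument is correct and follows essentially the same route as the paper's proof: restrict to the intersection of the good Brownian event $A_r$ and the strong-approximation event, compare the discrete and continuous killing weights pathwise via the $\ln(1-u)\geq -u-u^2$ expansion and the Lipschitz bound on $\K$, then absorb the complementary events using Lemma~\ref{rto2r} and the fact that $(1-\beta)/4>2\beta$ when $\alpha>14/9$. One small technical remark: your justification for applying Proposition~\ref{prop:kbm} is slightly off, since $E$ involves $S$ and is not $\F_T$-measurable; the clean way (which the paper implicitly uses) is to start from Proposition~\ref{prop:krw} on the SRW side, or equivalently to use directly that $\xi^B$ is independent of the pair $(B,S)$ so that $\Proba[\tau^B(2r)<\tau^B(\Delta)\mid B,S]=\exp(-\int_0^{\tau^B(2r)}\K(B_s)\,ds)$.
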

\begin{proof}
    Let $T_0$, $c,c',a,a'$ be absolute constants as in Lemma \ref{rto2r}, and define
    \be
    \left\{
    \begin{array}{ll}
        A_r^B &:= \fix{\{}\tau^B(2r) < \tau^B(r/2)\fix{\}} \cap \fix{\{}\tau^B(2r)<T_0r^{2-2\beta}\fix{\}}\\
        C_r &:= \fix{\{}\Theta(B,S,T_0r^{2-2\beta})\leq \theta_r\fix{\}}
    \end{array}
    \right.
    \ee
    \fix{Theorem~\ref{strong}} with $n = T_0r^{2-2\beta}$ implies
    \be\label{eq:strong:approx}
        \Proba[C_r^c] \leq c\exp\big(-ar^{(1-\beta)/4}\big)
    \ee
      for a constant $a$ which is absolute since it depends only on $\beta, T_0$ which are themselves absolute constants. By Proposition \ref{prop:krw}, for all $x \in \partial B(r)$
    \be
        \begin{aligned}
        \Proba_x[\tau^S(2r-\theta_r) < \tau^S(\Delta)] &\geq \Proba_x\left[\fix{\big\{}\tau^S(2r-\theta_r) < \tau^S(\Delta)\fix{\big\}}\cap C_r \cap A^B_r\right]\\
        &= \E_x\left[\mathbbm{1}_{C_r \cap A^B_r}\prod_{i=0}^{\tau^S(2r-\theta_r)-1}(1-\K(S_i))\right]\\
        &\geq \E_x\left[\mathbbm{1}_{C_r \cap A^B_r}\prod_{i=0}^{\tau^B(2r)-1}(1-\K(S_i))\right]
        \end{aligned}
    \ee
    where the last line is justified by the fact that on the event $C_r \cap A_r^B$, $\tau^S(2r-\theta_r) \leq \tau^B(2r)$: if $B$ leaves the ball $B(2r)$, $S$ (which is close) must have already left a slightly smaller ball $B(2r-\theta_r)$. The interest of this equation is that on the event $C_r \cap A_r^B$, the killing is almost the same for the KBM and KRW. First of all, on the event $C_r \cap A_r^B$, using that $\ln(1-u)\geq -u-u^2$ for $|u|$ small enough:
    \be
        \begin{aligned}
            \prod_{i=0}^{\tau^B(2r)-1}(1-\K(S_i)) &= \exp\left(\sum_{i=0}^{\tau^B(2r)-1}\ln(1-\K(S_i))\right) &\geq \exp\left(-\sum_{i=0}^{\tau^B(2r)-1}(\K(S_i)+\K(S_i)^2)\right).
        \end{aligned}
    \ee
    On $C_r \cap A_r^B$, $\tau^B(r/2) \leq \tau^B(2r)$, and hence for $t \leq \tau^B(2r)$, $|B_t| \geq r/2$ and $|S_t| \geq r/2 - \theta_r \geq r/4$ (for $r$ large enough), and also $\tau^B(2r) \leq T_0r^{2-2\beta}$ so there exists an absolute constant $a$ such that
    \be
        \sum_{i=0}^{\tau^B(2r)-1}\K(S_i)^2 \leq T_0r^{2-2\beta}16^{-\alpha}r^{-2\alpha} \leq ar^{(2-3\alpha)/2}
    \ee
    and another absolute constant $a$ such that
    \be\label{eq:control:killing}
        \begin{aligned}
        \left|\sum_{i=0}^{\tau^B(2r)-1}\K(S_i)-\int_{0}^{\tau^B(2r)}\K(B_s)ds \right|
        &\leq \tau^B(2r)\sup_{0\leq t \leq \tau^B(2r)}|\K(B_t)-\K(S_{\lfloor t \rfloor})|\\
        &\leq Tr^{2-2\beta}\theta_r\sup_{|u|\geq r/4}|\K'(u)| \leq cr^{(6-5\alpha)/16}\sqrt{\ln(r)}.
        \end{aligned}
    \ee
    \fix{Choose any} $0 < \eps < \min((3\alpha-2)/2, (5\alpha-6)/16)$ (this is possible since $\alpha > 14/9 > 6/5$). Putting all these equations together and using that $\exp(1+u) \leq 1+2u$ for $u$ small enough gives, for $r$ large enough
    \be
        \begin{aligned}
            \Proba_x[\tau^S(2r-\theta_r) < \tau^S(\Delta)] 
            &\geq (1-cr^{-\eps})\E_x\left[\mathbbm{1}_{C_r \cap A_r^B} \exp\left(-\int_{0}^{\tau^B(2r)}\K(B_s)ds\right) \right]\\
            &= (1-cr^{-\eps})\Proba_x\big[\fix{\{}\tau^B(2r) < \tau^B(\Delta) \fix{\}} \cap A_r^B \cap C_r\big]\\
            &\geq (1-cr^{-\eps})\Proba_x\big[\fix{\{}\tau^B(2r) < \tau^B(\Delta)\fix{\}}\cap A_r^B\big] -\Proba[C_r^c].
        \end{aligned}
    \ee
    Using~\eqref{eq:strong:approx}, Lemma \ref{rto2r} and the fact that $(1-\beta)/4 > 2\beta$ (which holds since $a > 14/9$) gives
    \be
        \Proba_x[\tau^S(2r-\theta_r) < \tau^S(\Delta)] \geq (1-cr^{\eps})\Proba_r\big[\tau^B(2r) < \tau^B(\Delta)\big]
    \ee
    which is the statement of the lemma.
\end{proof}
We are ready to analy\fix{z}e precisely the event $\fix{\{}\tau^S(2r-\theta_r) < \tau^S(\Delta)\fix{\}}$. For all $r \geq 1$, define similarly to $A^B_r$, for all $L \geq 1$,
\be
    A_r^S = A_r^S(L):= \fix{\{}\tau^S(2r-\theta_r) \leq Lr^{2-2\beta}\fix{\}} \cap \fix{\{}\tau^S(2r-\theta_r) \leq \tau^S((r+\theta_r)/2)\fix{\}}.
\ee
As in the continuum, our goal is to show that $A_r^S$ is the only way for the event $\fix{\{}\tau^S(2r-\theta_r) < \tau^S(\Delta)\fix{\}}$ to happen.
\begin{Lem}\label{lem:ub:KRW}
    There exist $L_0 \geq 1$ and absolute constants $a,c >0$ such that for all $r$ large enough, $x \in \partial B(r)$, denoting $A^S_r = A^S_r(L_0)$,
    \be
        \Proba_x\big[\fix{\{}\tau^S(2r-\theta_r) < \tau^S(\Delta)\fix{\}} \setminus A_r^S\big] \leq c\exp(-ar^{2\beta})\sup_{z \in \partial B(r)}\Proba_z\big[\tau^S(2r-\theta_r) < \tau^S(\Delta)\big] 
    \ee
\end{Lem}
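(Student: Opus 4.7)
The plan is to follow the structure of the continuum proof of Lemma \ref{rto2r}, carefully adapted to the discrete setting where rotational invariance is lost. Introduce the two bad events
\[
B_r^S := \bigl(\tau^S((r+\theta_r)/2) < \tau^S(2r-\theta_r)\bigr), \qquad C_r^S := \bigl(\tau^S(2r-\theta_r) > L_0 r^{2-2\beta}\bigr),
\]
so that the complement $(\tau^S(2r-\theta_r) < \tau^S(\Delta)) \setminus A_r^S$ equals $(\tau^S(2r-\theta_r) < \tau^S(\Delta)) \cap (B_r^S \cup C_r^S)$, and a union bound reduces the proof to controlling each piece.

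For the $C_r^S$ piece, on the intersection with $(\tau^S(2r-\theta_r) < \tau^S(\Delta))$ the walk stays inside $B(2r-\theta_r)$ during at least the first $L_0 r^{2-2\beta}$ steps, so $|S_i| \le 2r$ and hence $\K(S_i) \ge (2r)^{-\alpha}$. Proposition \ref{prop:krw} then yields
\[
\Proba_x\bigl[C_r^S \cap (\tau^S(2r-\theta_r) < \tau^S(\Delta))\bigr] \le (1-(2r)^{-\alpha})^{L_0 r^{2-2\beta}} \le \exp(-2^{-\alpha} L_0 r^{2\beta}),
\]
exactly as in the continuum case, and $L_0$ will be chosen large enough to make this exponent strictly larger than the one in the lower bound from Lemma \ref{lem:lb:SRW}.

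For the $B_r^S$ piece, apply the strong Markov property successively at $\tau^S((r+\theta_r)/2)$ and at $\tau^S(\partial B(r))$; the latter must occur since the walk has unit steps and $(r+\theta_r)/2 < r < 2r-\theta_r$. This produces the factorization
\[
\Proba_x\bigl[B_r^S \cap (\tau^S(2r-\theta_r) < \tau^S(\Delta))\bigr] \le \sup_{y \in \partial B((r+\theta_r)/2)} \Proba_y\bigl[\tau^S(\partial B(r)) < \tau^S(\Delta)\bigr] \cdot \sup_{z \in \partial B(r)} \Proba_z\bigl[\tau^S(2r-\theta_r) < \tau^S(\Delta)\bigr].
\]
The first supremum is then bounded by a direct repetition of the two-step argument of Lemma \ref{lem:ub:SRW}: split according to whether $\tau^S(\partial B(r)) < r^{2-2\beta}$, handled by the discrete large-deviation bound \eqref{eq:LDP:SRW} (the walk must travel distance at least $r/4$ in time $r^{2-2\beta}$, which costs at most $c \exp(-a r^{2\beta})$), or $\tau^S(\partial B(r)) \ge r^{2-2\beta}$, in which case the walk remains inside $B(r)$ throughout, so $\K(S_i) \ge c r^{-\alpha}$ and Proposition \ref{prop:krw} gives $\exp(-c r^{2\beta})$. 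This already has the right form $c \exp(-a r^{2\beta}) \cdot \sup_{z \in \partial B(r)} \Proba_z[\cdots]$.

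To merge the two estimates, note that Lemma \ref{lem:lb:SRW} combined with the continuum lower bound of Lemma \ref{rto2r} gives $\sup_{z \in \partial B(r)} \Proba_z[\tau^S(2r-\theta_r) < \tau^S(\Delta)] \ge c \exp(-a r^{2\beta})$ for some absolute constants. Choosing $L_0$ large enough that $2^{-\alpha} L_0$ exceeds this exponent by a definite margin allows the $C_r^S$ bound to be absorbed into the desired ratio form, completing the proof. The main obstacle is precisely the lack of rotational invariance: whereas in the continuum one could reduce a probability starting on $\partial B(r)$ to a single representative value, here one must work with suprema over discrete spheres throughout, which is why the statement of the lemma involves the supremum on the right-hand side and why the matching lower bound of Lemma \ref{lem:lb:SRW} is needed to convert the absolute exponential bounds into the required ratio form.
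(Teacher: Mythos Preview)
Your proof is correct and follows essentially the same route as the paper: the identical decomposition into the two bad events (the paper calls them $E_r^S$ and $F_r^S$), the same strong Markov factorization for the inner-ball event with the intermediate stop at $\partial B(r)$, the same killing argument for the time event, and the same use of Lemma~\ref{lem:lb:SRW} together with the continuum lower bound of Lemma~\ref{rto2r} to convert the absolute exponential bound on the $C_r^S$ piece into the ratio form. The only cosmetic difference is that the paper invokes Lemma~\ref{lem:ub:SRW} directly to bound $\sup_{y}\Proba_y[\tau^S(r)<\tau^S(\Delta)]$, whereas you spell out the underlying two-case argument; both are valid.
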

\begin{proof}
    \fix{We adapt the arguments from the proof of Lemma~\ref{rto2r} to the discrete setting}.
    We note that $\fix{\{}\tau^S(2r-\theta_r) < \tau^S(\Delta)\fix{\}} \setminus A^S_r = \fix{\big\{}\tau^S(2r-\theta_r) < \tau^S(\Delta)\fix{\big\}} \cap \fix{\{}E_r^S \cup F^S_r\fix{\}}$
    with 
    \be
        \left\{
            \begin{array}{ll}
                E_r^S = \fix{\big\{}\tau^S((r+\theta_r)/2)<\tau^S(2r-\theta_r)\fix{\big\}}\\
                F_r^S = \fix{\big\{}\tau^S(2r-\theta_r) > Lr^{2-2\beta}\fix{\big\}}.
            \end{array}
        \right.
    \ee
    On the one hand, by the strong Markov property and Lemma \ref{lem:ub:SRW}
    \be
        \begin{aligned}
            \Proba_x\big[\fix{\{}\tau^S(2r-\theta_r) &< \tau^S(\Delta)\fix{\}} \cap E_r^S\big]\\
            &\leq \sup_{y \in \partial B((r+\theta_r)/2)}\left\{\Proba_{y}[\tau^S(r) < \tau^S(\Delta)]\right\} \sup_{z \in \partial B(r)}\left\{\Proba_z[\tau^S(2r-\theta_r) < \tau^S(\Delta)]\right\}\\
            &\leq c\exp(-ar^{2\beta})\sup_{z \in \partial B(r)}\left\{\Proba_z[\tau^S(2r-\theta_r) < \tau^S(\Delta)]\right\}
        \end{aligned}
    \ee
    On the other hand, since on the event $\fix{\{}\tau^S(2r-\theta_r) < \tau^S(\Delta)\fix{\}} \cap F^S_r$ the killing is at least $\K(2r)$ for a time at least $Lr^{2-2\beta}$,
    \be
        \begin{aligned}
            \Proba_x\big[\fix{\{}\tau^S(2r-\theta_r) < \tau^S(\Delta)\fix{\}} \cap F_r^S\big] 
            \leq \exp(-cLr^{2\beta}) 
            &\overset{\eqref{eq:lower:bound:KBM}}{\leq} c\exp(-ar^{2\beta}) \Proba_r\big[\tau^B(2r)< \tau^B(\Delta)\big]\\
            &\overset{\ref{lem:lb:SRW}}{\leq} c\exp(-ar^{2\beta})\Proba_x[\tau^S(2r-\theta_r) < \tau^S(\Delta)]
        \end{aligned}
    \ee
    for some fixed $L=L_0$ large enough and any $x \in \partial B(r)$. Hence,
    \be
        \Proba_x\big[\fix{\{}\tau^S(2r-\theta_r) < \tau^S(\Delta)\fix{\}} \setminus A_r^S\big] \leq c\exp(-ar^{2\beta})\sup_{z \in \partial B(r)}\Proba_x\big[\tau^S(2r-\theta_r) < \tau^S(\Delta)\big].
    \ee
\end{proof}
For future use, we also need to compare the event $\fix{\{}\tau^S(2r-\theta_r) < \tau^S(\Delta)\fix{\}}$ \fix{to its continuous} analog.
\begin{Lem}\label{lem:ub:KRW/KBM}
    There exist $L_0 \geq 1$ (the same as in the preceding lemma) and $c, \eps >0$ such that for all $r$, $x \in \partial B(r)$
    \be
        \Proba_x\big[\fix{\{}\tau^S(2r-\theta_r) < \tau^S(\Delta)\fix{\}} \cap A_r^S\big] \leq (1+cr^{-\eps})\Proba_r\big[\tau^B(2r) < \tau^B(\Delta)\big]
    \ee
\end{Lem}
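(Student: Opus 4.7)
The approach mirrors that of Lemma~\ref{lem:lb:SRW} but in the opposite direction. Working under $\Proba_x$, I would couple the SRW $S$ with the BM $B$ via the strong approximation Theorem~\ref{strong} and compare the discrete killing product $\prod(1-\K(S_i))$ with the continuous exponential $\exp(-\int \K(B_s)ds)$. Introduce the good coupling event $C_r := \{\Theta(B,S,L_0 r^{2-2\beta}) \leq \theta_r\}$, for which Theorem~\ref{strong} immediately yields $\Proba[C_r^c] \leq c\exp(-ar^{(1-\beta)/4})$.

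On $A_r^S \cap C_r$ two geometric facts will be used. First, both $S$ and $B$ remain outside $B(r/4)$ up to time $\tau^S := \tau^S(2r-\theta_r)$, because $|S_i| \geq (r+\theta_r)/2$ for $i \leq \tau^S$ (by definition of $A_r^S$) and $|B_t - S_{\lfloor t \rfloor}| \leq \theta_r$ on $C_r$. Second, $\tau^B(2r-2\theta_r) \leq \tau^S$, since at the integer time $\tau^S$ we have $|B_{\tau^S}| \geq |S_{\tau^S}| - \theta_r \geq 2r - 2\theta_r$. Combining $1-u \leq e^{-u}$ with a killing comparison of the same type as Equation~\eqref{eq:control:killing},
\begin{equation}
\left|\sum_{i=0}^{\tau^S - 1}\K(S_i) - \int_0^{\tau^S}\K(B_s)ds\right| \leq L_0 r^{2-2\beta}\,\theta_r \sup_{|u|\geq r/4}|\K'(u)| \leq cr^{(6-5\alpha)/16}\sqrt{\ln r},
\end{equation}
which is $o(1)$ because $\alpha > 6/5$, yields
\begin{equation}
\prod_{i=0}^{\tau^S-1}(1-\K(S_i)) \leq (1+cr^{-\eps'})\exp\left(-\int_0^{\tau^B(2r-2\theta_r)}\K(B_s)ds\right).
\end{equation}
Taking expectations against $\mathbbm{1}_{A_r^S \cap C_r}$, invoking Propositions~\ref{prop:krw} and~\ref{prop:kbm} together with the rotational invariance of the KBM, and handling the complement $C_r^c$ via the trivial bound $\Proba_x[\,\cdot \cap C_r^c] \leq \Proba[C_r^c]$, one obtains
\begin{equation}
\Proba_x\big[(\tau^S < \tau^S(\Delta)) \cap A_r^S\big] \leq (1+cr^{-\eps'})\Proba_r[\tau^B(2r-2\theta_r) < \tau^B(\Delta)] + \Proba[C_r^c].
\end{equation}

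It only remains to upgrade the radius from $2r-2\theta_r$ to $2r$. Since $\theta$ is increasing, $2\theta_{2r-2\theta_r} \geq 2\theta_r$ for $r$ large, so applying the strong Markov property at $\tau^B(2r-2\theta_r)$ followed by Lemma~\ref{lem:KBM:theta} at radius $2r-2\theta_r$ gives
\begin{equation}
\Proba_r[\tau^B(2r) < \tau^B(\Delta)] \geq (1-cr^{-\eps})\Proba_r[\tau^B(2r-2\theta_r) < \tau^B(\Delta)].
\end{equation}
Finally, by the lower bound \eqref{eq:lower:bound:KBM}, $\Proba_r[\tau^B(2r) < \tau^B(\Delta)] \geq c\exp(-ar^{2\beta})$, so the additive error $c\exp(-ar^{(1-\beta)/4})$ can be absorbed into a multiplicative factor $(1+cr^{-\eps})$ on $\Proba_r[\tau^B(2r)<\tau^B(\Delta)]$ \emph{precisely when} $(1-\beta)/4 > 2\beta$, that is, when $\alpha > 14/9$. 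This matching of the two exponential rates is the main obstacle and is exactly the reason why the threshold $14/9$ appears in the hypothesis of Theorem~\ref{krwcounterex}.
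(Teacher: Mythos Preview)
Your proposal is correct and follows essentially the same route as the paper: split off the bad coupling event $C_r^c$ using Theorem~\ref{strong}, compare the discrete and continuous killing on $A_r^S\cap C_r$ via the bound analogous to~\eqref{eq:control:killing}, pass to $\Proba_r[\tau^B(2r-2\theta_r)<\tau^B(\Delta)]$, then upgrade the radius with Lemma~\ref{lem:KBM:theta} and absorb the additive error $\exp(-ar^{(1-\beta)/4})$ using the lower bound~\eqref{eq:lower:bound:KBM} together with $(1-\beta)/4>2\beta$. Your justification of the radius upgrade (checking $\theta_{2r-2\theta_r}\geq\theta_r$) is in fact slightly more careful than the paper's.
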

\begin{proof}
    We use the same argument as in the proof of Lemma \ref{lem:lb:SRW}, using that with high probability the SRW and KRW are close by the strong approximation theorem and that on $A_r^S \cap C_r$ the killing is almost the same for the KBM and the KRW. More precisely, for any $z \in \partial B(r)$, by~\eqref{eq:strong:approx} and Proposition \ref{prop:krw}
    \be
        \begin{aligned}
        \Proba_z[\fix{\{}\tau^S(2r-\theta_r) < \tau^S(\Delta)\fix{\}} \cap A_r^S] 
        &\leq \Proba_z[\fix{\{}\tau^S(2r-\theta_r) < \tau^S(\Delta)\fix{\}} \cap A_r^S \cap C_r] + \Proba_z[C_r^c]\\
        &\leq \E_z\left[\mathbbm{1}_{A_r^S \cap C_r}\prod_{i=0}^{\tau^S(2r-\theta_r)-1}(1-\K(S_i))\right] + c\exp(-ar^{(1-\beta)/4}).
        \end{aligned}
    \ee
    \fix{We proceed as in Lemma~\ref{lem:lb:SRW}, but we bound from above instead of bounding from below:} using that on $C_r$, $\tau^B(2r-2\theta_r) \leq \tau^S(2r-\theta_r)$ and that $\ln(1-u)\leq u$, on the event $C_r$:
    \be
        \prod_{i=0}^{\tau^S(2r-\theta_r)-1}(1-\K(S_i)) \leq \prod_{i=0}^{\tau^B(2r-2\theta_r)-1}(1-\K(S_i)) \leq \exp\left(-\sum_{i=0}^{\tau^B(2r-2\theta_r)}\K\big(S_i\big)\right).
    \ee
    Similarly to~\eqref{eq:control:killing}, on $A_r^S \cap C_r$ we have for all $t \leq \tau^B(2r-2\theta_r), |S_{\lfloor t \rfloor}| \geq r/2$ so $|B_t|\geq |S_{\lfloor t \rfloor}| - \theta_r \geq r/4$ (for $r$ large enough). Hence on the event $A^S_r \cap C_r$:
    \be
        \begin{aligned}
            \left|\sum_{i=0}^{\tau^B(2r-2\theta_r)-1}\K(S_i)-\int_{0}^{\tau^B(2r-2\theta_r)}\K(B_s)ds \right|
            &\leq \tau^B(2r-2\theta_r)\sup_{0\leq t \leq \tau^B(2r-2\theta_r)}|\K(B_t)-\K(S_{\lfloor t \rfloor})|\\
            &\leq L_0r^{2-2\beta}\theta_r\sup_{|u|\geq r/4}|\K'(u)| \leq cr^{(6-5\alpha)/16}\sqrt{\ln(r)} < r^{-\eps}.
        \end{aligned}
    \ee
    For any $0<\eps<(5\alpha-6)/16$ and $r$ large enough. Putting everything together, using that $\exp(1+u) \leq 2u$ for $u$ small enough, we get that for $r$ large enough and $z \in \partial B(r)$,
    \be
        \begin{aligned}
            \Proba_z[\fix{\{}\tau^S(2r-\theta_r) < \tau^S(\Delta)\fix{\}} \cap A_r^S] 
            &\leq \E_x\left[\mathbbm{1}_{A_r^S \cap C_r}\exp(r^{-\eps})\exp\left(-\int_0^{\tau^B(2r-2\theta_r)}\K(B_s)ds\right)\right]\\
            &~~~~~~~~+ c \exp(-ar^{(1-\beta)/4})\\
            &\leq (1+2r^{-\eps})\Proba_r[\tau^B(2r-2\theta_r) < \tau^B(\Delta)] + c \exp(-ar^{(1-\beta)/4}).
            \end{aligned}
    \ee
    But Lemma \ref{lem:KBM:theta} and the strong Markov property imply for all $0 < \eps < \alpha -7/4$ and $r$ large enough
    \be\label{eq:r:theta}
        \begin{aligned}
            \Proba_r[\tau^B(2r)<\tau^B(\Delta)] 
            &\geq \Proba_r[\tau^B(2r-2\theta_r)<\tau^B(\Delta)]\Proba_{2r-\theta_r}[\tau^B(2r)<\tau^B(\Delta)]\\
            &\geq (1-r^{-\eps})\Proba_r[\tau^B(2r-2\theta_r)<\tau^B(\Delta)].
        \end{aligned}
    \ee
    The last two equations together imply, for any $0<\eps < \min((6-5\alpha)/16,\alpha-7/4)$, some constant $c>0$ , for all $r$ large enough and $z \in \partial B(r)$
    \be
        \Proba_z[\fix{\{}\tau^S(2r-\theta_r) < \tau^S(\Delta)\fix{\}} \cap A_r^S] \leq (1+cr^{-\eps})\Proba_r[\tau^B(2r)<\tau^B(\Delta)] + c \exp(-ar^{(1-\beta)/4}).
    \ee
    Since $(1-\beta)/4 > 2\beta$, Lemma \ref{lbrto2r} imply for some constant $c>0$
    \be
       \Proba_z[\fix{\{}\tau^S(2r-\theta_r) < \tau^S(\Delta)\fix{\}} \cap A_r^S] 
       \leq (1+cr^{-\eps})\Proba_r[\tau^B(2r)<\tau^B(\Delta)].
    \ee
\end{proof}
We put all these lemmas together to prove a key result:
\begin{Lem}\label{lem:ub}
    There exists $L_0 \geq 1$ (the same as in the preceding lemma), $\eps >0$, $c,a>0$ such that for all $r$ large enough and all $x \in \partial B(r)$,
    \be
        \Proba_x[\tau^S(2r-\theta_r)<\tau^S(\Delta)] \leq (1+cr^{-\eps}) \Proba_r[\tau^B(2r)<\tau^B(\Delta)]
    \ee
\end{Lem}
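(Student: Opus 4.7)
The plan is to decompose $\Proba_x[\tau^S(2r-\theta_r)<\tau^S(\Delta)]$ according to whether the good event $A^S_r$ (with the constant $L_0$ fixed by Lemma \ref{lem:ub:KRW}) holds, and then bound each of the two pieces by $\Proba_r[\tau^B(2r)<\tau^B(\Delta)]$ up to a multiplicative error of order $1 + O(r^{-\eps})$. For the intersection with $A^S_r$, Lemma \ref{lem:ub:KRW/KBM} directly yields
\be
    \Proba_x\big[(\tau^S(2r-\theta_r)<\tau^S(\Delta)) \cap A^S_r\big]
    \leq (1+cr^{-\eps})\,\Proba_r\big[\tau^B(2r)<\tau^B(\Delta)\big],
\ee
uniformly in $x\in\partial B(r)$, which is already the desired form.

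The remaining piece is $\Proba_x\big[(\tau^S(2r-\theta_r)<\tau^S(\Delta))\setminus A^S_r\big]$. By Lemma \ref{lem:ub:KRW} this is bounded by $c\exp(-ar^{2\beta})\, M(r)$, where
\be
    M(r) := \sup_{z\in\partial B(r)} \Proba_z\big[\tau^S(2r-\theta_r)<\tau^S(\Delta)\big].
\ee
Here a little care is needed because $M(r)$ is exactly the quantity we are trying to bound. The remedy is a self-bounding (bootstrap) argument: taking the supremum over $x\in\partial B(r)$ in the full decomposition gives
\be
    M(r) \leq (1+cr^{-\eps})\,\Proba_r\big[\tau^B(2r)<\tau^B(\Delta)\big] + c\exp(-ar^{2\beta})\,M(r).
\ee
For $r$ large enough, $c\exp(-ar^{2\beta})\leq \tfrac{1}{2}$, so this term can be absorbed to the left-hand side, yielding
\be
    M(r) \leq \frac{1+cr^{-\eps}}{1-c\exp(-ar^{2\beta})}\,\Proba_r\big[\tau^B(2r)<\tau^B(\Delta)\big]
    \leq (1+c'r^{-\eps})\,\Proba_r\big[\tau^B(2r)<\tau^B(\Delta)\big],
\ee
since $\exp(-ar^{2\beta})$ is negligible compared to $r^{-\eps}$ for any fixed $\eps>0$. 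As this bound is uniform in $x\in\partial B(r)$, the lemma follows.

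The only subtle point is the bootstrap step, which works because Lemma \ref{lem:ub:KRW} contributes a factor that decays stretched-exponentially in $r$, whereas Lemma \ref{lem:ub:KRW/KBM} loses only a polynomial factor $1+cr^{-\eps}$; I do not expect any genuine obstacle beyond tracking the values of $\eps$ arising from Lemmas \ref{lem:lb:SRW} and \ref{lem:ub:KRW/KBM} and taking the minimum.
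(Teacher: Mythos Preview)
Your proposal is correct and follows essentially the same approach as the paper: both combine Lemma~\ref{lem:ub:KRW/KBM} for the $\cap A_r^S$ part with Lemma~\ref{lem:ub:KRW} for the complement and then close the self-referential inequality for $M(r)$. The paper phrases the bootstrap slightly differently, by picking a maximizer $x^{\star}\in\partial B(r)$ so that the supremum in Lemma~\ref{lem:ub:KRW} equals $\Proba_{x^{\star}}[\tau^S(2r-\theta_r)<\tau^S(\Delta)]$ and the rearrangement is immediate, but this is the same argument as yours.
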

\begin{proof}
    Let $L_0$ be as in the preceding lemma, $\eps \in (0, \min(\alpha -7/4, (5\alpha-6)/16)$. Let $r$ large enough and $x^{\star} \in \partial B(r)$ maximi\fix{z}ing $\Proba_x[\tau^S(2r-\theta_r)< \tau^S(\Delta)]$ \fix{over the discrete set $\partial B(r)$}. Then, Lemma \ref{lem:ub:KRW} implies 
    \be\label{eq:reverse}
        \Proba_{x^{\star}}[\fix{\{}\tau^S(2r-\theta_r)<\tau^S(\Delta)\fix{\}}\cap A_r^S] \geq (1-c\exp(ar^{-2\beta}))\Proba_{x^{\star}}[\tau^S(2r-\theta_r)<\tau^S(\Delta)]
    \ee
    so
    \be
        \begin{aligned}
            \Proba_r[\tau^B(2r)<\tau^B(\Delta)]  
            &\overset{\ref{lem:ub:KRW/KBM}}{\geq} (1-cr^{-\eps})\Proba_{x^{\star}}[\fix{\{}\tau^S(2r-\theta_r)<\tau^S(\Delta)\fix{\}} \cap A_r^S]\\
            &\overset{\eqref{eq:reverse}}{\geq} (1-c'r^{-\eps})\Proba_{x^{\star}}[\tau^S(2r-\theta_r)<\tau^S(\Delta)].
        \end{aligned}
    \ee
\end{proof}

We now prove the analog of Lemma \ref{Arprime}, i.e. we prove that the KRW does not wind around on the event $(\tau^S(2r)< \tau^S(\Delta)$ (since it stays close to the KBM which does not wind around).

\begin{Lem}\label{lem:KBM:noturn}
    There exist $c,a>0$ such that for all $r$ large enough, $x \in \partial B(r)$,
    \be
        \begin{aligned}
        \Proba_x\bigg[\tau^S&(2r-\theta_r)<\tau^S(\Delta) , \big|\arg\big(S_{\tau^S(2r-\theta_r)}\big)-\arg(S_0)\big|\geq r^{-\beta/2} + \frac{\theta_r}{r}\bigg]\\
        &\leq c\exp(-ar^{\beta})\Proba_x[\tau^S(2r-\theta_r)<\tau^S(\Delta)].
        \end{aligned}
    \ee
\end{Lem}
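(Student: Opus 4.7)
The plan is to follow the proof strategy of Lemma \ref{Arprime}, adapted to the discrete setting through the strong-approximation coupling of Theorem \ref{strong}. Write $\mathcal{E} := \{\tau^S(2r-\theta_r) < \tau^S(\Delta)\}$ and $\mathcal{W} := \{|\arg(S_{\tau^S(2r-\theta_r)})-\arg(x)| \geq r^{-\beta/2}+\theta_r/r\}$, and let $C_r := \{\Theta(B,S,L_0 r^{2-2\beta}) \leq \theta_r\}$ be the good coupling event. The event of interest $\mathcal{E} \cap \mathcal{W}$ decomposes into three pieces: $\mathcal{E} \setminus A_r^S$, $\mathcal{E} \cap A_r^S \cap C_r^c$, and $\mathcal{E} \cap A_r^S \cap C_r \cap \mathcal{W}$. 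The first piece is controlled by Lemma \ref{lem:ub:KRW}, and Lemmas \ref{lem:ub} and \ref{lem:lb:SRW} convert $\sup_z \Proba_z[\mathcal{E}]$ into $\Proba_x[\mathcal{E}]$ up to factors $(1+cr^{-\eps})$, yielding a bound of order $c\exp(-ar^{2\beta})\Proba_x[\mathcal{E}]$. The second is absorbed because \eqref{eq:strong:approx} gives $\Proba[C_r^c] \leq c\exp(-ar^{(1-\beta)/4})$; the assumption $\alpha > 14/9$ forces $\beta < 1/9$, hence $(1-\beta)/4 > 2\beta$, and combining with the lower bound $\Proba_x[\mathcal{E}] \geq c\exp(-ar^{2\beta})$ coming from Lemmas \ref{lem:lb:SRW} and \ref{rto2r} absorbs it into $c\exp(-ar^\beta)\Proba_x[\mathcal{E}]$.

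The substantive work lies in bounding $\Proba_x[\mathcal{E} \cap A_r^S \cap C_r \cap \mathcal{W}]$. A simple geometric fact --- two points $u,v$ with $|u-v| \leq \theta_r$ and $|u|,|v| \geq r/4$ have $|\arg(u)-\arg(v)| \leq c\theta_r/r$ --- together with the bounds $|S_t| \geq (r+\theta_r)/2$ on $A_r^S$ and $|B_t - S_{\lfloor t\rfloor}| \leq \theta_r$ on $C_r$, shows that on $A_r^S \cap C_r$ the event $\mathcal{W}$ implies $|\arg(B_{\tau^S(2r-\theta_r)})-\arg(x)| \geq r^{-\beta/2}/2$ for $r$ large. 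Writing the skew product $\arg(B_t) = \arg(x) + W_{R_t}$ with $\beta_t = |B_t|$ independent of $W$, and noting that $R_{\tau^S(2r-\theta_r)} \leq 16 L_0 r^{-2\beta}$ because $\beta_t \geq r/4$ and $\tau^S(2r-\theta_r) \leq L_0 r^{2-2\beta}$ on $A_r^S \cap C_r$, this angle condition forces the event $G := \{\sup_{t \leq 16 L_0 r^{-2\beta}} |W_t| \geq r^{-\beta/2}/2\}$, whose probability is at most $4\exp(-ar^\beta)$ by the reflection principle.

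The crux is then the same factorization as in Lemma \ref{Arprime}. Applying Proposition \ref{prop:krw} followed by the killing-comparison inequality from the proof of Lemma \ref{lem:ub:KRW/KBM} (valid on $A_r^S \cap C_r$), and then dropping the indicator $\mathbbm{1}_{A_r^S \cap C_r}$ by non-negativity of the integrand,
\[ \Proba_x\bigl[\mathcal{E} \cap A_r^S \cap C_r \cap G\bigr] \leq (1+cr^{-\eps})\,\E_x\Bigl[\mathbbm{1}_G \exp\Bigl(-\textstyle\int_0^{\tau^B(2r-2\theta_r)}\K(B_s)\,ds\Bigr)\Bigr]. \]
Because $\K$ is rotationally invariant, $\K(B_s) = \K(\beta_s)$ and $\tau^B(2r-2\theta_r)$ depend only on the radial Bessel process $\beta$, whereas $G$ depends only on $W$; the independence $W \perp \beta$ from the skew product therefore factorizes the right-hand side as $(1+cr^{-\eps})\Proba[G]\Proba_x[\tau^B(2r-2\theta_r)<\tau^B(\Delta)]$. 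Equation \eqref{eq:r:theta} replaces $2r-2\theta_r$ by $2r$ at a cost $(1+cr^{-\eps})$, and Lemma \ref{lem:lb:SRW} converts $\Proba_r[\tau^B(2r)<\tau^B(\Delta)]$ back to $\Proba_x[\mathcal{E}]$, yielding the desired bound. The main obstacle is precisely maintaining this $W$/$\beta$ separation despite the coupling making $S$ a function of both components of $B=(\beta, W\circ R)$: the radial nature of $\K$ is what lets us push all $W$-dependence into the harmless indicator $\mathbbm{1}_G$, so the rotational symmetry exploited in Section \ref{counterex} transports to the SRW with only the $\theta_r/r$ correction appearing in the statement.
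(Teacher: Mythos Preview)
Your proposal is correct and follows essentially the same approach as the paper: the same three-piece decomposition via $A_r^S$ and $C_r$, the same conversion to the KBM via the killing comparison of Lemma~\ref{lem:ub:KRW/KBM}, the same passage from the argument deviation of $S$ to that of $B$ via the $\theta_r$-closeness on $C_r$, and the same factorization through the skew product by enlarging to the radial event $G$ on $W$. The only cosmetic difference is that you carry a generic constant in the geometric bound (getting $r^{-\beta/2}/2$ where the paper uses $|S_{\tau^S(2r-\theta_r)}|\geq r$ to get exactly $r^{-\beta/2}$), which is immaterial.
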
  
\begin{proof}
    \fix{Choose} $L_0, \eps >0$ as in the preceding lemma. As in the proof of Lemma \ref{Arprime}, by union bound
    \be\label{eq:turn:dec}
        \begin{aligned}
            \Proba_x\big[\tau^S(2r-\theta_r)&<\tau^S(\Delta) , |\arg(S_{\tau^S(2r-\theta_r)}-\arg(S_0)|\geq 2r^{-\beta/2}\big]\\
            &\leq \Proba_x[\fix{\{}\tau^S(2r-\theta_r)<\tau^S(\Delta)\fix{\}} \setminus A_r^S] + \Proba_x[C_r^c] + \Proba_x\big[\fix{\big\{}\tau^S(2r-\theta_r)<\tau^S(\Delta)\fix{\big\}} \cap D_r\big]
        \end{aligned}
    \ee
    with
    \be
        D_r = A_r^S \cap C_r \cap \fix{\bigg\{}|\arg(S_{\tau^S(2r-\theta_r)}-\arg(S_0)|> r^{-\beta/2}+\frac{\theta_r}{r}\fix{\bigg\}}.
    \ee
    The first term on the RHS of~\eqref{eq:turn:dec} is bounded by Lemma~\ref{lem:ub:KRW} and Lemma~\ref{lem:ub}:
    \be\label{eq:bound:A}
       \Proba_x\big[\fix{\big\{}\tau^S(2r-\theta_r)<\tau^S(\Delta)\fix{\big\}} \setminus A_r^S\big] \leq c\exp(-ar^{2\beta})\Proba_r\big[\tau^B(2r)<\tau^B(\Delta)\big]. 
    \ee
    and the second one by~\eqref{eq:strong:approx}. To control the third term, we use the computations of the proof of Lemma~\ref{lem:ub:KRW/KBM} (we do not repeat them): on the event $A^S_r \cap C_r$, the killing is almost the same for the KRW and the BRW, so
    \be\label{eq:kbm:B}
        \Proba_x[\fix{\{}\tau^S(2r-\theta_r)<\tau^S(\Delta)\fix{\}} \cap D_r] \leq (1+cr^{-\eps})\Proba_r\big[\fix{\big\{}\tau^B(2r-2\theta_r)<\tau^B(\Delta)\fix{\big\}}\cap D_r\big].
    \ee
    Besides, on the event $D_r$, we have 
    \be
        \big|\arg\big(B_{\tau^S(2r-\theta_r)}\big)-\arg\big(S_{\tau^S(2r-\theta_r)}\big)\big| \leq \frac{\theta_r}{r}
    \ee
    since $|S_{\tau^S(2r-\theta_r)}|= 2r-\theta_r \geq r$ (for $r$ large enough) and $B_{\tau^S(2r-\theta_r)} \in B\big(S_{\tau^S(2r-\theta_r)}, \theta_r\big)$. Hence,
    \be\label{eq:arg:KBM}
        \begin{aligned}
            |\arg(B_{\tau^S(2r-\theta_r)})-\arg(x)| &\geq |\arg(S_{\tau^S(2r-\theta_r)})-\arg(x)| - |\arg(B_{\tau^S(2r-\theta_r)})-\arg(S_{\tau^S(2r-\theta_r)})|\\
            &> r^{-\beta/2}.
        \end{aligned}
    \ee
    Also, on $D_r$, since for all $t \leq \tau^S(2r-\theta_r)$ and $r$ large enough, 
    \be
        |B_t| \geq \big|S_{\lfloor t \rfloor}\big| -\theta_r \geq (r+\theta_r)/2 \geq r/4
    \ee
    and since $\tau^S(2r-\theta_r) \leq L_0r^{2-2\beta}$,
    \be
        R_{\tau^S(2r-\theta_r)} = \int_0^{L_0r^{2-2\beta}}\frac{ds}{|B_s|^2} \leq 16L_0r^{-2\beta}.
    \ee
    Together with \eqref{eq:arg:KBM} and the skew product decomposition, this implies that on $D_r$, 
    \be\label{eq:KBM:sup}
        \sup_{[0,16L_0r^{-2\beta}]}|W_t| \geq r^{-\beta/2}. 
    \ee
    As in the proof of Lemma \ref{Arprime}, since the radial and tangential components of the skew-product decomposition of $B$ are independent and since $\fix{\big\{}\tau^B(2r-2\theta_r)<\tau^B(\Delta)\fix{\big\}}$ involves only the radial component,
    \be\label{eq:tauB}
    	\begin{aligned}
    	\Proba_x\big[\fix{\{}\tau^B(2r-2\theta_r)<\tau^B(\Delta)\fix{\}} \cap D_r\big]
	&\leq \Proba_x\bigg[\tau^B(2r-2\theta_r)<\tau^B(\Delta) , \sup_{[0,16L_0r^{-2\beta}]}|W_t| \geq r^{-\beta/2}\bigg]\\
	&\overset{\eqref{eq:LDP:twosided}}{\leq}c\exp(-ar^{\beta})\Proba_r\big[\tau^B(2r-2\theta_r)<\tau^B(\Delta)\big].
	\end{aligned}
    \ee
    Putting everything together, we obtain
        \be\label{eq:bound:D}
        \begin{aligned}
            \Proba_x\big[\fix{\big\{}\tau^S(2r-\theta_r)<\tau^S(\Delta)\fix{\big\}}\cap D_r\big] 
            &\overset{\eqref{eq:kbm:B}, \eqref{eq:tauB}}{\leq} c\exp(-ar^{\beta})\Proba_r[\tau^B(2r-2\theta_r)<\tau^B(\Delta)] \\
            &\overset{\eqref{eq:r:theta}}{\leq} c'\exp(-ar^{\beta})\Proba_r[\tau^B(2r)<\tau^B(\Delta)]
        \end{aligned}
    \ee
    Inserting the bounds of~\eqref{eq:bound:A},~\eqref{eq:strong:approx} and~\eqref{eq:bound:D} in~\eqref{eq:turn:dec} gives
    \be
    	\begin{aligned}
    	\Proba_x\big[\tau^S(2r-\theta_r)&<\tau^S(\Delta) , |\arg(S_{\tau^S(2r-\theta_r)}-\arg(S_0)|\geq 2r^{-\beta/2}\big]\\
	&\leq c\exp(-ar^{\beta})\Proba_r[\tau^B(2r)<\tau^B(\Delta)] + c\exp(-ar^{(1-\beta)/4})\\
	&\leq c'\exp(-ar^{\beta})\Proba_r[\tau^B(2r)<\tau^B(\Delta)]
	\end{aligned}
   \ee
   by the lower bound of Lemma \ref{rto2r} since $(1-\beta)/4 > 2\beta$. We conclude using Lemma \ref{lem:lb:SRW}.
   \end{proof}

From now on, the proof is strictly the same as the one for the KBM. For any $r >0$, define the sequence $(r_n)_{n \geq 0}$ by $r_0 =r$ and $r_{n+1}=2r_n-\theta_{r_n}$. Then, the following holds:
\begin{Lem}
    For any $\delta, \eta >0$, there exists $\rho > 0$ such that for all $r \geq \rho$, for all $x \in \partial B(r)$, for all $n \geq 0$,
    \be
        \Proba_x\left[\tau^S(r_n)<\tau^S(\Delta) , |\arg(S_{\tau(r_n))-\arg(x)}|\geq \eta\right] \leq \delta \Proba_x[\tau^S(r_n)<\tau^S(\Delta)].
    \ee
\end{Lem}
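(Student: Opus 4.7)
The plan is to iterate Lemma \ref{lem:KBM:noturn} across the scales $r_0 < r_1 < \cdots < r_n$, exactly as Lemma \ref{rtoR} is deduced from Lemma \ref{Arprime} in the continuum. First observe that since $\theta_r = r^{3(1-\beta)/4}\sqrt{\ln r} = o(r)$, for $r$ large enough the recursion $r_{i+1} = 2r_i - \theta_{r_i}$ yields $r_{i+1} \geq (3/2) r_i$, so $r_i \geq (3/2)^i r$. In particular $\sum_{i \geq 0} \bigl(r_i^{-\beta/2} + \theta_{r_i}/r_i\bigr)$ converges and tends to $0$ as $r \to \infty$, and the same holds for $\sum_{i \geq 0} c\exp(-a r_i^{\beta})$.

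Next, I would apply the strong Markov property at the successive stopping times $\tau^S(r_i)$, $i=0,\dots,n-1$. This gives
\begin{equation}
\Proba_x\!\Bigl[\bigl(\tau^S(r_n)<\tau^S(\Delta)\bigr) \cap \bigcap_{i=0}^{n-1}\!\bigl(|\arg(S_{\tau^S(r_{i+1})})-\arg(S_{\tau^S(r_i)})|\leq r_i^{-\beta/2}+\theta_{r_i}/r_i\bigr)\Bigr]
= \prod_{i=0}^{n-1} p_i,
\end{equation}
where each $p_i$ is the probability, starting from some point on $\partial B(r_i)$, to exit $B(r_i)$ into $\partial B(r_{i+1})$ before being killed while keeping a small angular increment. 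By Lemma \ref{lem:KBM:noturn} applied at scale $r_i$, each $p_i$ is at least $\bigl(1 - c\exp(-a r_i^{\beta})\bigr)\, q_i$, where $q_i$ is the unconditional survival-and-exit probability at scale $r_i$. Multiplying, the product telescopes to $\Proba_x[\tau^S(r_n)<\tau^S(\Delta)]$ times $\prod_i (1-c\exp(-a r_i^{\beta}))$, which by the geometric growth of $r_i$ is at least $1-\delta$ for $r$ large enough (uniformly in $n$).

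Finally, on the event inside the big intersection, the triangle inequality gives
\begin{equation}
|\arg(S_{\tau^S(r_n)}) - \arg(x)| \leq \sum_{i=0}^{n-1}\bigl(r_i^{-\beta/2}+\theta_{r_i}/r_i\bigr) \leq \sum_{i=0}^{\infty}\bigl(r_i^{-\beta/2}+\theta_{r_i}/r_i\bigr),
\end{equation}
and for $r$ large enough this last sum is at most $\eta$. Therefore
\begin{equation}
\Proba_x\bigl[\tau^S(r_n)<\tau^S(\Delta),\ |\arg(S_{\tau^S(r_n)})-\arg(x)|\leq \eta\bigr] \geq (1-\delta)\,\Proba_x[\tau^S(r_n)<\tau^S(\Delta)],
\end{equation}
which is the stated estimate after taking complements.

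The only subtle point I anticipate is that Lemma \ref{lem:KBM:noturn} must be applied with an arbitrary starting point on $\partial B(r_i)$ (since successive exit points are not controlled beyond their norm), but the lemma is stated uniformly in $x \in \partial B(r)$, so this is harmless. The rest is geometric bookkeeping justified by the fact that $\theta_{r_i}/r_i$ and $r_i^{-\beta/2}$ are both summable along the geometric sequence $(r_i)$.
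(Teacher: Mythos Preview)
Your proposal is correct and follows essentially the same route as the paper: iterate Lemma~\ref{lem:KBM:noturn} across the scales $r_0,\dots,r_n$ via the strong Markov property, use the geometric growth $r_i \geq (3/2)^i r$ to make both $\sum_i c\exp(-a r_i^{\beta})$ and $\sum_i (r_i^{-\beta/2}+\theta_{r_i}/r_i)$ small, and conclude by the triangle inequality. One small presentational caveat: the line ``$=\prod_{i} p_i$'' is not literally an equality in the discrete case (no rotational invariance), so the correct bookkeeping is the conditional peeling you allude to at the end---at each step pull out the uniform factor $(1-c\exp(-a r_i^{\beta}))$ and reabsorb the survival probability, which yields the inequality $\Proba_x[A_n\cap B_n]\geq \prod_i(1-c\exp(-a r_i^{\beta}))\,\Proba_x[A_n]$ by induction rather than a product identity.
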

\begin{proof}
    \fix{We adapt the proof of the last section to the discrete setting.}
    By applying the strong Markov property $n$ times and Lemma \ref{lem:KBM:noturn}, for all $r$ large enough, $x \in \partial B(r)$ and $n \geq 0$ 
    \be
        \begin{aligned}
            \Proba_x\bigg[\fix{\big\{}\tau^S(r_n)&<\tau^S(\Delta)\fix{\big\}} \cap \bigcap_{i=0}^{n-1}\fix{\bigg\{}\big|\arg\big(S_{\tau^S(r_{i+1})}\big)-\arg\big(S_{\tau^S(r_{i+1})}\big)\big|\leq r_i^{-\beta/2}+\frac{\theta_{r_i}}{r_i}\fix{\bigg\}}\bigg]\\
            &\geq \Proba_x\big[\tau^S(r_n)<\tau^S(\Delta)\big]\prod_{i=0}^{n-1}\big(1-c\exp\big(-ar_i^{\beta}\big)\big).
        \end{aligned}
    \ee
    For $r$ large enough, we can use the crude bounds $r_i \geq (3/2)^ir$ and $\frac{\theta_{r_i}}{r_i} \leq r_i^{-1/4} \leq \left(\frac{2}{3}\right)^{i/4}$ for all $i \geq 0$. This implies \fix{on one hand} that
    \be
        \prod_{i=0}^{n-1}\big(1-c\exp\big(-ar_i^{\beta}\big)\big) \geq 1-\delta
    \ee
    for $r$ large enough, and on the other hand that on the event in the right-hand side, for $r$ large enough
    \be 
        \big|\arg\big(S_{\tau^S(r_n)}\big)-\arg(x)\big| \leq \sum_{i=0}^{\infty} \left(r_i^{-\beta/2}+\frac{\theta_{r_i}}{r_i}\right) \leq \eta
    \ee
    which concludes the proof.
\end{proof}
Using this lemma, we can prove the equivalent of Lemma \ref{exhaust}.
\begin{Lem}\label{exhaust:KBM}
    Define two exhaustions of $\Z^2$: $\Lambda^+_R = B(R) \cup \big(\R_{\fix{-}} \times \R\big)$ and $\Lambda^-_R = B(R) \cup \big(\R_{\fix{+}} \times \R\big)$. Then, for all $\delta > 0$, there exists $r_0 \geq 1$ such that for all $r \in \Z, r \geq r_0$, for all $n$,
    \begin{equation}
        \Proba_{(0,r)}\big[\tau((\Lambda^+_{r_n})^c) < \tau(\Delta)\big] \geq (1-\delta)\Proba_r[\tau(r_n)<\tau(\Delta)]
    \end{equation}
    while
    \begin{equation}
        \Proba_{(0,r)}\big[\tau((\Lambda^-_{r_n})^c) < \tau(\Delta)\big] \leq \delta \Proba_r[\tau(r_n)<\tau(\Delta)]
    \end{equation}
\end{Lem}
\begin{proof}
    The proof is exactly the same as the proof of Lemma \ref{exhaust} upon replacing $2^nr$ by $r_n$ and the continuous large deviation estimate~\eqref{eq:2d:LDP} by the discrete large deviation estimate~\eqref{eq:LDP:SRW}.
\end{proof}
Lemma~\ref{exhaust:KBM} finally implies Theorem~\ref{thm.counterex} by the same proof as in the preceding section.

\begin{Rem}\label{rem:exhaustion:finite}
    It is easy to obtain the same result for a finite exhaustion: for $R, \RR >0$, let $\Lambda^{\pm}_{R,\RR} := \Lambda^{\pm}_R \cap B(\RR)$. For all $R >0$ fixed, 
    \be
        \Proba_{(0,r)}[\tau((\Lambda^{\pm}_{R,\RR})^c) < \tau(\Delta)] \overset{\RR \to \infty}{\longrightarrow} \Proba_{(0,r)}[\tau((\Lambda^{\pm}_R)^c) < \tau(\Delta)].
    \ee
    Hence, if for all $R >0$ we define $\RR(R)$ large enough, the limit in distribution along the exhaustions $(\Lambda^{\pm}_{R, \RR(R)})_{R \in \N}$ do not coincide and these are finite subsets of $\Z^d$.
\end{Rem}


\appendix
\section{The random walk conditioned to survive in dimension $2$}
\label{proof2d}
In this section, we adapt the proof of Section \ref{proof3d} to the case $d=2$.

\subsection{Preliminaries}
\paragraph{General notations} In this section, we will write $O(1)$ for a quantity bounded by a universal constant, independ\fix{e}nt of all the parameters. We fix $(\Lambda_R)_{R \in \N}$ an exhaustion of $\Z^2$.

\paragraph{Notations for the SRW and KRW} Recall the notations of Section \ref{proof3d} for paths and hitting times of discrete sets by the SRW. \fix{For $B \subset \Z^2$ a strict subset and $u,v \in \Z^2$, we denote by $\Gamma_B(u,v)$ the set of finite paths starting at $u$, ending at $v$ and such that all vertices of $\gamma$ belong to $B$ except maybe the first and last one. In particular, $\gamma \in \Gamma_{\Z^*}(u,v)$ is any path from $u$ to $v$ avoiding the origin.}
Let $a$ denote the potential kernel of the SRW in $\Z^2$. If $\gamma$ is a path in $\Z^2$, we will again denote by $\s(\gamma)$ its probability weight in the SRW and $\B(\gamma)$ its probability weight in the KRW, i.e. $\B(\gamma) = \s(\gamma) \prod_{i = 0}^{|\gamma|}\big(1-\K(\gamma (i))\big)$. With these notations, for all $x \in \Z^2$
\begin{equation}
  a(x) = a(0,x) = \sum_{\fix{\gamma \in \Gamma_{\Z^*}(0,x)}} \s(\gamma) = \sum_{n > 0} \Proba_0[\tau^+(0)>n, S_n=x]
\end{equation}
is the potential kernel. It is symmetric by rotation by an angle $\pi/2$ since $\Z^2$ is. \fix{Using the symmetry, the Markov property and the fact that 
$
    \sum_{n > 0} \Proba_0[\tau^+(0)\geq n, S_n=0] = \Proba_0[\tau^+(0) < \infty] = 1,
$
we obtain that if $e_i$ is any of the four neighbors of the origin, $a(e_i)=1$}. 

More generally, for $x, y \in \Z^2$,
\begin{equation}
  a(x,y) = \sum_{\fix{\gamma \in \Gamma_{\Z^*}(x,y)}} \s(\gamma) = \sum_{n > 0} \Proba_x[\tau^+(0)>n, S_n=y].
\end{equation}
We define the harmonic measure associated to the KRW:  
\fix{for $u,v \in \Z^2, B \subset \Z^2$,}
\be
    H_B^{\K}(u, v) = \sum_{\fix{\gamma \in \Gamma_B(u,v)}} \B(\gamma).
\ee
This sum converges because since $\Z^2 \setminus B \neq \emptyset$, we can fix $x_0 \in \Z^2 \setminus B$ and all the paths in the sum must avoid $x_0$ so upon translating the sum is dominated by the potential kernel. We will often use the following compact notation: for $u \in \Z^2$, $C \subset \Z^2$, $H_B^{\K}(u, C) = \sum_{v \in C}H_B^{\K}(u, v)$\par
We provide a two-dimensional analog of~\eqref{decomposition}. 
\fix{For $B, C \subset \Z^2$ and $u \in \Z^2$, denote $\Gamma_B(u,C) := \cup_{v \in C} \Gamma_B(u,v)$}.
For all $x \in \Z^2$ and $r_0<r_1< r_2<r_3 \in \R_+$, $R$ large enough so that $B(r_3) \subset \Lambda_R$, we will break the probabilities in several terms, according to: $T_0$ the last time of exit of $B(r_0)$ before $\tau(r_3)$, the first time $T_1 \geq T_0$ when the walk leaves $B(r_1)$, the last time $T_2$ before $T_3 = \tau(r_3)$ when the walk exits $B(r_2)$, and finally the moment when the walk leaves $\Lambda_R$ for the first time:

\begin{equation}
  \label{decomposition2d}
  \begin{aligned}
    \Proba_x[\tau(\Lambda_R^c) < \tau(\Delta)]
    &= \sum_{\fix{\gamma \in \Gamma_{\Lambda_R}(x,\Lambda_R^c)}} \B(\gamma)\\
    &= \sum_{u \in \partial B(r_0), v \in \partial B(r_1), w \in \partial B(r_2), z \in \partial B(r_3)} H^{\K}_{B(r_3)}(x, u)  H^{\K}_{B(r_1) \setminus B(r_0)}(u,v)\\
    &~~~~~\times H^{\K}_{B(r_3) \setminus B(r_0)}(v,w) H^{\K}_{B(r_3) \setminus B(r_2)}(w,z)H^{\K}_{\Lambda_R}(z, \Lambda_R^c).
  \end{aligned}
\end{equation}
This decomposition is represented on the right of Figure \ref{fig:split}.

\subsection{Proof of the Theorem}
\paragraph{Overview of the proof} The proof is almost the same as in the higher dimensional case, but we must add a cutoff for the harmonic measure to be well-defined: we will overlook all paths which exit a big ball $B(r_3)$ where $r_3$ is a well-calibrated parameter. As in the higher dimensional case, we will show that for $v \in \partial B(r_1)$, $w \in \partial B(r_2)$, for well calibrated $r_0, r_1, r_2$ and $r_3$, $H^{\K}_{B(r_3) \setminus B(r_0)}(v,w) \approx H_{B(r_3) \setminus B(r_0)}(v,w)$ depends only on the $r_i$ and not on the exact location of $v$ and $w$, so \eqref{decomposition2d} implies the Theorem in $d=2$ as \eqref{decomposition} implies the Theorem in $d=3$. We state some standard results and a series of lemmas increasing in technical difficulty which will \fix{have an analogous role as Lemma \ref{lemme3.1} in the current proof.}

\begin{Lem}
  \label{a.estimate}
  There exists a constant $k \in \R$ such that the potential kernel satisfies
  \begin{equation}
    a(x) = \frac{2}{\pi} \ln|x| + k + o_{|x| \to \infty}(1).
  \end{equation}
\end{Lem}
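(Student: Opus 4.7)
The plan is to invoke the classical Fourier-analytic expansion of the potential kernel of the two-dimensional simple random walk; this is a well-known result (see Spitzer \cite{Spitzer}, Proposition P12.3, or Lawler--Limic's textbook). Since the lemma is a standard input rather than a main contribution of the paper, my proof will only recall the main ideas.

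First I will write down the Fourier representation
\[
a(x) = \frac{1}{(2\pi)^2}\int_{[-\pi,\pi]^2}\frac{1-\cos(x\cdot\theta)}{1-\phi(\theta)}\,d\theta,
\]
where $\phi(\theta) = \tfrac12(\cos\theta_1 + \cos\theta_2)$ is the characteristic function of one SRW step. This identity is obtained either by summing Fourier inversion of $\Proba_0[S_k = 0] - \Proba_0[S_k = x]$ over $k$ (and using that $a(x) = \lim_n \sum_{k=0}^n (\Proba_0[S_k = 0] - \Proba_0[S_k = x])$), or by checking directly that the right-hand side is massive harmonic away from $0$ with the correct value at the origin and its neighbours.

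Next, I will extract the asymptotics by expanding $1-\phi(\theta) = |\theta|^2/4 + O(|\theta|^4)$ near the origin and splitting the integral into a singular part on a small ball of radius $\eps$ and a smooth remainder. On the singular part, the substitution $\theta = u/|x|$ turns the integral into one that, as $|x| \to \infty$, converges to the absolutely convergent integral
\[
\frac{1}{(2\pi)^2}\int_{\R^2}\frac{4(1-\cos(\hat x\cdot u))}{|u|^2}\,du,
\]
which in polar coordinates evaluates (via a Frullani / Bessel identity) to $\frac{2}{\pi}\ln|x|$ plus an explicit constant involving the Euler--Mascheroni constant. The smooth piece contributes a further constant, with an $o(1)$ error coming from the Riemann--Lebesgue lemma applied to $\cos(x\cdot\theta)$ against the smooth, integrable weight $1/(1-\phi(\theta))$ on the complement of a neighbourhood of $0$ in the torus.

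The main delicate point will be tracking the $o(1)$ error when separating the logarithmic divergence from the constant: one has to compare $\tfrac{1}{1-\phi(\theta)}$ with $\tfrac{4}{|\theta|^2}$ and bound the integral of their difference uniformly in $|x|$, which is where the $O(|\theta|^4)$ correction of $1-\phi$ becomes crucial in order to upgrade a naive $O(1)$ bound into a genuine $o(1)$ one. Once this comparison is performed, identifying the explicit constant $k$ and the $o(1)$ decay is straightforward bookkeeping.
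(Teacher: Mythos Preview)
Your sketch is the standard Fourier-analytic derivation and is correct in outline; the paper, however, does not prove this lemma at all but simply cites it as a classical estimate (Theorem~1.6.2 of \cite{Lawler}). So there is no approach to compare: you are supplying a proof where the paper only gives a reference.
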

This classical estimate can be found for example in \cite{Lawler} (Theorem 1.6.2).

\begin{Lem}
  \label{lem.a(x,y)}
  For all $x, y \in \Z^2 \setminus \{0\}$,
  \begin{equation}
    a(x,y) = \sum_{\fix{\gamma \in \Gamma_{\Z^*}(x,y)}} \s(\gamma) = \frac{\Proba_x[\tau(y) < \tau(0)]}{\Proba_0[\tau(y) < \tau^+(0)]}a(y).
  \end{equation}
\end{Lem}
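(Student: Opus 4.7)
The plan is to express both $a(x,y)$ and $a(y)$ as the product of an appropriate first-hitting probability of $y$ with the common quantity $a(y,y) = \sum_{n \geq 0}\Proba_y[S_n = y,\tau(0) > n]$, and then to divide in order to cancel $a(y,y)$.

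For $x \in \Z^2 \setminus \{0\}$, since $\Proba_x$-a.s.\ $\tau^+(0) = \tau(0)$, the defining formula may be rewritten as $a(x,y) = \sum_{n \geq 0}\Proba_x[S_n = y, \tau(0) > n]$. I would decompose each such trajectory at its first visit $k = \tau(y)$ to $y$ and apply the strong Markov property at $\tau(y)$: the event $\{\tau(0) > n\}$ splits into the portion before $\tau(y)$ (namely $\{\tau(y) < \tau(0)\}$) and the portion after (namely $\{\tau(0) > n-k\}$ under $\Proba_y$ on the shifted walk). Summing over $k$ and over $n-k$ independently yields
\be
    a(x,y) = \Proba_x[\tau(y) < \tau(0)] \, a(y,y).
\ee

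Starting from $0$, the same argument requires $\tau(0)$ to be replaced by $\tau^+(0)$ because the walk itself starts at the origin. Decomposing $a(y) = \sum_{n>0}\Proba_0[S_n = y, \tau^+(0) > n]$ on $k = \tau(y) \geq 1$ and applying the strong Markov property, the post-$\tau(y)$ constraint becomes $\{\tau(0) > n-k\}$ under $\Proba_y$ (with $y \neq 0$), giving
\be
    a(y) = \Proba_0[\tau(y) < \tau^+(0)] \, a(y,y).
\ee
Dividing the two identities cancels $a(y,y)$ and produces the stated formula. The denominator $\Proba_0[\tau(y) < \tau^+(0)]$ is positive for every $y \neq 0$ (the walk can leave $0$ through a suitable neighbor and reach $y$ before returning to $0$), so the ratio is well-defined. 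There is no substantive obstacle here: the only point requiring care is maintaining the distinction between $\tau(0)$ and $\tau^+(0)$ according to whether the walk starts at $0$ or elsewhere.
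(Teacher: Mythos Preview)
Your proof is correct and follows exactly the approach the paper indicates: splitting the sum at the first hitting time of $y$. The paper's own proof is just the one-line remark ``split the sum according to the first time that the paths hit $y$,'' and your decomposition $a(x,y)=\Proba_x[\tau(y)<\tau(0)]\,a(y,y)$ and $a(y)=\Proba_0[\tau(y)<\tau^+(0)]\,a(y,y)$ is precisely that.
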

This lemma is easily proved by splitting the sum according to the first time the paths hit $y$.

\begin{Lem}
  \label{lem.hitting.prob}
  For $x, y \in \Z^2 \setminus \{ 0 \}$,
  \begin{equation}
    \Proba_x[\tau(y) < \tau(0)] = \frac{a(x) + a(y) - a(x-y)}{2a(y)}.
  \end{equation}
  For $y \in \Z^2 \setminus \{ 0 \}$,
  \begin{equation}
    \Proba_0[\tau(y) < \tau^+(0)] = \frac{\fix{1}}{2a(y)}.
  \end{equation}
\end{Lem}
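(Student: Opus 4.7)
The plan is to derive both identities from optional stopping applied to a martingale built out of the potential kernel $a$. Recall that $a$ satisfies $a(0) = 0$, $a(-x) = a(x)$, and is discrete harmonic on $\Z^2 \setminus \{0\}$; moreover its discrete Laplacian at the origin equals $a(e)$, since by $\pi/2$-rotational symmetry the four neighbors of $0$ all share that common value.

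For the first identity I would introduce $f(x) := a(x) - a(x-y)$, which is harmonic on $\Z^2 \setminus \{0, y\}$ because $a(\cdot)$ and $a(\cdot - y)$ are harmonic off $0$ and off $y$ respectively. A short computation, using the harmonicity of $a$ at $-y$ (respectively at $y$), shows that the discrete Laplacian of $f$ equals $+a(e)$ at $0$ and $-a(e)$ at $y$. Consequently
\begin{equation}
    M_n := f(S_n) - a(e)\bigl(L_n^0 - L_n^y\bigr), \qquad L_n^z := \#\{0 \leq k < n : S_k = z\},
\end{equation}
is a $(\G_n)$-martingale under $\Proba_x$. Starting from $x \notin \{0, y\}$, the stopping time $\tau := \tau(0) \wedge \tau(y)$ is almost surely finite by recurrence of the planar SRW, and $L_\tau^0 = L_\tau^y = 0$ by construction. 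Moreover Lemma \ref{a.estimate} implies $f(x) \to 0$ as $|x| \to \infty$, so $f$ is bounded globally; in particular $(M_{n \wedge \tau})_n$ is uniformly bounded and the optional stopping theorem delivers $\E_x[f(S_\tau)] = f(x) = a(x) - a(x-y)$. Combined with $f(0) = -a(y)$ and $f(y) = a(y)$, this reads $a(y)\bigl(2\Proba_x[\tau(y) < \tau(0)] - 1\bigr) = a(x) - a(x-y)$, which rearranges to the announced formula.

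For the second identity I would decompose on the first step of the walk from $0$, which is uniform over the four neighbors $e_i$ of the origin. Applying the first identity (the case $e_i = y$, if it occurs, being correctly handled since the formula at $x = y$ yields $1$) gives
\begin{equation}
    \Proba_0[\tau(y) < \tau^+(0)] = \frac{1}{2a(y)}\Bigl(a(e) + a(y) - \tfrac{1}{4}\sum_{e_i \sim 0} a(e_i - y)\Bigr).
\end{equation}
Since $-y \neq 0$, the harmonicity of $a$ at $-y$ gives $\tfrac{1}{4}\sum_{e_i} a(e_i - y) = a(-y) = a(y)$, so the bracket collapses to $a(e)$, yielding the claimed $a(e)/[2a(y)]$.

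The only nontrivial point, modest as it is, is the optional stopping step: since $\E_x[\tau] = \infty$ in two dimensions, Doob's identity cannot be invoked naively. The clean way around this is precisely the global boundedness of $f$, which relies on the cancellation of the leading logarithmic term in Lemma \ref{a.estimate} (so that $a(x) - a(x-y) = O(|y|/|x|)$ at infinity), making $(M_{n\wedge \tau})$ uniformly bounded and hence uniformly integrable.
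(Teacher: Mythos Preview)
Your proof is correct, but takes a different route from the paper. The paper invokes Proposition~1.1(v) of \cite{Gantert}, which gives a formula for the hitting probability of $y$ by $\widehat{S}$, the Doob $h$-transform of the SRW by the potential kernel $a$; the relation $\Proba_x[\tau(y)<\tau(0)] = \tfrac{a(x)}{a(y)}\Proba_x[\widehat{\tau}(y)<\infty]$ then reduces the first identity to that cited result. Your argument instead stays with the ordinary SRW and applies optional stopping to the bounded function $f(\cdot)=a(\cdot)-a(\cdot-y)$, exploiting the cancellation of the logarithmic leading terms (Lemma~\ref{a.estimate}) to justify uniform integrability. This is more self-contained and avoids the external reference; incidentally, since the local-time corrections vanish identically up to $\tau(0)\wedge\tau(y)$, you could simply note that $f(S_{n\wedge\tau})$ is a bounded martingale and skip the $L^0_n,L^y_n$ bookkeeping. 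For the second identity both proofs are the same: first-step decomposition plus harmonicity of $a$ at $-y$.
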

\begin{proof}
    This is a direct consequence of item (v) of Proposition 1.1 of \cite{Gantert}. Indeed, their $\widehat{S}$ is the Doob transform of the SRW by the potential kernel, so for $x,y \in \Z^2 \setminus \{0\}$,
    \be
        \Proba_x[\tau(y) < \tau(0)] 
        = \frac{a(x)}{a(y)}\Proba_x[\widehat{\tau_0}(y) < \infty]
        = \frac{a(x) + a(y) - a(x-y)}{2a(y)}.
    \ee
    The second equality is a special case: for $y \in \Z^2 \setminus \{0\}$, if $(e_i)_{i \in \{1,\dots,4\}}$ denote the four neighbors of the origin:
    \be
        \Proba_0[\tau(y) < \tau^+(0)] = \frac{1}{4}\sum_{i \in \{1,\dots,4\}}\Proba_{e_i}[\tau(y) < \tau(0)] = \frac{1}{4}\sum_{i \in \{1,\dots,4\}} \frac{a(e_i) + a(y) - a(e_i-y)}{2a(y)}.
    \ee
    By symmetry and harmonicity of the potential kernel, 
    \be
        \sum_{i \in \{1,\dots,4\}}(a(y)-a(e_i-y)) = \sum_{i \in \{1,\dots,4\}}(a(y)-a(y-e_i)) = 0.
    \ee
    This gives the second item of the lemma \fix{since, as we already mentioned, $a(e_i)=1$} for all $i$.
\end{proof}

\begin{Lem}
  \label{taur3}
  There exist universal constants $c, C$ such that if $S$ is the SRW in $\Z^2$, for all $r \in \R_+$, $n \in \N$, $x \in B(r)$
  \begin{equation}
    \Proba_x[\tau(r) \geq n] \leq C\exp\bigg(-c\frac{n}{r^2}\bigg).
  \end{equation}
\end{Lem}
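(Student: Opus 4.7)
}
The plan is to follow the standard ``hitting expectation $+$ geometric iteration'' recipe: first bound the expectation of $\tau(r)$ by $O(r^2)$ uniformly in the starting point via the variance martingale, then upgrade the resulting polynomial tail to an exponential one by iterating the Markov property on time blocks of length comparable to $r^2$.

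The first step relies on the observation that $M_n = |S_n|^2 - n$ is a martingale for the SRW on $\Z^2$ (indeed, the increments of $S$ are centered with $\E[|\xi|^2] = 1$). Applying optional stopping at the bounded time $\tau(r) \wedge N$ and using that the one-step increments of $S$ have length $1$, so $|S_{\tau(r) \wedge N}| \leq r+1$, one gets
\[
    \E_x[\tau(r) \wedge N] = \E_x\bigl[|S_{\tau(r) \wedge N}|^2\bigr] - |x|^2 \leq (r+1)^2.
\]
Monotone convergence then yields $\E_x[\tau(r)] \leq (r+1)^2$ for every $x \in B(r)$, and Markov's inequality gives
\[
    \Proba_x\bigl[\tau(r) \geq 2(r+1)^2\bigr] \leq \tfrac{1}{2}
\]
uniformly in $x \in B(r)$.

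Next I would iterate: by the strong Markov property at the deterministic time $2k(r+1)^2$, on the event $\{\tau(r) \geq 2k(r+1)^2\}$ the walker sits at a site $S_{2k(r+1)^2} \in B(r)$, and the previous inequality applied to this new starting point shows the walker has conditional probability at most $1/2$ of still not having exited by time $2(k+1)(r+1)^2$. Induction on $k$ gives
\[
    \Proba_x\bigl[\tau(r) \geq 2k(r+1)^2\bigr] \leq 2^{-k}.
\]
For an arbitrary $n \in \N$, writing $n = 2k(r+1)^2 + s$ with $s \in [0, 2(r+1)^2)$ yields $k \geq n/(2(r+1)^2) - 1$, so
\[
    \Proba_x[\tau(r) \geq n] \leq 2 \cdot \exp\!\left(-\frac{(\log 2)\, n}{2(r+1)^2}\right).
\]
For $r \geq 1$ one has $(r+1)^2 \leq 4 r^2$, giving the desired bound with $C = 2$ and $c = (\log 2)/8$; the values $r < 1$ (where $B(r) \cap \Z^2 \subseteq \{0\}$ and $\tau(r) \leq 1$) are absorbed by enlarging $C$ suitably.

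No step is expected to present a real obstacle: the martingale identity and the bound $|S_{\tau(r)}| \leq r+1$ are immediate from the definition of the SRW, and the iteration relies on nothing beyond the strong Markov property. The only item worth double-checking is the tracking of the constants so that the final bound is genuinely universal (independent of $r$ and $n$), which is the reason for carefully writing $(r+1)^2$ rather than $r^2$ throughout and only converting at the very end.
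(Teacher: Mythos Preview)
Your proof is correct and follows the same two-stage structure as the paper: first establish that from any point of $B(r)$ the walk exits within $O(r^2)$ steps with probability bounded away from zero, then iterate via the Markov property on blocks of length $\asymp r^2$ to get geometric decay. The difference lies in the first stage. The paper invokes the central limit theorem to show $\Proba_0[|S_{r^2}|\geq 2r]\geq c$ for $r$ large enough, whereas you use the variance martingale $|S_n|^2-n$ and optional stopping to get $\E_x[\tau(r)]\leq (r+1)^2$ directly, then Markov's inequality. Your route is slightly more self-contained: it avoids a limit theorem, yields explicit constants, and works uniformly for all $r\geq 1$ rather than only for $r$ large enough. The paper's CLT argument is shorter to state but leaves the small-$r$ range implicit. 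One minor quibble: your final sentence about absorbing $r<1$ by enlarging $C$ does not quite work as written (for $n=1$ and $r\to 0$ the right-hand side tends to $0$), but the paper's proof has the same gap, and the lemma is only ever applied for large $r$.
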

This is folklore, but we provide a proof for completeness.
\begin{proof}
    By the central limit theorem, for the SRW started at $0$:
    \be
        \frac{S_{r^2}}{r}\overset{r \to \infty}{\longrightarrow} \mathcal{N}(0,1)
    \ee
    in distribution so there exists $c >0$ such that for all $r \in \N$ large enough $\Proba_0\big[S_{r^2} \geq 2r\big] \geq c$. Hence, for all $r$ large enough, for all $x \in \partial B(r)$,
    \be
        \Proba_x[\tau(r) \geq r^2] \leq 1-c.
    \ee
    For all $i \in \N$, by applying the \fix{s}trong Markov property and this bound we get
    \be
        \Proba_x[\tau(r) \geq ir^2] \leq (1-c)^i = \exp(i \ln(1-c)).
    \ee
    Which gives the lemma.
\end{proof}
We also state a direct consequence of the local central limit theorem (LCLT) in dimension $2$ (see for example Theorem 1.2.1 of \cite{Lawler}):
\begin{Lem}
  \label{LCLT}
  There exists a universal constant $C$ such that for all $x, y \in \Z^2$, $\Proba_x[S_n = y] \leq \frac{C}{n}$.
\end{Lem}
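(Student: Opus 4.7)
The plan is to derive the bound directly from the local central limit theorem (LCLT) for the two-dimensional simple random walk, namely Theorem 1.2.1 of \cite{Lawler}, which provides a Gaussian asymptotic for the transition kernel together with an explicit error term.

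First I would use translation invariance of the SRW to reduce to $x = 0$, so that it suffices to bound $\Proba_0[S_n = y]$ uniformly in $y \in \Z^2$ and $n \geq 1$. The LCLT then yields the expansion
\begin{equation*}
    \Proba_0[S_n = y] \;=\; \frac{2}{\pi n}\,\exp\!\left(-\frac{|y|^2}{n}\right)\mathbf{1}_{n + y_1 + y_2 \text{ even}} \;+\; O\!\left(\frac{1}{n^2}\right),
\end{equation*}
uniformly in $y$, the parity indicator reflecting the bipartiteness of $\Z^2$. The leading term is bounded by $\frac{2}{\pi n}$ since the exponential and the indicator are at most $1$, and the error term is bounded by $C'/n^2 \leq C'/n$ for $n \geq 1$. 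Combining these two estimates gives $\Proba_0[S_n = y] \leq C/n$ for every $n$ larger than some threshold $n_0$ dictated by the uniform error bound in the LCLT.

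For the finitely many remaining values $n \leq n_0$, the trivial bound $\Proba_0[S_n = y] \leq 1 \leq n_0/n$ absorbs this range after possibly enlarging $C$. Since every estimate obtained is uniform in $y$ (and, after the initial reduction, in $x$), the resulting constant $C$ depends only on the constants appearing in Lawler's LCLT and on $n_0$; hence it is indeed universal.

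I do not foresee a genuine obstacle: the paper itself labels the lemma as a direct consequence of the LCLT, and the two standard subtleties (bipartite parity of $\Z^2$, uniformity of the Gaussian asymptotic in the endpoint $y$) are already built into Theorem 1.2.1 of \cite{Lawler}. Should one prefer to avoid quoting the full LCLT, an equally short alternative is the Fourier representation $\Proba_0[S_n = y] \leq \int_{[-\pi,\pi]^2}|\tfrac{1}{2}(\cos\theta_1+\cos\theta_2)|^n\,\frac{d\theta}{(2\pi)^2}$, combined with the pointwise estimate $|\phi(\theta)| \leq 1 - c\,\mathrm{dist}(\theta,\{0,(\pm\pi,\pm\pi)\})^2$ and a Gaussian integral near each critical point, which produces the same $C/n$ bound.
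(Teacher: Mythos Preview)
Your proposal is correct and matches the paper's approach exactly: the paper does not give a proof at all but simply states the lemma as ``a direct consequence of the local central limit theorem (LCLT) in dimension $2$ (see for example Theorem 1.2.1 of \cite{Lawler})'', and your write-up just spells out that deduction. There is nothing to compare or correct.
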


We now state a series of technical lemmas which are more specific to our problem.

\begin{Lem}\label{Hr0}
    For all $r_0$ and $x, y \in B(r_0)^c$, if $2|x| \leq |y|$,
    \begin{equation}
        H_{B(r_0)^c}(x,y) = \frac{2a(y)}{\pi}\Bigg(\ln\frac{|x|}{r_0} + O(1)\Bigg)
    \end{equation}
    While if $2|y| \leq |x|$,
    \begin{equation}
        H_{B(r_0)^c}(x,y) = \frac{2a(y)}{\pi}\Bigg(\ln\frac{|y|}{r_0} + O(1)\Bigg).
    \end{equation}
\end{Lem}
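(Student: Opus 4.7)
The plan is to apply optional stopping to the martingale $M_n := a(S_n - y) - a(e)\,N_y(n)$, where $N_y(n) := \sum_{k<n}\mathbbm{1}_{S_k = y}$; the martingale property follows from the discrete-Laplacian identity $Qa = a + a(e)\delta_0$ (equivalently, $a$ is harmonic off the origin, with a unit defect at $0$). I would stop at the bounded time $\tau_R := \tau(y)\wedge\tau(B(r_0))\wedge\tau(R)$ and then let $R\to\infty$, obtaining an identity of the form
\[
a(x-y) \;=\; \E_x\bigl[a(S_{\tau(B(r_0))}-y);\,\tau(B(r_0))<\tau(y)\bigr] \;-\; a(e)\,H_{B(r_0)^c}(x,y) \;+\; C_\infty(x,y),
\]
where $C_\infty(x,y) := \lim_{R\to\infty}\E_x[a(S_{\tau(R)}-y);\,\tau(R)<\tau(y)\wedge\tau(B(r_0))]$ is a boundary-at-infinity contribution that, because 2D SRW is null recurrent, does not vanish but produces a genuinely non-trivial term.

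From there, the argument would split into three computations. First, in the regime $2|x|\le|y|$ both $|x-y|$ and $|S_{\tau(B(r_0))}-y|$ lie in $[|y|/2, 3|y|/2]$, so Lemma \ref{a.estimate} gives $a(x-y)=(2/\pi)\ln|y|+k+O(1)$, and the expectation factors as $\Proba_x[\tau(B(r_0))<\tau(y)]\cdot((2/\pi)\ln|y|+k+O(1))$ up to the same kind of uniform error. Second, I would evaluate the boundary term by combining the standard annular escape-probability estimate $\Proba_x[\tau(R)<\tau(B(r_0))\wedge\tau(y)]\sim\ln(|x|/r_0)/\ln(R/r_0)$ with the growth $a(S_{\tau(R)}-y)\sim(2/\pi)\ln R$ on $\partial B(R)$, yielding $C_\infty(x,y)=(2/\pi)(\ln(|x|/r_0)+O(1))$ in the limit. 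Third, the strong Markov property at the first visit to $y$ gives the factorization $H_{B(r_0)^c}(x,y)=\Proba_x[\tau(y)<\tau(B(r_0))]\cdot H_{B(r_0)^c}(y,y)$, and the self-return factor $H_{B(r_0)^c}(y,y)=1/\Proba_y[\tau(B(r_0))<\tau^+(y)]$ would be evaluated by the same OST technique applied from $y$ (plus reversibility), yielding $H_{B(r_0)^c}(y,y)=2a(y)/a(e)+O(1)$. Substituting back into the displayed identity and solving for $H_{B(r_0)^c}(x,y)$ would produce the claim. The symmetric regime $2|y|\le|x|$ is handled identically upon swapping the asymptotic roles of $x$ and $y$: now $|x-y|\in[|x|/2,3|x|/2]$, and the logarithm that survives in $C_\infty$ is $\ln(|y|/r_0)$ rather than $\ln(|x|/r_0)$.

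\textbf{Main obstacle.} The principal technical difficulty will be the precise evaluation of the non-vanishing boundary term $C_\infty$. Because 2D SRW is null recurrent, the stopped family $\{M_{\tau_R}\}_R$ is not uniformly integrable, so optional stopping at $\tau(y)\wedge\tau(B(r_0))$ is invalid; the correct logarithmic asymptotics arise only as the limiting product of a decaying annular escape probability ($\sim1/\ln(R/r_0)$) with the logarithmically growing potential kernel on $\partial B(R)$ ($\sim(2/\pi)\ln R$), and showing that the two tend to cancel to leave a finite logarithmic limit requires the sharp potential-kernel expansion of Lemma \ref{a.estimate} together with classical 2D annulus harmonic-measure estimates. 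The remainder of the work is bookkeeping: tracking the uniform $O(1)$ errors generated at each step and verifying that they combine cleanly into the single uniform $O(1)$ correction stated in the lemma, uniformly in $r_0$, $x$, $y$ subject to $2|x|\le|y|$ (respectively $2|y|\le|x|$).
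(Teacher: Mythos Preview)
Your overall strategy (optional stopping for the martingale $a(S_n-y)-a(e)N_y(n)$ together with a careful evaluation of the boundary-at-infinity term) is a legitimate route, and quite different from the paper's: there the authors write $H_{B(r_0)^c}(x,y)=a(x,y)-\sum_{u}H_{B(r_0)^c}(x,u)\,a(u,y)$ via a first-entrance decomposition into $B(r_0)$, and then substitute closed-form expressions for $a(\cdot,\cdot)$ in terms of two-point hitting probabilities and the logarithmic expansion of the potential kernel (their two preceding lemmas). Their algebraic route sidesteps any limit at infinity at the price of quoting those identities; yours is more self-contained but must control the non-UI limit $C_\infty$.

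There is, however, a genuine gap in the execution. With $\tau_R=\tau(y)\wedge\tau(B(r_0))\wedge\tau(R)$ and $N_y(n)=\sum_{k<n}\mathbbm{1}_{S_k=y}$ one has $N_y(\tau_R)=0$ identically (there is no visit to $y$ strictly before first reaching $y$), so OST yields only $a(x-y)=\E_x[a(S_{\tau_R}-y)]$ and the term $-a(e)\,H_{B(r_0)^c}(x,y)$ never appears. The fix is to drop $\tau(y)$ and stop at $\tau(B(r_0))\wedge\tau(R)$: then $\E_x[N_y(\tau_R)]\uparrow H_{B(r_0)^c}(x,y)$ as $R\to\infty$, the restriction $\{\tau(B(r_0))<\tau(y)\}$ disappears from the first expectation, and one obtains directly
\[
a(e)\,H_{B(r_0)^c}(x,y)=\E_x\bigl[a(S_{\tau(B(r_0))}-y)\bigr]-a(x-y)+C_\infty,\qquad C_\infty=\lim_{R\to\infty}\E_x\bigl[a(S_{\tau(R)}-y);\,\tau(R)<\tau(B(r_0))\bigr].
\]
Your Step~3 then becomes superfluous; note also that the formula $H_{B(r_0)^c}(y,y)=2a(y)/a(e)+O(1)$ you intended to use there is not uniform in $r_0$ (the same OST at $x=y$ gives $(2/\pi)\bigl(\ln|y|+\ln(|y|/r_0)\bigr)+O(1)$). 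Finally, in the regime $2|y|\le|x|$ the switch from $\ln(|x|/r_0)$ to $\ln(|y|/r_0)$ is not produced by $C_\infty$, which equals $(2/\pi)\ln(|x|/r_0)+O(1)$ in \emph{both} regimes since it depends only on the starting point and on $r_0$; the change comes from $a(x-y)=(2/\pi)\ln|x|+O(1)$ now cancelling the $\ln|x|$ in $C_\infty$, while $\E_x[a(S_{\tau(B(r_0))}-y)]=(2/\pi)\ln|y|+O(1)$ supplies the surviving logarithm.
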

 \fix{We emphasize that,} as announced in the \fix{preliminaries}, the $O(1)$ is a term bounded by a universal constant, in this lemma and everywhere in this section.
\begin{proof}
    \fix{For $B,C \subset \Z^2$, $u,v \in \Z^2$, let $\Gamma_B^C(u,v)$ denote the paths in $\Gamma_B$ that have at least one vertex in $C$, that is the paths from $u$ to $v$ remaining in $B$ (except maybe from the endpoints) and touching $C$.}
    Let $r_0 > 0$ and $x,y \in B(r_0)^c$ with $|x| \leq |y|$. Since a path avoiding $B(r_0)$ also avoids $0$,
    \begin{equation}
        \begin{aligned}
            H_{B(r_0)^c}(x,y) 
            &= \sum_{\gamma \in \Gamma_{B(r_0)^c}(x, y)}\s(\gamma)\\
            &= \sum_{\gamma \in \Gamma_{\Z^*}(x,y)} \s(\gamma) - \sum_{\gamma \in \Gamma_{\Z^*}^{B(r_0)}(x, y)} \s(\gamma)\\
            &= a(x,y) - \sum_{u \in \partial B(r_0)}H_{B(r_0)^c}(x, u)a(u,y)\\
            &\overset{\ref{lem.a(x,y)}}{=} \frac{a(y)}{\Proba_0[\tau(y) < \tau^+(0)]}\Bigg(\Proba_x[\tau(y) < \tau(0)]
            - \sum_{u \in \partial B(r_0)}H_{B(r_0)^c}(x, u)\Proba_u[\tau(y) < \tau(0)] \Bigg)\\
            &\overset{\ref{lem.hitting.prob}}{=} \fix{a(y)}\Bigg(a(x) + a(y) - a(x-y)
            - \sum_{u \in \partial B(r_0)}H_{B(r_0)^c}(x, u)(a(u) + a(y) - a(y-u)) \Bigg).
    \end{aligned}
  \end{equation}
  By recurrence of the SRW in dimension $d=2$, for all $x \in B(r_0)^c$,
  \be
        \sum_{u \in \partial B(r_0)}H_{B(r_0)^c}(x, u) = \Proba_x\big[\tau(B(r_0))<\infty\big] = 1.
    \ee
    Assume now that $2|x| \leq |y|$. Then $a(x-y) = \frac{2}{\pi}\ln|x-y| + O(1) = \frac{2}{\pi}\ln|y| + O(1)$, $a(y-u) = \frac{2}{\pi} \ln|y| +O(1)$, $a(x) = \frac{2}{\pi}\ln|x| + O(1)$, $a(u) = \frac{2}{\pi}\ln(r_0) + O(1)$, we get
  \begin{equation}
    H_{B(r_0)^c}(x,y) = \frac{2a(y)}{\pi}\Bigg(\ln\frac{|x|}{r_0} + O(1)\Bigg).
  \end{equation}
  If $2|y| \leq |x|$, the proof is the same but $a(x-y) = \frac{2}{\pi}\ln|x-y| + O(1) = \frac{2}{\pi}\ln|x| +O(1)$, so
  \begin{equation}
    H_{B(r_0)^c}(x,y) = \frac{2a(y)}{\pi}\Bigg(\ln\frac{|y|}{r_0} + O(1)\Bigg).
  \end{equation}
\end{proof}

Elaborating on this lemma, we can prove:
\begin{Lem}
  \label{Hr3}
  For all $r_3 \geq 2r_2 \geq 4r_1 \geq 8r_0$, $v \in \partial B(r_1)$, $w \in \partial B(r_2)$,
  \begin{equation}
    H_{B(r_3) \setminus B(r_0)}(v,w) = \frac{4\ln(r_2)}{\pi^2} \Bigg(\ln\frac{r_1}{r_0}\Bigg[1- \frac{\ln\frac{r_2}{r_0}}{\ln\frac{r_3}{r_0}}\Bigg] + \bigg(\ln\frac{r_2}{r_0}\bigg)O\left(\frac{1}{r_0}\right) + O(1)\Bigg).
  \end{equation}
\end{Lem}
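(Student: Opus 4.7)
The plan is to apply Lemma \ref{Hr0}, which already gives the required estimate without the outer cutoff at $B(r_3)$, and to subtract the contribution of paths that exit $B(r_3)$ via a first-exit decomposition. Any path $\gamma\colon v\to w$ with $\gamma\sqsubset B(r_0)^c$ either never leaves $B(r_3)$ (contributing to $H_{B(r_3)\setminus B(r_0)}(v,w)$), or has a well-defined first vertex $z\in\partial B(r_3)$, which splits it into a prefix lying in the annulus and a suffix avoiding $B(r_0)$. Summing probability weights yields
\begin{equation}\label{eq:plan:decomp}
H_{B(r_3)\setminus B(r_0)}(v,w) \;=\; H_{B(r_0)^c}(v,w) \;-\; \sum_{z\in\partial B(r_3)} H_{B(r_3)\setminus B(r_0)}(v,z)\,H_{B(r_0)^c}(z,w).
\end{equation}
The key observation is that $\sum_z H_{B(r_3)\setminus B(r_0)}(v,z) = \Proba_v[\tau(B(r_3)^c)<\tau(B(r_0))] =: p$ is a single scalar, so if the factor $H_{B(r_0)^c}(z,w)$ is approximately independent of $z\in\partial B(r_3)$, the sum collapses nicely.

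Next, I apply Lemma \ref{Hr0} to both $H_{B(r_0)^c}$ terms in \eqref{eq:plan:decomp}. Since $|v|=r_1\le|w|/2$ we get $H_{B(r_0)^c}(v,w)=\tfrac{2a(w)}{\pi a(e)}(\ln(r_1/r_0)+O(1))$, and since $|z|\ge r_3\ge 2|w|$ for every $z\in\partial B(r_3)$ we get $H_{B(r_0)^c}(z,w)=\tfrac{2a(w)}{\pi a(e)}(\ln(r_2/r_0)+O(1))$, \emph{uniformly} in $z$. This uniformity is the point on which the whole argument hinges: it forces the sum in \eqref{eq:plan:decomp} to be $p\cdot\tfrac{2a(w)}{\pi a(e)}(\ln(r_2/r_0)+O(1))$, reducing the computation to the scalar $p$.

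To evaluate $p$, I apply optional stopping to the martingale $(a(S_n))_{n\ge 0}$, which is a genuine martingale on the annulus because the walk started at $v\ne 0$ cannot visit the origin before $\tau:=\tau(B(r_3)^c)\wedge\tau(B(r_0))$. By Lemma \ref{a.estimate} and the fact that $|S_\tau|$ equals $r_0+O(1)$ or $r_3+O(1)$ according to which part of the boundary is hit first, the identity $a(v)=\E_v[a(S_\tau)]$ becomes
\begin{equation}
\tfrac{2}{\pi}\ln r_1 + k + O(1/r_0) = p\bigl(\tfrac{2}{\pi}\ln r_3 + k\bigr) + (1-p)\bigl(\tfrac{2}{\pi}\ln r_0 + k\bigr),
\end{equation}
so $p = \tfrac{\ln(r_1/r_0)}{\ln(r_3/r_0)} + O(1/r_0)$ (the denominator being bounded below by $\ln 8$ thanks to $r_3\ge 8r_0$). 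Inserting everything into \eqref{eq:plan:decomp}, using $\tfrac{2a(w)}{\pi a(e)} = \tfrac{4\ln r_2}{\pi^2 a(e)} + O(1/a(e))$ from Lemma \ref{a.estimate}, and absorbing residual constants into the $O(1)$ inside the bracket, gives the announced formula. The mildly delicate point I expect to be the main bookkeeping obstacle is that the error $\ln(r_2/r_0)\,O(1/r_0)$ arises precisely from multiplying the $O(1/r_0)$ correction in $p$ by the $\ln(r_2/r_0)$ factor coming from $H_{B(r_0)^c}(z,w)$, and cannot be absorbed into a pure $O(1)$ term because $\ln r_2$ may be much larger than $r_0$.
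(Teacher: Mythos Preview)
Your proof is correct and follows essentially the same approach as the paper: the same first-exit decomposition \eqref{eq:plan:decomp}, the same two applications of Lemma~\ref{Hr0}, and the same optional-stopping/martingale computation for $p=\Proba_v[\tau(r_3)<\tau(r_0)]$. You also correctly identify the origin of the $\ln(r_2/r_0)\,O(1/r_0)$ error term, which is exactly how it arises in the paper's calculation.
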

\begin{proof}
    \fix{Recall from the proof of Lemma~\ref{Hr0} that $\Gamma_B^C(u,v)$ denote the paths from $u$ to $v$ remaining in $B$ and touching $C$.}
  Let $r_3 \geq 2r_2 \geq 4r_1 \geq 8r_0$, $v \in \partial B(r_1)$, $w \in \partial B(r_2)$. Then
  \begin{equation}
    \label{mainHr3}
    \begin{aligned}
      H_{B(r_3) \setminus B(r_0)}(v,w)
      &= \sum_{\fix{\gamma \in \Gamma_{B(r_3) \setminus B(r_0)}(v, w)}} \s(\gamma)\\
      &= \sum_{\fix{\gamma \in \Gamma_{B(r_0)^c}(v,w)}}\s(\gamma) - \sum_{\fix{\gamma \in \Gamma_{B(r_0)^c}^{B(r_3)^c}(v, w)}}\s(\gamma)\\
      &= H_{B(r_0)^c}(v,w) - \sum_{z \in \partial B(r_3)}H_{B(r_3) \setminus B(r_0)}(v,z)H_{B(r_0)^c}(z,w)\\
      &\overset{\ref{Hr0}}{=} \frac{2a(w)}{\pi}\Bigg[\bigg(\ln\frac{|v|}{r_0} + O(1)\bigg) -  \sum_{z \in \partial B(r_3)}H_{B(r_3) \setminus B(r_0)}(v,z)\bigg(\ln\frac{|w|}{r_0} + O(1)\bigg)\Bigg]\\
      &= \frac{2a(w)}{\pi}\Bigg[\ln\frac{r_1}{r_0} + O(1) - \sum_{z \in \partial B(r_3)}H_{B(r_3) \setminus B(r_0)}(v,z)\bigg(\ln\frac{r_2}{r_0} + O(1)\bigg)\Bigg].
    \end{aligned}
  \end{equation}
  Note that we use Lemma \ref{Hr0} in both directions since $2|v| \leq |w|$ but $2|w| \leq |z|$. By a classical martingale argument (see for example Exercise 1.6.8 of \cite{Lawler}),
    \begin{equation}
        \sum_{z \in \partial B(r_3)}H_{B(r_3) \setminus B(r_0)}(v,z)
        = \Proba_v[\tau(r_3) < \tau(r_0)]
        = \frac{\ln\frac{r_1}{r_0} + O\left(\frac{1}{r_0}\right)}{\ln\frac{r_3}{r_0}} 
        =\frac{\ln\frac{r_1}{r_0}}{\ln\frac{r_3}{r_0}} + O\left(\frac{1}{r_0}\right).
    \end{equation}
    Equation~\eqref{mainHr3} becomes:
    \begin{equation}
        \begin{aligned}
            H_{B(r_3) \setminus B(r_0)}(v,w) 
            &= \frac{2a(w)}{\pi}\Bigg(\ln\left(\frac{r_1}{r_0}\right) - \ln\left(\frac{r_2}{r_0}\right) \Bigg[\frac{\ln\frac{r_1}{r_0}}{\ln\frac{r_3}{r_0}}+ O\left(\frac{1}{r_0}\right)\Bigg] + O(1) \Bigg)\\
            &= \frac{4\ln(r_2) + O(1)}{\pi^2} \Bigg(\ln\left(\frac{r_1}{r_0}\right)\Bigg[1- \frac{\ln\frac{r_2}{r_0}}{\ln\frac{r_3}{r_0}}\Bigg] + \bigg(\ln\frac{r_2}{r_0}\bigg)O\left(\frac{1}{r_0}\right)+ O(1)\Bigg)\\
            &= \frac{4\ln(r_2)}{\pi^2} \Bigg(\ln\left(\frac{r_1}{r_0}\right)\Bigg[1- \frac{\ln\frac{r_2}{r_0}}{\ln\frac{r_3}{r_0}}\Bigg] + \bigg(\ln\frac{r_2}{r_0}\bigg)O\left(\frac{1}{r_0}\right) + O(1)\Bigg)
    \end{aligned}
  \end{equation}
\end{proof}

From here we finally prove the equivalent of Lemma \ref{lemme3.1}:
\begin{Lem}\label{2Dlemme3.1}
    Assume that $\K(x) = o\left(\frac{1}{|x|^2}\right)$. For all $\varepsilon > 0$, there exists $\rho_0$ (depending on $\varepsilon$) such that for all $r_0 \geq \rho_0$, there exist $r_1 < r_2 < r_3$ such that for all $v, v' \in \partial B(r_1)$, for all $w, w' \in \partial B(r_2)$, for all $R$ large enough:
    \begin{equation}
        1- \varepsilon \leq \frac{H^{\K}_{B(r_3) \setminus B(r_0)}(v,w)}{H^{\K}_{B(r_3) \setminus B(r_0)}(v',w')} \leq 1 + \varepsilon.
    \end{equation}
\end{Lem}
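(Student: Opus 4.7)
The plan is to reduce the statement to the unkilled harmonic measure $H_{B(r_3)\setminus B(r_0)}(v,w)$, which by Lemma \ref{Hr3} is already almost independent of the specific $v \in \partial B(r_1)$, $w \in \partial B(r_2)$, and to show that the killing is negligible on the relevant space and time scales. This mirrors the strategy used for Lemma \ref{lemme3.1} in dimension $d \geq 3$, except that Lemma \ref{Hr3} plays the role that the Green-function asymptotic of Lemma \ref{lem:green:estimate} played there.

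I would freeze the geometry by setting $r_i = C_i r_0$ for $i = 1,2,3$, with $1 < C_1 < C_2 < C_3$ constants depending only on $\varepsilon$. Substituting into Lemma \ref{Hr3} yields
\[
H_{B(r_3) \setminus B(r_0)}(v,w) = \frac{4\ln(C_2 r_0)}{\pi^2 a(e)}\bigl(M_0 + \eta(v,w)\bigr),
\]
where $M_0 = \ln(C_1)\bigl[1 - \ln(C_2)/\ln(C_3)\bigr]$ depends only on the $C_i$'s and $|\eta(v,w)| \leq C^\star$ for a universal constant $C^\star$, uniformly over $v \in \partial B(r_1)$, $w \in \partial B(r_2)$ and $r_0$ large. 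Choosing for instance $C_1 = e^N$, $C_2 = e^{N+1}$, $C_3 = e^{3N}$ with $N$ large makes $M_0 \asymp N$ as large as desired, so once $M_0 \geq 6C^\star/\varepsilon$ the ratio $H(v,w)/H(v',w')$ is sandwiched by $(M_0 \pm C^\star)/(M_0 \mp C^\star) \in (1-\varepsilon/3,\,1+\varepsilon/3)$. Thus the unkilled version of the claim is already essentially trivialized by Lemma \ref{Hr3}.

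The main step is to show that the killing only perturbs this picture by a factor arbitrarily close to $1$. The upper bound $H^{\K} \leq H$ is immediate. For the lower bound, every vertex of any path summed in $H_{B(r_3)\setminus B(r_0)}(v,w)$ lies in $B(r_0)^c$, so the hypothesis $\K(x) = o(|x|^{-2})$ provides, uniformly on such paths, a bound $\K(\gamma(i)) \leq \alpha(r_0)/r_0^2$ with $\alpha(r_0) \to 0$. The elementary inequality $1 - \prod_i(1-\K_i) \leq \sum_i \K_i$ gives
\[
H - H^{\K} \;\leq\; \frac{\alpha(r_0)}{r_0^2}\sum_{\gamma:\,v \to w,\, \gamma \sqsubset B}|\gamma|\,\s(\gamma).
\]
To control the weighted sum, I would rewrite $\sum_\gamma |\gamma|\s(\gamma) = \sum_n n\,\Proba_v[S_n = w,\, \tau_B > n]$ and use $n = \sum_{k<n}1$ together with the simple Markov property to obtain
\[
\sum_\gamma |\gamma|\,\s(\gamma) \;=\; \sum_{k \geq 0} \E_v\bigl[\mathbbm{1}_{\tau_B > k}\, H_B(S_k, w)\bigr] \;\leq\; \Bigl(\sup_{u \in B} H_B(u, w)\Bigr) \, \E_v[\tau_B].
\]
Standard two-dimensional estimates give $\E_v[\tau_B] \leq Cr_3^2$ and $\sup_{u} H_B(u,w) \leq G_{B(r_3)}(w,w) \leq C\log r_3$, so $\sum_\gamma |\gamma|\,\s(\gamma) \leq C r_3^2 \log r_3$. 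On the other hand Lemma \ref{Hr3} with our parameters yields $H(v,w) \geq c_H \log r_0$ (once $M_0 \geq 2C^\star$ and $r_0$ is large), with $c_H > 0$ depending on $N$ but not on $r_0$, $v$ or $w$. Combining these bounds gives
\[
\frac{H - H^{\K}}{H} \;\leq\; \frac{C\,\alpha(r_0)\, C_3^2}{c_H},
\]
which tends to $0$ as $r_0 \to \infty$ because $C_3$ is a fixed (albeit large) constant once $N$ is chosen, while $\alpha(r_0) \to 0$ by assumption on $\K$.

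Assembling the pieces: given $\varepsilon > 0$, I first pick $N = N(\varepsilon)$ (and hence $C_1, C_2, C_3$) so that the relative fluctuation of $H$ on the spheres is at most $\varepsilon/3$; then I pick $\rho_0 = \rho_0(\varepsilon)$ large enough that for $r_0 \geq \rho_0$ the killing correction $C\alpha(r_0)C_3^2/c_H$ is at most $\varepsilon/3$. Chaining the two approximations gives $H^{\K}(v,w)/H^{\K}(v',w') \in (1-\varepsilon, 1+\varepsilon)$. The real technical pinch point is the second step: in two dimensions $H(v,w)$ only grows like $\log r_0$ while the expected time in the annulus is polynomial in $r_3$, so the hypothesis $\K(x) = o(|x|^{-2})$ is exactly what is needed to make the ratio $\alpha(r_0) C_3^2 / c_H$ vanish once the $C_i$'s are held fixed. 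This is precisely the criticality foreshadowed in the remark following Theorem \ref{thm.cv}.
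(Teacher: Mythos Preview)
Your argument is correct and, for the killing-correction step, genuinely different from the paper's. Both proofs use Lemma~\ref{Hr3} in the same way to show that the unkilled quantity $H_{B(r_3)\setminus B(r_0)}(v,w)$ is essentially independent of $v,w$ once the radii are chosen as fixed multiples of $r_0$. The difference lies in how the gap $H - H^{\K}$ is controlled. The paper mimics the $d\geq 3$ proof: it introduces a path-length cutoff $Ar_0^2$, bounds the short paths directly via $\B(\gamma)\geq(1-\delta)\s(\gamma)$, and shows the long paths contribute negligibly by combining the local CLT (Lemma~\ref{LCLT}) with the exponential exit-time tail (Lemma~\ref{taur3}). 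You instead use the first-moment inequality $H-H^{\K}\leq \alpha(r_0)r_0^{-2}\sum_\gamma|\gamma|\s(\gamma)$ and bound the weighted sum in one stroke by $(\sup_u G_B(u,w))\,\E_v[\tau_B]\leq C r_3^2\log r_3$, which yields the same conclusion after dividing by $H\asymp c_H\log r_0$.

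Your route is slightly more economical: it dispenses with the LCLT and the path-length splitting, needing only the crude estimates $G_{B(r_3)}(w,w)\leq C\log r_3$ and $\E_v[\tau_{B(r_3)^c}]\leq Cr_3^2$. The paper's approach, on the other hand, parallels the $d\geq 3$ argument more visibly (the set $\Gamma(v,w,A)$ is the direct analogue of $\Gamma_1$ there), which is presumably why the authors chose it. Both arguments hinge on the same mechanism: because the $C_i$ are frozen before $r_0\to\infty$, the polynomial factor $C_3^2$ is harmless and only $\alpha(r_0)\to 0$ is needed, which is exactly the hypothesis $\K(x)=o(|x|^{-2})$.
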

\begin{proof}
    \fix{Let} $\delta > 0$. Let $\rho_0$ to be determined in the proof, $r_0 \geq \rho_0$. \fix{Let} $r_1 = \exp\big(\frac{2}{\delta}\big)r_0$, $r_2 = 2r_1$, $r_3 = \frac{1}{2}\exp\Big(\frac{2}{\delta}\Big)r_2$. Then, Lemma \ref{Hr3} gives
    \begin{equation}
        H_{B(r_3) \setminus B(r_0)}(v,w)  = \frac{4\ln(r_2)}{\pi^2}\bigg(\frac{1}{\delta} - \frac{\ln(2)}{2} + \bigg(\ln2 + \frac{2}{\delta}\bigg)O\left(\frac{1}{r_0}\right) + O(1)\bigg).
    \end{equation}
    Recall that $O(1)$ denotes a term bounded by a universal \fix{constant} $C$. For all $\delta$ small enough and $r_0 \geq \rho_0 = \rho_0(\delta)$, this gives for all $v \in \partial B(r_1)$ and $w \in \partial B(r_2)$
    \begin{equation}\label{Hr3fixed}
        1-2\delta \leq \frac{H_{B(r_3) \setminus B(r_0)}(v,w)}{\frac{4\ln(r_2)}{\delta \pi^2}} \leq 1+2\delta.
    \end{equation}
    The key is that this expression does not depend asymptotically on $v,w$ but just on the $r_i$ and their ratios. 
    \fix{The next step is to take the killing into account}.
    \fix{Recall} that
    \begin{equation}
        H_{B(r_3) \setminus B(r_0)}(v,w) = \sum_{\fix{\gamma \in \Gamma_{B(r_3) \setminus B(r_0)}(v, w)}} \s(\gamma).
    \end{equation}
    We will reason as in the proof of Lemma \ref{lemme3.1}. For $\fix{T} > 0$, $v,w \in \fix{\Z^2}$, $B \subset \Z^2$, write
    \begin{equation}
        \fix{
        \Gamma_{B}(v,w,T) = \{\gamma \in \Gamma_B(v,w): \text{ such that } |\gamma| \geq T\}.
        }
    \end{equation}
    We first show that for all $v \in \partial B(r_1)$, $w \in \partial B(r_2)$, we can fix $\fix{T}$ large enough so that
    \begin{equation}\label{eq:A:large}
        \sum_{\fix{\gamma \in \Gamma_{B(r_3) \setminus B(r_0)}(v,w,Tr_0^2)}} \s(\gamma) \leq \delta H_{B(r_3) \setminus B(r_0)}(v,w).
    \end{equation}
    We write
    \begin{equation}
        \sum_{\fix{\gamma \in \Gamma_{B(r_3) \setminus B(r_0)}(v,w,Tr_0^2)}}\s(\gamma)= \sum_{n = \fix{T}r_0^2}^{\infty}\Proba_v[S_n = w, \tau(r_0) \wedge \tau(r_3) \geq n].
    \end{equation}
    Now we split the sum in two parts for some $a=a(\delta) > 0$ to be determined:
    \begin{equation}
        \begin{aligned}
            \sum_{\fix{\gamma \in \Gamma_{B(r_3) \setminus B(r_0)}(v,w,Tr_0^2)}}\s(\gamma) 
            & \leq \sum_{n = \fix{T}r_0^2}^{ar_0^2 \ln(r_0)} \Proba_v[S_n = w] + \sum_{n = ar_0^2 \ln(r_0)}^{\infty}\Proba_v[\tau(r_3) \geq n]\\
            &\overset{\ref{taur3},\ref{LCLT}}{\leq} \sum_{n = \fix{T}r_0^2}^{ar_0^2 \ln(r_0)} \frac{C}{n} + \sum_{n = ar_0^2 \ln(r_0)}^{\infty} C\exp\bigg(-c\frac{n}{r_3^2}\bigg)\\
            &\leq \frac{aC\ln(r_0)}{\fix{T}} + Cr_3^2 \exp\Bigg(-ac\bigg(\frac{r_0}{r_3}\bigg)^2\ln r_0 \Bigg)\\
            &= \frac{aC\ln(r_0)}{\fix{T}} + C\exp\left(\frac{8}{\delta}\right)\frac{r_0^2}{r_0^{ac\exp(-8/\delta)}}
        \end{aligned}
    \end{equation}
  where the penultimate line is obtained by bounding all the terms by the largest one for the first sum and computing the geometric sum. Since $\delta$ is fixed, by~\eqref{Hr3fixed} we know that $H_{B(r_3) \setminus B(r_0)}(v,w) \asymp \ln(r_2)$ (where the constants depend only on $\delta$) so we can choose $a = a(\delta)$ (for example $a = 2/(c\exp(-8/\delta))$), then $\fix{T}$ large enough compared to $a(\delta)$ (for example $\fix{T} = aC/(2\delta)$) and a new $\rho_0 = \rho_0(\delta)$ such that for $r \geq \rho_0(\delta)$, $v \in \partial B(r_1)$, $w \in \partial B(r_2)$,
  \begin{equation}
    \sum_{\fix{\gamma \in \Gamma_{B(r_3)\setminus B(r_0)}(v,w,Tr_0^2)}}\s(\gamma) \leq \delta H_{B(r_3) \setminus B(r_0)}(v,w).
  \end{equation}
  Now, for the paths \fix{$\gamma \in \Gamma_{ B(r_3) \setminus B(r_0)}(v, w) \setminus \Gamma_{ B(r_3) \setminus B(r_0)}(v, w, Tr_0^2)$}, the argument of the proof of Lemma~\ref{lemme3.1} works exactly the same way. Since $\K(x) = o\Big(\frac{1}{|x|^2}\Big)$, there exists $\rho_0$ such that for all $r_0 \geq \rho_0$, $v \in \partial B(r_1)$, $w \in \partial B(r_2)$, for all \fix{$\gamma \in \Gamma_{ B(r_3) \setminus B(r_0)}(v, w) \setminus \Gamma_{ B(r_3) \setminus B(r_0)}(v, w, Tr_0^2)$}, $\B(\gamma) \geq (1-\delta) \s(\gamma)$. Lemma \ref{2Dlemme3.1} follows: for $r_0 \geq \rho_0$ large enough and $r_1, r_2, r_3$ associated, for $v \in \partial B(r_1)$, $w \in \partial B(r_2)$,
    \begin{equation}
        \begin{aligned}
            H_{B(r_3) \setminus B(r_0)}(v,w) 
            &\overset{\eqref{eq:A:large}}{\leq} (1+\delta)\sum_{\fix{\gamma \in \Gamma_{ B(r_3) \setminus B(r_0)}(v, w) \setminus \Gamma_{ B(r_3) \setminus B(r_0)}(v, w, Tr_0^2)}}\s(\gamma)\\
            &\leq \frac{1+\delta}{1-\delta}\sum_{\fix{\gamma \in \Gamma_{ B(r_3) \setminus B(r_0)}(v, w) \setminus \Gamma_{ B(r_3) \setminus B(r_0)}(v, w, Tr_0^2)}}\B(\gamma)
            \leq \frac{1+\delta}{1-\delta} H^{\K}_{B(r_3) \setminus B(r_0)}(v,w).
    \end{aligned}
  \end{equation}
  Besides, $H^{\K}_{B(r_3) \setminus B(r_0)}(v,w) \leq H_{B(r_3) \setminus B(r_0)}(v,w)$ is obvious since $\B(\gamma) \leq \s(\gamma)$ for all path\fix{s}. Lemma~\ref{2Dlemme3.1} follows by using this last equation and~\eqref{Hr3fixed}.
\end{proof}

From this point, the exact same proof as for the case $d \geq 3$ works (the last paragraph of Section~\ref{proof3d}, replacing~\eqref{decomposition} by~\eqref{decomposition2d} and Lemma~\ref{lemme3.1} by Lemma~\ref{2Dlemme3.1}. This concludes the proof of Theorem~\ref{thm.cv} in the case $d=2$.

\bigskip

\paragraph{Declaration.}

\emph{Data availability:} we do not analyse or generate any datasets, because our work proceeds within a theoretical and mathematical approach.

\emph{Conflict of interest:} the authors have no relevant financial or non-financial interests to disclose.

\end{document}